\numberwithin{equation}{section}
\newtheorem{theorem}{Theorem}[section]
\newtheorem{lemma}[theorem]{Lemma}
\newtheorem{statement}[theorem]{Statement}
\theoremstyle{remark}
\newtheorem*{remark}{Remark}
\newcommand{\R}{\mathbb{R}}
\newcommand{\C}{\mathbb{C}}
\newcommand{\T}{\mathbb{T}}
\newcommand{\Cond}{{\bf (C0)}}
\newcommand{\X}{{\bf X}}
\newcommand{\A}{{\bf A}}
\newcommand{\D}{{\bf D}}
\newcommand{\M}{{\bf M}}
\newcommand{\EE}{{\bf E}}
\newcommand{\HH}{{\bf H}}
\newcommand{\OO}{{\bf O}}
\newcommand{\I}{{\bf I}}
\newcommand{\LL}{{\bf L}}
\newcommand{\V}{{\bf V}}
\newcommand{\U}{{\bf U}}
\newcommand{\J}{{\bf J}}
\newcommand{\W}{{\bf W}}
\newcommand{\Lam}{{\bf \Lambda}}
\newcommand{\Z}{{\bf Z}}
\newcommand{\Y}{{\bf Y}}
\newcommand{\ee}{{\bf e}}
\DeclareMathOperator{\Tr}{Tr}
\DeclareMathOperator{\E}{\mathbb{E}}
\DeclareMathOperator{\Var}{Var}
\DeclareMathOperator{\Pb}{\mathbb{P}}
\DeclareMathOperator{\re}{Re}
\DeclareMathOperator{\imag}{Im}
\begin{document}
\usetikzlibrary{calc,decorations.markings}

\vspace{1in}

\title[CLT for Products of Random Matrices]{\bf Distribution of Linear Statistics of Singular Values of the Product of  Random Matrices}

\author[F. G{\"o}tze]{F. G{\"o}tze}
\address{F. G{\"o}tze\\
 Faculty of Mathematics\\
 Bielefeld University \\
 Bielefeld, Germany
}
\email{goetze@math.uni-bielefeld.de}

\author[A. Naumov]{A. Naumov}
\address{A. Naumov\\
 Faculty of Computational Mathematics and Cybernetics\\
 Moscow State University \\
 Moscow, Russia
 }
\email{anaumov@cs.msu.su}

\author[A. Tikhomirov]{A. Tikhomirov}
\address{A. Tikhomirov\\
 Department of Mathematics\\
 Komi Research Center of Ural Division of RAS \\
 Syktyvkar, Russia
 }
\email{tichomir@math.uni-bielefeld.de}

\thanks{F. G{\"o}tze was supported by CRC 701 ``Spectral Structures and Topological
Methods in Mathematics'', Bielefeld University. A.Tikhomirov was supported by RFBR N~14-01-00500  and by Program of Fundamental Research Ural Division of RAS, Project 15-16-1-3. A.~Naumov was supported by RSCF~14-11-00196.}

\keywords{Random matrices, central limit theorem, Fuss-Catalan distributions}

\date{\today}

\begin{abstract}
In this paper we consider the product of two independent random matrices $\X^{(1)}$ and $\X^{(2)}$. Assume that $X_{jk}^{(q)}, 1 \le j,k \le n, q = 1, 2,$ are i.i.d. random variables with $\E X_{jk}^{(q)} = 0, \Var X_{jk}^{(q)} = 1$. Denote by $s_1, ... , s_n$ the singular values of $\W: = \frac{1}{n} \X^{(1)} \X^{(2)}$. We prove the central limit theorem for linear statistics of the squared singular values $s_1^2, ... , s_n^2$ showing  that the limiting variance depends on $\kappa_4: = \E (X_{11}^{1})^4 - 3$.

\end{abstract}

\maketitle

One of the main questions studied in Random Matrix Theory (RMT) is the asymptotic analysis of spectra of random matrices when the dimension goes to infinity.  For example it is well known since the pioneering work of Wigner~\cite{Wigner1958} that the empirical spectral distribution function weakly converges to the semicircle law. Another well known case is the sample covariance matrices $\W = \X \X^T$, where $\X$ is a matrix with independent entries, which was first studied in~\cite{MarchPastur1967} by Marchenko and Pastur. The distribution of singular values of products of random matrices with independent entries has been intensively studied, see for example~\cite{AlexGotzTikh2010b},~\cite{AlexGotzeTikh2011} and~\cite{AkemannIpsenKieb2013}.

All these results may be regarded as laws of large numbers for linear eigenvalue statistics. Thus fluctuations of such linear statistics of eigenvalues around its mean are of interest. There is a vast literature on this question. We mention the results of Jonsson~\cite{Jonsson1982}, Bai and Silverstain~\cite{BaiSilv2004} Sinai and Soshnikov~\cite{SinaSoshnikov1998}, Anderson and Zeitouni~\cite{AndZeitouni2006}, Lytova and Pastur~\cite{LytPastur2009}, Shcherbina~\cite{Shcherbina2011}, where the central limit theorem was proved.  The aim of this paper is to investigate the case of singular values of {\it products} of random matrices with independent entries. It will be shown that in this case the central limit theorem holds as well and the limiting variance will be explicitly determined.

\tableofcontents

\section{Introduction}

For any $m, n \ge 1$ we consider a family of independent real random variables $X_{j,k}^{(q)}$, $1 \le j,k \le n, q = 1, ... ,m$, defined on some probability space $(\Omega,\mathcal F,\Pb)$.
Assume that the following conditions \Cond~are fulfilled:\\
a) $X_{jk}^{(q)}$ are  identically distributed for $1 \le j, k \le n, q = 1, ... ,m$; \\
b) for any $1 \le j, k \le n$
$$
\E X_{j k}^{(q)} = 0 \text { and }  \E (X_{j k}^{(q)})^2 = 1;
$$
c) $\E (X_{jk}^{(q)})^4 = \mu_4 < \infty$.

The random variables $X^{(q)}_{jk}$ may depend on $n$, but for simplicity we shall not make this explicit in our  notations.

We introduce $m$ independent random matrices $\X^{(q)}, q = 1, ... , m$, as follows
$$
\X^{(q)} := \frac{1}{\sqrt n} [X_{jk}^{(q)}]_{j,k=1}^n.
$$
Denote by $s_1^2, ..., s_n^2$ the eigenvalues of the matrix $\W \W^T$, where $\mathbf W:= \prod_{q=1}^m\mathbf X^{(q)}$ and define the empirical spectral measure by
$$
F_n(x) = \frac{1}{n} \sum_{k = 1}^n \mathbb I(s_k^2 \le x).
$$
Here and in what follows $\mathbb I\{B\}$ denotes the indicator of the event $B$.

A fundamental problem in the theory of random matrices is to determine the limiting distribution of $F_n$ as the size of the random matrix tends to infinity.
It was shown by N.~Alexeev, F. G{\"o}tze and A.~Tikhomirov in~\cite{AlexGotzeTikh2011} that there exists a function $G_m(x)$ such that
\begin{equation}\label{eq: convergence_to_limit}
\lim_{n \rightarrow \infty} \sup_{x \in R} |\E F_n(x) - G_m(x)| = 0
\end{equation}
and $G_m(x)$ are defined by its moments $M_k, k \in \mathbb N$,
$$
M_k = \int_0^\infty x^k d G_m(x) = \frac{1}{m k + 1}\binom{k}{mk+k}
$$
which are so called Fuss-Catalan numbers. For $m= 1$ we get the well-known result of Marchenko-Pastur for sample covariance matrices.
The Fuss-Catalan numbers satisfy the following simple recurrence relation
$$
M_k = \sum_{k_0 + ... + k_m = k - 1} \prod_{\nu = 0}^m M_{k_\nu}.
$$
In~\cite{PensZycz2011} the density function $P_m(x)$ which satisfy
$$
\int_0^{K_m} x^k P_m(x) dx = M_k
$$
was found. Here $K_m: = (m+1)^{m+1}/m^m$. An explicit formula for $P_m(x)$ is given in Appendix~\ref{Fuss-Catalan Distribution}.

Denote by $s(z)$ the Stieltjes transform of the distribution $G_m(x)$
$$
s(z) = \int_{-\infty}^{\infty} \frac{1}{x - z}dG_m(x).
$$
It can be shown that $s(z)$ satisfy the following equation
$$
1 + z s(z) + (-1)^{m+1} z^m s(z)^{m+1} = 0.
$$
The result~\eqref{eq: convergence_to_limit} was proved under more general conditions then \Cond, it was assumed that the random variables may be non-identically distributed and satisfy a Lindeberg type condition
for the second moments, for details see~\cite{AlexGotzeTikh2011}. Under conditions \Cond~the result~\eqref{eq: convergence_to_limit} may be generalized and it can be shown
that $F_n$ weakly converges to $G_m$ in probability.  The latter may be rewritten in the following way
\begin{equation}\label{eq: LLN}
\int_{-\infty}^{\infty} f(\lambda) dF_n(\lambda) = \frac{1}{n}\sum_{k=1}^n f(s_k^2) \overset{p}{\longrightarrow}  \int_{-\infty}^{\infty} f(\lambda) dG_m(\lambda)
\end{equation}
which is valid for all continuous and bounded real functions $f(\lambda)$. We may interpret~\eqref{eq: LLN} as the law of large numbers. The natural question is to investigate a fluctuation of linear statistic
$$
S(\W): = \sum_{k=1}^n f(s_k^2)
$$
around its mean.

\subsection{Main result} Let $f(\lambda)$ be a smooth function with the Fourier transform given by
$$
\hat f(t) = \frac{1}{2\pi} \int_{-\infty}^\infty f(\lambda) e^{-it \lambda} d\lambda,
$$
We assume that $f(\lambda)$ satisfies the following condition
\begin{equation}\label{eq: condition of f}
\int_{-\infty}^\infty (1 + |t|^5) |\hat f(t)| dt < \infty.
\end{equation}
We will concentrate on the case of two random matrices, $m = 2$ and prove the following theorem which
is the main result of this paper
\begin{theorem} \label{th:main general case}
Let $m = 2$. Under conditions~\Cond and~\eqref{eq: condition of f} the statistic
$$
S^{(0)}: = S(\W) - \E S(\W)
$$
weakly converges to a Gaussian random variable $G$ with zero mean and variance given by
\begin{align}\label{eq: variance formula general case}
&\Var G = \frac{\kappa_4}{2} \left [ \int_{-a}^a f(\lambda^2) [p(\lambda) + \lambda p'(\lambda)] \, d\lambda \right ]^2 \\
&\qquad\qquad+\frac{1}{2\pi^2}  \int_{-a}^a \int_{-a}^a \frac{(f(\lambda^2) - f(\mu^2))^2}{(\lambda-\mu)^2} \nonumber\\
&\qquad\qquad\qquad\times \frac{[p(\lambda)-p'(\lambda)(\lambda-\mu)]}{3p(\mu)} \frac{[4p_1(\mu)^4 + 11 p_1(\mu)^2 + 4]}{4 p_1(\mu)^2 + 3}  \, d\lambda \, d\mu, \nonumber
\end{align}
where $\kappa_4 = \mu_4 - 3$, $p_1(\lambda) = \pi p(\lambda)$, $p(\lambda) = |\lambda| P_2(\lambda^2)$ is the symmetrized Fuss-Catalan density, and $a = \sqrt{K_2}$.
\end{theorem}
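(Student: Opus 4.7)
My plan is to reduce first to a CLT for linear statistics of a symmetric matrix by Hermitization. Form the $2n\times 2n$ block matrix
\[
\J = \begin{pmatrix} 0 & \W \\ \W^T & 0 \end{pmatrix},
\]
whose eigenvalues are $\pm s_1,\dots,\pm s_n$. Setting $g(\lambda):=f(\lambda^2)$, the centred statistic satisfies $S(\W)-\E S(\W) = \frac{1}{2}(\Tr g(\J)-\E\Tr g(\J))$, while the limiting spectral density of $\J$ is exactly the symmetrized Fuss-Catalan density $p(\lambda)=|\lambda|P_2(\lambda^2)$ on $[-a,a]$. Condition \eqref{eq: condition of f} guarantees that $\hat g$ has enough decay to justify the Fourier manipulations below.

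Next, I would compute the characteristic function $Z_n(x):=\E\exp(ixS^{(0)})$ via the Lytova-Pastur ODE scheme. Using Fourier inversion,
\[
S^{(0)} = \frac{1}{2}\int \hat g(t)\,Y_n(t)\,dt,\qquad Y_n(t):=\Tr e^{it\J}-\E\Tr e^{it\J},
\]
so that differentiating in $x$ gives
\[
Z_n'(x) = \frac{i}{2}\int \hat g(t)\,\E[Y_n(t)\,e^{ixS^{(0)}}]\,dt.
\]
The goal is to show $\E[Y_n(t)\,e^{ixS^{(0)}}] \approx ix\,\mathcal C_n(t)\,Z_n(x)$ for a deterministic quantity $\mathcal C_n(t)$ converging to $\frac{1}{2}\int \hat g(t')C(t,t')\,dt'$, where $C(t,t')$ is the limiting covariance of $Y_n(t)$ and $Y_n(t')$. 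This yields the ODE $Z_\infty'(x)=-x\sigma^2 Z_\infty(x)$ with
\[
\sigma^2 = \frac{1}{4}\iint \hat g(t_1)\hat g(t_2)\,C(t_1,t_2)\,dt_1 dt_2 = \Var G,
\]
and hence $Z_\infty(x)=e^{-x^2\sigma^2/2}$.

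To identify $C(t_1,t_2)$ I would apply a fourth-order cumulant expansion to each matrix entry $X_{jk}^{(q)}$ appearing in the traces. The second-cumulant contributions reorganize into a self-consistent linear system for the centred diagonal blocks of the resolvent $G(z):=(\J-zI)^{-1}$; linearizing around the deterministic solution encoded by the Stieltjes transform $s(z)$ satisfying $1+zs-z^2 s^3=0$ gives the universal part of $C$. The fourth cumulant $\kappa_4$ enters only through the $j=k$ diagonal terms and contributes a separate rank-one piece. After substituting the limiting covariance into $\sigma^2$, using the Fourier identity
\[
\iint \hat g(t_1)\hat g(t_2)\,\frac{(e^{it_1\lambda}-e^{it_1\mu})(e^{it_2\lambda}-e^{it_2\mu})}{(\lambda-\mu)^2}\,dt_1 dt_2 = \Bigl(\frac{g(\lambda)-g(\mu)}{\lambda-\mu}\Bigr)^{\!2},
\]
and passing to the boundary $\imag z\downarrow 0$, one recovers both summands in \eqref{eq: variance formula general case}.

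The principal technical obstacle is solving the linearized resolvent system explicitly. For $m=1$ (Marchenko-Pastur) this reduces to a scalar equation; for $m=2$ the block structure of $\J$ forces a genuinely coupled system, and after reduction to the real axis its inverse produces the peculiar rational factor $(4p_1(\mu)^4+11p_1(\mu)^2+4)/(4p_1(\mu)^2+3)$, while a boundary derivative of the Stieltjes transform gives the weight $p(\lambda)+\lambda p'(\lambda)$ accompanying $\kappa_4$. In parallel one must verify that cumulants of order $\ge 5$ are negligible (using \Cond{} together with the $(1+|t|^5)$-weight in \eqref{eq: condition of f}) and that all remainders in the self-consistent equation are $o(1)$ uniformly in $t$ on the support of $\hat g$, so that the ODE argument closes and yields the stated Gaussian limit.
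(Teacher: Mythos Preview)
Your outline shares the paper's backbone---Hermitization to the $2n\times 2n$ block matrix and the Lytova--Pastur ODE for $Z_n(x)$---but organizes the non-Gaussian analysis differently. The paper does \emph{not} run a single cumulant expansion for general $X_{jk}^{(q)}$. Instead it proves the CLT first in the pure Gaussian case (Section~\ref{section: gaussian case}), where exact Gaussian integration by parts and a Poincar\'e inequality give clean a~priori bounds on $\Var u_n(t)$; the integral equation for $Y_n(x,t)$ is then solved via the generalized Fourier transform, and the factor $(4p_1^4+11p_1^2+4)/(4p_1^2+3)$ comes out of inverting $(1/z-2s(z))/(1-3s^2(z))$ by a contour computation (Lemma~\ref{l: FT of K}). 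The general case (Section~\ref{section: general case}) is handled by the Bentkus--Tikhomirov interpolation $\Z^{(q)}(\varphi)=\X^{(q)}\sin\varphi+\Y^{(q)}\cos\varphi$: one differentiates $g_n(t,\varphi)=\E e^{itS^0(\varphi)}$ in $\varphi$, Taylor-expands to fourth order, and shows that only the $\kappa_4$ term survives, giving $g(t,\pi/2)=g(t,0)\exp(-t^2\kappa_4\Psi^2/4)$. Your direct cumulant-expansion route is a legitimate alternative (closer in spirit to how Lytova--Pastur treat non-Gaussian Wigner matrices), and it avoids the interpolation machinery; the paper's split, on the other hand, isolates the delicate resolvent algebra in a setting where integration by parts is exact and the variance bounds are free from the Gaussian Poincar\'e inequality.

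There are three places where your sketch is thinner than it needs to be. First, you jump from second to fourth cumulants without addressing $\kappa_3=\E(X_{11}^{(q)})^3$, which is not assumed to vanish; in the paper this is the term $T_2$ and requires a separate estimate to show it is $o(1)$. Second, under \Cond\ only the fourth moment is finite, so the remainder after a fourth-order expansion is not controllable without a truncation step $|X_{jk}^{(q)}|\le \tau_n\sqrt n$; the paper carries this out explicitly before the interpolation. Third, the ODE argument needs uniform a~priori bounds of the type $\Var u_n(t)\le Ct^2$ and equicontinuity of $Y_n(x,t)$; for Gaussian entries the paper gets these from the Poincar\'e inequality, but in the general case it uses martingale-difference decompositions (Lemmas~\ref{l: second trace variance} and~\ref{l: variance of trace general case}). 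Your proposal should say how these bounds are obtained in the non-Gaussian setting, since without them the subsequential-limit argument does not close.
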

\begin{remark}
Obviously the result of Theorem~\ref{th:main general case} depends
on the distribution of $X_{ij}^{(q)}, 1 \le j,k \le n, q = 1, 2$ in terms of the fourth cumulant
rather than the second moment only.
This means that the limiting behaviour is not universal in the usual sense, a fact which typical for the central limit theorems of linear eigenvalue statistics.
\end{remark}
\begin{remark}
The result of Theorem~\ref{th:main general case} may be extended on the case when $X_{ij}^{(q)}, 1 \le j,k \le n, q = 1, 2$
are non-identically distributed. Here one has to impose additional assumptions, for example Lindeberg's condition
on the tails of fourth moments of $X_{jk}^{(q)}$, see Section~\ref{section: general case} for details.
\end{remark}
\begin{remark}
The case $m > 2$ is much more difficult to analyse. One may derive a formula for $Y(x,t)$ (see the definition below).
But it is not yet clear whether this expression is positive, due to the fact that the formula
for $P_m(x), m > 3$ is rather complicated. We plan to study this case in a subsequent paper.
\end{remark}

\subsection{Structure of the paper}
We divide the proof of Theorem~\ref{th:main general case} into two parts. In the section~\ref{section: gaussian case} we consider the Gaussian case and derive an analogue of Theorem~\ref{th:main general case}. Our method will be based on the result of Lytova and Pastur~\cite{LytPastur2009} and Tikhomirov~\cite{Tikh1980},\cite{Tikh2001}.
In the section~\ref{section: general case} we investigate the difference between the general case and the Gaussian case. Here we will use the methods of Bentkus, see~\cite{Bentkus2003} and Tikhomirov, see~\cite{Tikh1980},\cite{Tikh2001}. All auxiliary facts about Fuss-Catalan distribution, unitary matrix decomposition and its derivatives are collected in Appendix~\ref{Fuss-Catalan Distribution}-\ref{ap: laplace transform}.

\subsection{Applications} One motivation for investigating the asymptotic distribution of products of random matrices follows from recent applications in wireless telecommunication, see~\cite{Muller2002}. Compare as well very recent results in~\cite{ZhengWeiSpeicherMullerHamCor2015}. Other immediate applications are in finance~\cite{Bouchaud2007} and quantum entanglement~\cite{Collins2010},~\cite{Zyczkowski2011}.

\subsection{History}
There are many papers on the CLT for linear eigenvalue statistics of random matrices. We mention the results of Jonsson~\cite{Jonsson1982}, Bai and Silverstain~\cite{BaiSilv2004} Sinai and Soshnikov~\cite{SinaSoshnikov1998}, Anderson and Zeitouni~\cite{AndZeitouni2006}, Lytova and Pastur~\cite{LytPastur2009}, Shcherbina~\cite{Shcherbina2011}.
In our setting the result for $m = 1$  was derived by Lytova and Pastur in~\cite{LytPastur2009}. We will use their ideas in the proof of Theorem~\ref{th:main general case}. One may also find a lot of information about the CLT for linear eigenvalues statistics in the book of Bai and Silverstein~\cite{BaiSilv2010}. We also believe that one may apply the result of~\cite{BaiSilv2004} together with~\cite{Zheng2012} to derive Theorem~\ref{th:main general case} 
 when restricting oneself  to the class of {\it analytic} test functions with a different  (implicit) representation for the variance
using Cauchy integrals.

The distribution of singular values of products of random matrices with independent entries has been intensively studied in many papers. The relevant literature  is much to extensive in order to describe it here in detail, see for example~\cite{AlexGotzTikh2010b},~\cite{AlexGotzeTikh2011},~\cite{PensZycz2011}, ~\cite{AkemannIpsenKieb2013},~\cite{GotzKostersTikh2014} and very recent result~\cite{Forrester2015}. The central limit theorem for product of complex Ginibre matrices was derived by Breuer and Duits in~\cite{BreuerDu2013}. It is known that in the complex Ginibre case the squares of singular values of $\W$ form a determinantal point process and the joint density function is a bi-orthogonal ensemble, see~\cite{AkemannKieburgWei2013}.

\subsection{Notations}
In what follows we will use the following notations. Denote by $||\A||, ||\A||_2$ the operator and Hilbert-Schmidt
norms of $\A$ respectively. As usual $\Tr \A = \sum_{i=1}^n \A_{ii}$. We assume that all random variables are defined on a common probability space $(\Omega, \mathcal F, \Pb)$. By $\Var(\xi)$ we mean $\E\xi^2 - (\E \xi)^2$, where $\E$ is the mathematical expectation with respect to $\Pb$. By $C$ and $c$ we denote some constants which
do not depend on $n$. We introduce the symmetrized version of $f$, i.e.
$$
\widetilde f(x) =
\begin{cases}
f(x)  &\mbox{if } x \geq 0, \\
f(-x) & \mbox{if } x < 0.
\end{cases}
$$
By $*$ we denote the convolution operation, i.e. $f*g(t) = \int_0^t f(s) g(t-s) \, ds$.

\section{The Gaussian case} \label{section: gaussian case}
In this section we consider the special case when $X_{jk}^{(q)}, 1 \le j,k \le n, q = 1, 2$ has the Gaussian distribution. We change our notations of matrices and denote by $\Y^{(q)}, q = 1,2$ the matrix $\X^{(q)}$ with $X_{jk}^{(q)}$ replaced by the Gaussian random variables. The main result of this section is the following theorem.
\begin{theorem} \label{th:main}
Let $\Y^{(q)} = n^{-1/2}\{Y_{jk}^{(q)}\}_{j,k=1}^n, q = 1,2,$ be independent random matrices such that the random variables $Y_{jk}^{(q)}, j,k=1,...,n, q = 1, 2,$
satisfy the conditions $\Cond$. Then the statistic
$$
S^{(0)}: = S(\W) - \E S(\W)
$$
weakly converges to the Gaussian random variable $G$ with zero mean and variance given by
\begin{align}\label{eq: variance formula gaussian case}
&\Var[G] = \frac{1}{2\pi^2}  \int_{-a}^a \int_{-a}^a \frac{(f(\lambda^2) - f(\mu^2))^2}{(\lambda-\mu)^2} \nonumber\\
&\qquad\qquad\qquad\times \frac{[p(\lambda)-p'(\lambda)(\lambda-\mu)]}{3p(\mu)} \frac{[4p_1(\mu)^4 + 11 p_1(\mu)^2 + 4]}{4 p_1(\mu)^2 + 3}  \, d\lambda \, d\mu,
\end{align}
where $p_1(\lambda) = \pi p(\lambda)$, $p(\lambda) = |\lambda| P_2(\lambda^2)$ is the symmetrized Fuss-Catalan density, and $a = \sqrt{K_2}$.
\end{theorem}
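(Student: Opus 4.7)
\textbf{Proof plan for Theorem~\ref{th:main}.}

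The plan is to follow the characteristic function / Fourier transform method of Lytova--Pastur~\cite{LytPastur2009} and Tikhomirov~\cite{Tikh1980,Tikh2001}, adapted to the product structure $\W = \Y^{(1)} \Y^{(2)}$. First I would use the condition~\eqref{eq: condition of f} to represent the test function as $f(\lambda) = \int \hat f(t) e^{it\lambda}\,dt$, so that, writing $U_n(t) := \exp(it\,\W\W^T)$ and $\varphi_n(t) := \Tr U_n(t)$,
\[
S^{(0)} = \int_{-\infty}^\infty \hat f(t)\bigl[\varphi_n(t) - \E\varphi_n(t)\bigr]\,dt.
\]
Introduce the characteristic function $Z_n(x) := \E e^{ix S^{(0)}}$. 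The goal is then to derive an asymptotic differential equation
\[
Z_n'(x) = -x\, C_n\, Z_n(x) + o(1),\qquad C_n \longrightarrow \Var G,
\]
which, combined with the initial condition $Z_n(0)=1$ and a tightness argument, yields $Z_n(x) \to \exp(-x^2 \Var G/2)$.

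The heart of the proof is the computation of $Z_n'(x) = i\int \hat f(t)\,\E\bigl[(\varphi_n(t)-\E\varphi_n(t)) e^{ix S^{(0)}}\bigr]\,dt$. The key tool is Gaussian integration by parts (Stein's identity) applied to each entry $Y^{(q)}_{jk}$, combined with the Duhamel formula for differentiating $U_n(t)$ with respect to an entry of $\Y^{(q)}$. Since the exponent involves the product $\Y^{(1)}\Y^{(2)}\Y^{(2)T}\Y^{(1)T}$, one derivative produces time-integrated expressions of the form $\int_0^t U_n(s)\, \partial_{jk}^{(q)}(\W\W^T)\, U_n(t-s)\,ds$ which must be handled on both slots $q=1,2$. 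Applying Stein's identity twice (once for the outer $(\varphi_n-\E\varphi_n)$ factor, once for the $e^{ix S^{(0)}}$ factor) yields a leading covariance-kernel term plus correction terms that I would show are $O(n^{-1})$ in Hilbert--Schmidt norm using the uniform operator-norm bounds $\|U_n(t)\| \le 1$ and the standard $\|\Y^{(q)}\| = O(1)$ a.s. up to small-probability events. This gives an identity of the form
\[
Z_n'(x) = -x\, Z_n(x) \iint \hat f(t_1)\hat f(t_2)\, Y_n(t_1,t_2)\,dt_1 dt_2 + o(1),
\]
where $Y_n(t_1,t_2)$ is an explicit functional of expectations of traces of products of $U_n(s)$ together with resolvent factors arising from inverting the linearized system for $m=2$.

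The next step is to identify $\lim_n Y_n(t_1,t_2) =: Y(t_1,t_2)$. Using the law of large numbers~\eqref{eq: LLN}, the limit $n^{-1}\E\varphi_n(t) \to \int e^{itx} P_2(x)\,dx$, together with Appendices~\ref{Fuss-Catalan Distribution}--\ref{ap: laplace transform}, the kernel $Y$ can be written in closed form in terms of the Laplace/Stieltjes transform of the Fuss--Catalan density $P_2$. Here the cubic equation $1 + zs(z) + z^2 s(z)^3 = 0$ is used to eliminate higher powers of $s$, and the algebraic factor $\frac{4p_1(\mu)^4 + 11 p_1(\mu)^2 + 4}{4 p_1(\mu)^2 + 3}$ should emerge directly from solving this cubic on the boundary $z=\lambda^2 + i0$ where $p_1 = \pi p$ is the imaginary part of the boundary trace of $s$. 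Finally, inverting the Fourier transform to return from $(t_1,t_2)$ to spectral variables $(\lambda,\mu)$ converts the double integral against $\hat f \hat f$ into the difference quotient $(f(\lambda^2)-f(\mu^2))^2/(\lambda-\mu)^2$ appearing in~\eqref{eq: variance formula gaussian case}.

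The main obstacle I anticipate is the linearization of the product $\W\W^T$ in the integration-by-parts calculation: unlike in Lytova--Pastur's single-matrix setting, each Stein differentiation produces terms involving both $\Y^{(1)}$ and $\Y^{(2)}$ as free factors, so the intermediate system of expected traces is larger and must be solved self-consistently. Extracting the rational expression in $p_1$ requires solving this self-consistent system in closed form, which is where the cubic equation for the Stieltjes transform of the Fuss--Catalan law is decisive (and also explains Remark 3 of the paper: for $m \ge 3$ the corresponding algebraic system is of degree $m+1$ and no such explicit kernel is available). A secondary technical point is the tightness of $(Z_n)$ needed to promote the ODE-method into weak convergence; this follows by standard variance estimates for $S^{(0)}$ combined with the assumed decay~\eqref{eq: condition of f} of $\hat f$.
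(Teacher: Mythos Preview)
Your high-level strategy --- Lytova--Pastur characteristic-function method, Duhamel's formula, Gaussian integration by parts, then identification of the limiting kernel via the Stieltjes transform of the Fuss--Catalan law --- is exactly the paper's approach. Two structural choices in the paper are missing from your plan, however, and the first one matters.

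\textbf{Symmetrization/linearization.} The paper does \emph{not} work with $U_n(t)=\exp(it\,\W\W^T)$. It first passes (Section~2.1) to the $2n\times 2n$ Hermitian block matrix $\hat\V=\HH^{(1)}\HH^{(2)}\J$ with eigenvalues $\pm s_1,\dots,\pm s_n$ and sets $\U(t)=e^{it\hat\V}$. The gain is that $\hat\V$ is bilinear in the entries (degree one in each $\Y^{(q)}$ block), so the Duhamel/IBP derivatives are rank-one perturbations with clean formulas (Lemmas~\ref{l: U derivative}--\ref{l: S derivative q geq 2}); your $\W\W^T$ is degree four, and the ``system of expected traces'' you anticipate would be correspondingly larger. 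This is also why the variance~\eqref{eq: variance formula gaussian case} is written in the symmetrized density $p(\lambda)=|\lambda|P_2(\lambda^2)$ on $[-a,a]$ and why the relevant algebraic relation is the symmetrized one $1+zs(z)=zs^3(z)$, not the cubic $1+zs(z)+z^2s^3(z)=0$ you quote.

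\textbf{How the rational factor appears.} The paper does not obtain a closed double integral for $Z_n'$ directly after ``Stein twice''. With $Y_n(x,t)=\E u_n^0(t)e^{ixS^0}$ and $u_n(t)=\tfrac12\Tr\U(t)$, the IBP calculation produces a Volterra-type convolution equation in $t$,
\[
Y_n(x,t)+3\int_0^t Y_n(x,s)\,v_n^{*2}(t-s)\,ds
= xZ_n(x)\int_0^t\bigl[2v_n(s)A_n(t-s)+A_n(s)\bigr]\,ds + r_n,
\]
which is solved in the limit by the generalized Fourier transform of Appendix~\ref{ap: laplace transform}: $F(z)=ixZ(x)\,(1/z-2s(z))R(z)/(1-3s^2(z))$. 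The factor $(4p_1^4+11p_1^2+4)/\bigl(3p_1(4p_1^2+3)\bigr)$ is the boundary value of $(1/z-2s)/(1-3s^2)$ computed by a contour argument (Lemma~\ref{l: FT of K}), not something read off the cubic for $s$ on the slit. Your plan jumps from IBP to a finished covariance kernel $Y(t_1,t_2)$; the missing mechanism is precisely this convolution equation in a single time variable and its Laplace inversion.
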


\subsection{Symmetrization}

Before we start to prove Theorem~\ref{th:main} we will introduce and prove a simple Lemma.
Let $\xi^2$ be a positive random variable with the distribution function $F(x)$.
Define $\widetilde \xi:=\varepsilon\xi$, where $\varepsilon$ denotes a Rademacher random variable with $\Pb\{\varepsilon=\pm1\}=1/2$ which is independent of $\xi$.
Let $\widetilde F(x)$ denote the distribution function of $\widetilde \xi$. It satisfies the following equation
\begin{equation}\label{sym}
\widetilde F(x)=1/2(1+\text{sgn} \{x\}\,F(x^2)),
\end{equation}
\begin{lemma}\label{symmetrization}
For any one-sided distribution function $F(x)$ and $G(x)$ we have
\begin{equation*}
\sup_{x\ge 0}| F(x)-G(x)|=2\sup_x|\widetilde F(x)-\widetilde G(x)|,
\end{equation*}
where $\widetilde F(x)$ ($\widetilde G(x)$) denotes the symmetrization of $F(x)$ ($G(x)$ respectively) according to (\ref{sym}).
\end{lemma}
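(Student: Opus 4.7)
The argument is a direct algebraic manipulation using the defining formula \eqref{sym}. First I would write out $\widetilde F$ and $\widetilde G$ explicitly and subtract: since the constant $1/2$ cancels and the factor $\sgn(x)$ is common,
\begin{equation*}
\widetilde F(x) - \widetilde G(x) \;=\; \tfrac{1}{2}\,\sgn(x)\,\bigl[F(x^2) - G(x^2)\bigr]
\end{equation*}
for every $x\neq 0$, while $\widetilde F(0)-\widetilde G(0)=0$ regardless. Taking absolute values gives $|\widetilde F(x)-\widetilde G(x)| = \tfrac{1}{2}\,|F(x^2)-G(x^2)|$ for all $x\in\R$.

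Next I would take the supremum over $x\in\R$. Because the map $x\mapsto x^2$ is surjective from $\R$ onto $[0,\infty)$, the substitution $y=x^2$ gives
\begin{equation*}
\sup_{x\in\R}\bigl|\widetilde F(x)-\widetilde G(x)\bigr| \;=\; \tfrac{1}{2}\sup_{y\ge 0}\bigl|F(y)-G(y)\bigr|.
\end{equation*}
Multiplying by $2$ yields the claim. Since $F$ and $G$ are one-sided (vanish on $(-\infty,0)$), no contribution from $x<0$ is lost in the last step.

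There is no serious obstacle here; the lemma is essentially the statement that the symmetrization map is a bijection (up to the factor $1/2$) between one-sided distribution functions and symmetric ones that preserves the Kolmogorov distance up to a factor of $2$. The only point requiring a line of care is the behavior at $x=0$, where one uses $\sgn(0)=0$ so that the displayed identity continues to hold trivially.
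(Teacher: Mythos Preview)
Your proposal is correct and follows essentially the same approach as the paper: both are direct computations from the defining formula~\eqref{sym}, the paper inverting it to write $F(x)=2\widetilde F(\sqrt{x})-1$ and subtracting, while you subtract the symmetrizations directly and use $|\sgn(x)|=1$. The arguments are interchangeable one-line calculations.
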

\begin{proof}By (\ref{sym}), we have for any $x\ge 0$
\begin{align*}
F(x)=2\widetilde F(\sqrt x)-1\\
G(x)=2\widetilde G(\sqrt x)-1.
\end{align*}
This implies
\begin{equation*}
\sup_{x\ge 0}| F(x)-G(x)|=2\sup_{x\ge 0}|\widetilde F(\sqrt x)-
\widetilde G(\sqrt x)|=2\sup_{x}|\widetilde F( x)-
\widetilde G( x)|.
\end{equation*}
Thus Lemma is proved.
\end{proof}
We apply this Lemma to the distribution of the  squared singular values of the matrix $\mathbf W$. Let us denote
\begin{align} \label{matrix H}
\HH^{(\nu)}=\left(\begin{matrix}&\Y^{(\nu)}& \OO\\
&\OO & {\Y^{(m-\nu+1)}}^T\end{matrix}\right) \text{ and }
\J:=\left(\begin{matrix}&\mathbf O  &  \I \\& \I &\mathbf O\end{matrix}\right).
\end{align}
For any $1\le a, b\le m$, put
\begin{equation}\label{matrix V}
 \V_{[a,b]}=
\begin{cases}
\prod_{k=a}^{b}\HH^{(k)},\quad\text{for }a\le b,\\
\I\quad\text{otherwise},
\end{cases}
\end{equation}
and $\V = \V_{[1,m]}, \hat \V = \V \J$.

Note that  $\hat \V$ is a symmetric matrix. The eigenvalues of the matrix $\hat \V$ are $-s_1,\ldots,-s_n,s_n,\ldots,s_1$. Note that the symmetrization of the distribution function $F_n(x)$
is a function $\widetilde{F}_n(x)$ which is the empirical distribution function  of the  eigenvalues of the matrix $\hat \V$. According to Lemma~\ref{symmetrization} we get
\begin{equation*}
\sup_x| \E F_n(x)-G_m(x)|= 2\sup_x|\E \widetilde F_n(x)-\widetilde G_m(x)|,
\end{equation*}
and~\eqref{eq: LLN} may be rewritten as follows
\begin{equation*}
\int_{-\infty}^{\infty}  \widetilde f(x) d \widetilde F_n(x) = \frac{1}{2n}\sum_{k=1}^n [f(s_k) + f(-s_k)] \overset{p}{\longrightarrow} \int_{-\infty}^{\infty}  \widetilde f(x) d \widetilde G_m(x).
\end{equation*}

It is straightforward to check that the Stieltjes of $ \widetilde G(x)$ satisfies the following equation
\begin{equation}\label{eq: St transfrom of G_m}
1+zs(z)+(-1)^{m+1}z^{m-1}s^{m+1}(z) = 0.
\end{equation}

To finish our linearization we mention that
$$
\int_{0}^\infty f(x) d F_n(x) = \int_{-\infty}^{\infty}  \widetilde f(x^2) d \widetilde F_n(x).
$$
This means that we may substitute $f(x)$ by $f(x^2)$ and consider its symmetrization $\widetilde f(x)$.
In what follows we will consider symmetrized distribution functions only and omit the symbol $"\,\widetilde {\cdot}\,"$ in the corresponding notation.
In the new notations we will have
$$
S^0 = \frac{1}{2} [\Tr  f(\hat \V) - \E \Tr  f(\hat \V)].
$$

\subsection{Empirical Poincar\'e Inequalities}
Following~\cite{BobGotzTikh2010} we say that a probability measure $\mu$ on $\R^d$ satisfies a Poincar\'e-type inequality with constant $\sigma^2$
if for any bounded smooth function $g$ on $\R^d$ with gradient $\bigtriangledown g$,
\begin{equation}\label{eq: PI}
\Var(g) \le \sigma^2 \int |\bigtriangledown g|^2 d \mu,
\end{equation}
where $\Var(g) = \int g^2 d\mu - (\int g d\mu)^2$. In this case we write $PI(\sigma^2)$ for short.

Assume that the random variables $X_1, ... , X_n$ have a joint distribution $\mu$ on $\R^n$, satisfying the Poicare-type inequality~\eqref{eq: PI}. Given a bounded smooth complex-valued function $f$ on the real line,
one may apply~\eqref{eq: PI} to
$$
g(x_1, ... , x_n) = \frac{f(x_1) + ... + f(x_n)}{n} = \int f(x) dF_n(x),
$$
where $F_n$ in the empirical measure, defined for observation $X_1 = x_1, ... , X_n = x_n$. Since
$$
|\bigtriangledown g|^2 = \frac{|f'(x_1)|^2 + ... + |f'(x_n)|^2}{n^2} = \frac{1}{n}\int |f'(x)|^2 dF_n(x),
$$
we obtain the following statement, see~\cite{BobGotzTikh2010}[Proposition~4.3],
\begin{statement}\label{stat: PI}
Under $PI(\sigma^2)$, for any smooth $F$-integrable function $f: \R \rightarrow \C$,
$$
\E \left | \int f(x) d F_n(x) - \int f(x) d F(x) \right |^2 \le \frac{\sigma^2}{n} \int |f'(x)|^2 d F(x),
$$
where $F(x): = \E F_n(x)$.
\end{statement}
We will use the following linearization trick from~\cite{Burda2011}. Let us consider the matrix $\hat \V = [\prod_{j=1}^{m-1} \HH^{(j)} ] \HH^{(m)} \J$. We form the following $mn\times mn$  matrix
\begin{align*}
\M =\left [
\begin{matrix}
\OO & \HH^{(1)} & \OO & \OO & ... & \OO \\
\OO & \OO & \HH^{(2)} & \OO & ... & \OO \\
    &     &          &     & ... &     \\
\OO & \OO & \OO & \OO & ... & \HH^{(m-1)}\\
\HH^{(m)}\J &\OO & \OO & \OO & ... & \OO
\end{matrix} \right ]
\end{align*}
Then the $m$-th power of $\M$ is a diagonal block matrix, there the first block is equal to $\hat \V$, the second -- $\HH^{(2)} \HH^{(3)} ... \HH^{(m)}\J \HH^{(1)}$ and so on.
The eigenvalues of $\M^m$ are the eigenvalues of $\hat \V$ with multiplicity $m$. We denote the eigenvalues of $\M$ by $\lambda_1, ... , \lambda_{mn}$ and their empirical distribution function by
$G_n(\lambda)$. Then we have for an even function $f$
$$
\int f(x) d F_n(x) = \frac{1}{n}\sum_{j=1}^{n} f(s_j) = \frac{1}{2nm} \sum_{j=1}^{2mn} f(\lambda_i^m) = \int f(\lambda^m) d G_n(\lambda).
$$
Without loss of generality we assume that $\lambda_1, ... , \lambda_n$ are real positive eigenvalues $s_1^{1/m}, ... , s_n^{1/m}$.
All other eigenvalues may be derived by a rotation on an angle $\theta_k = \frac{k \pi}{m}, k = 1 , ... , 2m-1$. Let $\theta_0 = 0$. We denote the empirical spectral distribution of
$e^{i\theta_k}\lambda_1, ... , e^{i\theta_k}\lambda_n$ by $G_{n, k}$. It is easy to see that
\begin{equation}\label{eq: from singlar values to eigenvalues of M}
\int f(\lambda^m) d G_n(\lambda) = \frac{1}{2m}\sum_{k=0}^{2m-1} \int_{\T_k} f(\lambda^m) d G_{n,k}(\lambda),
\end{equation}
where $\T_k = e^{i \theta_k} \R$.

The joint distribution $P$ of the collection $\{Y_{jk}^{(q)} ,j,k=1,..., n, q = 1, ... , m\}$ represents a product probability measure
on the Euclidian space $\R^N$ of dimension $N = m n^2$,
while the joint distribution $\mu$ of the spectral values $\lambda_1, ..., \lambda_n$ is a probability measure
on $\R^{n}$, obtained from $P$ as the image under the map $T = P \cdot S$, where $S$ is the map from matrices to their eigenvalues and $P$ is the projector on the subspace of the dimension $n$.
We will apply the following Lemma (see~\cite{BobGotzTikh2010}[Lemma~7.1]
\begin{lemma}
Let $\mu_1, ... , \mu_N$ be probability measures on $\R$, satisfying $PI(\sigma^2)$.
The image of the product measure $P = \mu_1 \otimes  ... \otimes  \mu_N$ under any Lipshitz map $T: \R^N \rightarrow \R^n$ satisfies
$PI(\sigma^2 ||T||_{Lip}^2)$, where
$$
||g||_{Lip}: = \sup_{x \neq y} \frac{\rho_2(g(x), g(y))}{\rho_1(x,y)}
$$
and $\rho_1, \rho_2$ are metrics in $\R^N$ and $\R^n$ respectively.
\end{lemma}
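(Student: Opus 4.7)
The plan is to decouple the statement into two classical ingredients whose combination produces exactly $PI(\sigma^2 ||T||_{Lip}^2)$ for the image measure. First, a tensorization step showing that the product $P = \mu_1 \otimes \cdots \otimes \mu_N$ itself satisfies $PI(\sigma^2)$ on $\R^N$. Second, a pushforward step showing that if any probability measure on $\R^N$ satisfies $PI(\sigma^2)$, then its image under a Lipschitz map $T : \R^N \to \R^n$ satisfies $PI(\sigma^2 ||T||_{Lip}^2)$ on $\R^n$. Together they give the statement.

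For the tensorization step I would use the martingale decomposition with respect to the filtration $\mathcal F_i = \sigma(X_1,\ldots,X_i)$, where the coordinates $X_1,\ldots,X_N$ are independent with laws $\mu_1,\ldots,\mu_N$. Writing $g - \E g = \sum_{i=1}^N V_i$ with the orthogonal martingale differences $V_i = \E[g \mid \mathcal F_i] - \E[g \mid \mathcal F_{i-1}]$ yields $\Var_P(g) = \sum_{i=1}^N \E V_i^2$. By the product structure, $\E[V_i^2 \mid \mathcal F_{i-1}]$ equals the variance under $\mu_i$ of the function $h_i(X_1,\ldots,X_{i-1},\,\cdot\,)$ obtained from $g$ by fixing the first $i-1$ coordinates and integrating out $x_{i+1},\ldots,x_N$. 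The one-dimensional $PI(\sigma^2)$ applied to $h_i$, combined with Jensen's inequality to commute the $i$-th partial derivative with the remaining averaging, gives $\E V_i^2 \le \sigma^2 \int |\partial_i g|^2 \, dP$. Summation over $i$ produces the tensorized inequality $\Var_P(g) \le \sigma^2 \int |\nabla g|^2 \, dP$.

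For the pushforward step, set $\nu = T_* P$. Given a smooth bounded $g : \R^n \to \R$, let $h = g \circ T$, so that $\Var_\nu(g) = \Var_P(h)$. Since $T$ is Lipschitz and $g$ smooth, $h$ is Lipschitz, and by Rademacher's theorem its classical gradient exists $P$-almost everywhere and satisfies $|\nabla h(x)| \le ||T||_{Lip}\,|\nabla g(T(x))|$. Applying the tensorized $PI(\sigma^2)$ for $P$ to $h$ then gives
$$\Var_\nu(g) = \Var_P(h) \le \sigma^2 \int |\nabla h|^2 \, dP \le \sigma^2 ||T||_{Lip}^2 \int |\nabla g|^2 \, d\nu,$$
which is the desired inequality. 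The only genuine subtlety is that $PI(\sigma^2)$ is formulated for smooth test functions whereas $h = g\circ T$ is merely Lipschitz; this is the main (though entirely standard) obstacle, and I would dispatch it by a routine mollification argument—convolving $h$ with a smooth bump, applying the smooth inequality, and passing to the limit using that mollification does not increase Lipschitz norms—or simply cite the extension to Lipschitz test functions rather than reproduce it.
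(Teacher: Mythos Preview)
Your argument is correct and is exactly the standard route: tensorize the one–dimensional Poincar\'e inequalities to get $PI(\sigma^2)$ for the product measure, then push forward under $T$ using $|\nabla(g\circ T)|\le \|T\|_{\mathrm{Lip}}\,|\nabla g|\circ T$, with the Lipschitz-vs-smooth issue handled by mollification. There is nothing to compare against here, since the paper does not supply its own proof of this lemma but simply imports it from~\cite{BobGotzTikh2010} (Lemma~7.1 there); your write-up is precisely the kind of proof one finds in that reference.
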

In our case
$$
\sum_{j = 1}^n |\lambda_j - \lambda_j'|^2 \le ||\M - \M'||_2^2 = \frac{2}{n}\sum_{q=1}^m \sum_{j,k=1}^{n} |Y_{jk}^{(q)} - (Y_{jk}^{(q)})'|^2
$$
and $||T||_{Lip} = \frac{\sqrt 2}{\sqrt n}$. Since $Y_{jk}^{(q)}$ satisfies $PI(\sigma^2)$ it follows from~\eqref{eq: from singlar values to eigenvalues of M} and Statement~~\eqref{stat: PI} that
\begin{equation}\label{eq:variance EPI}
\E \left | \int f(x) d F_n(x) - \int f(x) d F(x) \right |^2 \le \frac{\sigma^2 m^2}{n^2} \int |x|^{\frac{2m-2}{m}}|f'(x)|^2 \, d F(x).
\end{equation}

\subsection{Proof of CLT in the Gaussian case}
In this subsection we give the proof of Theorem~\ref{th:main}.

\begin{proof}[Proof of Theorem~\ref{th:main}]
Let us denote the characteristic function of $S^0$ by $Z_n(x)$, i.e.
$$
Z_n(x) := \E e^{i x S^{0}}.
$$
To prove Theorem~\ref{th:main} it is sufficient to derive that
$$
\lim_{n \rightarrow \infty} Z_n(x) = Z(x),
$$
where $Z(x)$ is a characteristic function of the Gaussian random variable $G$ with zero mean and variance given by the formula~\eqref{eq: variance formula gaussian case}, i.e.,
$$
Z(x) = \E e^{it G} = e^{-\Var[G] t^2/2}.
$$
One has to show that $Z(x) = 1 - \Var[G] \int_0^x y Z(y) \, dy$.  Since
$$
Z_n(x) = 1 + \int_0^x Z'(y) \, dy,
$$
similarly to Lytova and Pastur(\cite{LytPastur2009}) it is sufficient to prove that any converging subsequences $\{Z_{n_l}\}$ and $\{Z_{n_l}'\}$ satisfy
\begin{equation}\label{eq: we have to show}
\lim_{n_l \rightarrow \infty} Z_{n_l}(x) = Z(x), \quad \lim_{n_l \rightarrow \infty} Z_{n_l}'(x) = - x Z(x) \Var[G].
\end{equation}
and show that $\Var[G]$ is given by the formula~\eqref{eq: variance formula gaussian case}.

Taking derivative of $Z_n(x)$ we get
$$
Z_n'(x) = i \E S^0 e^{ixS^0} = \frac{i}{2} \int_{-\infty}^\infty \hat f(t) \E [\Tr \U(t) - \E \Tr \U(t)] e^{i x S^0} \, dt,
$$
where we have applied the Fourier inverse formula
$$
f(\lambda) = \int_{-\infty}^{\infty} \hat f(t) e^{it\lambda} \, dt.
$$
We introduce further notations
\begin{align*}
&u_n(t) : = \frac{1}{2} \Tr \U(t), \\
&u_n^0(t) : = u_n(t) - \E u_n(t), \\
&Y_n(x, t) : = \E u_n^0(t) e^{i x S^0}.
\end{align*}
In these notations we may rewrite $Z_n'(x)$ as follows
\begin{equation}\label{eq: Z_n derivative equation}
Z_n'(x) = i  \int_{-\infty}^\infty \hat f(t) Y_n(x,t) \, dt.
\end{equation}
From unitary matrix representation~\eqref{eq: U matrix representation} it follows that
$$
u_n(t) = \sum_{j=1}^n U_{jj}(t) = \sum_{j=1}^n U_{j+n, j+n}(t).
$$
The following Lemma gives the estimates for the variance of $u_n(t)$, its derivative $u_n'(t)$
with respect to the argument $t$, and $Y_n(x,t)$.
\begin{lemma}\label{l: trace variance}
Under condition of Theorem~\ref{th:main} we have
\begin{align*}
\Var(u_n(t)) \le C_1 t^2, \quad \Var(u_n'(t)) \le C_2(1+ t^2), \quad |Y_n(x,t)| \le \sqrt{C_1} t.
\end{align*}
\end{lemma}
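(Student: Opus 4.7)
The plan is to recognize $u_n(t)$ and $u_n'(t)$ as linear statistics of the (symmetrized) singular values and to invoke the Empirical Poincar\'e Inequality~\eqref{eq:variance EPI} with $m=2$. Since the matrix entries are Gaussian, their joint law satisfies $PI(\sigma^2)$ for a universal $\sigma^2$, so~\eqref{eq:variance EPI} applies. Multiplying the bound by $n^2$, it yields, for any smooth $g:\R\to\C$,
$$
\Var\!\Bigl(\sum_{k=1}^n g(s_k)\Bigr) \le 4\sigma^2\int|x|\,|g'(x)|^2\,dF(x),
$$
where $F=\E F_n$ is the mean symmetrized empirical measure; the weight $|x|^{(2m-2)/m}$ reduces to $|x|$ when $m=2$.

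For the first estimate, the representation $u_n(t)=\tfrac12\Tr\U(t)=\sum_{k=1}^n\cos(ts_k)$ lets us take $g(x)=\cos(tx)$, for which $|g'(x)|^2=t^2\sin^2(tx)\le t^2$. Substituting gives
$$
\Var(u_n(t))\le 4\sigma^2 t^2 \int|x|\,dF(x)\le C_1 t^2,
$$
where the finiteness of $\int|x|\,dF(x)$ uniformly in $n$ follows from a standard uniform-in-$n$ bound on $\tfrac1n\E\Tr(\W\W^T)^{1/2}$ (or, by Cauchy--Schwarz, on $\tfrac1n\E\Tr(\W\W^T)$), which is a routine Fuss--Catalan-type moment computation for products of Gaussian matrices.

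For the second estimate, $u_n'(t)=-\sum_{k=1}^n s_k\sin(ts_k)$, and the same inequality with $g(x)=-x\sin(tx)$ and $|g'(x)|^2\le 2+2t^2 x^2$ produces
$$
\Var(u_n'(t)) \le 8\sigma^2\int|x|\bigl(1+t^2 x^2\bigr)\,dF(x)\le C_2(1+t^2),
$$
provided $\int|x|^3\,dF(x)$ is bounded uniformly in $n$, which is obtained by the same moment argument. Finally, the third estimate is an immediate application of the Cauchy--Schwarz inequality: since $|e^{ixS^0}|=1$,
$$
|Y_n(x,t)|=\bigl|\E\,u_n^0(t)\,e^{ixS^0}\bigr|\le\sqrt{\E|u_n^0(t)|^2}=\sqrt{\Var(u_n(t))}\le \sqrt{C_1}\,|t|.
$$

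The only point requiring care is the uniform-in-$n$ boundedness of the moments $\int|x|^k\,dF(x)=\tfrac1n\sum_k\E|s_k|^k$ for $k=1,3$; I would handle this either by direct evaluation of $\E\Tr(\W\W^T)^j$ for small integer $j$ in the Gaussian case (which gives the exact finite-$n$ values of these moments), or by combining the weak convergence $F_n\to G_2$ with a tightness-of-moments argument based on compactness of the limiting support $[-a,a]$. Modulo this elementary ingredient, the entire lemma reduces to two applications of~\eqref{eq:variance EPI} and one use of Cauchy--Schwarz, so no genuine obstacle is expected.
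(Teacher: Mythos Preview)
Your proof is correct and follows the same approach as the paper: apply the empirical Poincar\'e inequality~\eqref{eq:variance EPI} with $f(x)=\cos(tx)$ and $f(x)=-x\sin(tx)$, then use Cauchy--Schwarz for $Y_n(x,t)$. You have in fact been more careful than the paper about the uniform-in-$n$ boundedness of the moments $\int|x|\,dF(x)$ and $\int|x|^3\,dF(x)$, a detail the paper leaves implicit.
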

\begin{proof}
The statement of this Lemma for $u_n(t)$ and $u_n'(t)$ follows from~\eqref{eq:variance EPI} applied to $f(x) = \cos(tx)$ and
$f(x) = -x \sin(tx)$ respectively.
From the Cauchy-Schwarz inequality we conclude that
$$
|Y_n(x,t)| = |\E((u_n(t) - \E u_n(t)) e^{ixS^0})| \le \Var^{1/2}(u_n(t)) \le \sqrt{C_1} t.
$$
\end{proof}
From Lemma~\ref{l: trace variance} we may conclude that
$$
\left|\frac{\partial Y_n(x,t)}{\partial t}\right| \le \Var^{1/2}(u_n'(t)) \le C_2^{1/2} \sqrt{1+t^2}
$$
and
$$
\left|\frac{\partial Y_n(x,t)}{\partial x}\right| \le \Var^{1/2}(u_n(t))\Var^{1/2}(S_0) \le C_1^{1/2} t \sup_{\lambda \in \R} f'(\lambda)
$$
One may see that $Y_n(x,t)$ is bounded and equicontinues on any finite set of $\R^2$. Similarly to Lytova and Pastur it is sufficient to show that any uniformly converging subsequence of $\{Y_n\}$ has the same limit $Y$, leading to~\eqref{eq: we have to show}.

\subsection{Product of two random square matrices}
Let $m = 2$. We investigate the quantity $Y_n(x,t)$. Applying Duhamel's formula
$$
\U(t) = \I + i \int_0^t \hat \V \U(s) \, ds
$$
we will have
$$
Y_n(x,t) = \frac{i}{2} \int_0^t \E [ \Tr \hat \V \U(s) - \E \Tr \hat \V \U(s) ] e^{i x S^0} ds = \frac{1}{2} \mathcal A_1 + \frac{1}{2} \mathcal A_2,
$$
where
$$
\mathcal A_1 = \frac{i}{\sqrt n} \int_0^t \sum_{j, k = 1}^n \E  [ Y_{jk}^{(1)} [\HH^{(2)} \J \U(s)]_{k, j} - \E Y_{jk}^{(1)} [\HH^{(2)} \J \U(s)]_{k, j} ] e^{i x S^0} \, ds
$$
and
\begin{align*}
\mathcal A_2 = \frac{i}{\sqrt n} \int_0^t \sum_{j, k = 1}^n &\E [ Y_{jk}^{(2)} [\HH^{(2)} \J \U(s)]_{j+n, k+n } \\
&- \E  Y_{jk}^{(2)} [\HH^{(2)} \J \U(s)]_{j+n, k+n } ]e^{i x S^0}  \, ds.
\end{align*}
Let us consider the term $\mathcal A_1$. Applying $\E \xi f(\xi) = \E \xi^2 \E f'(\xi)$ valid for Gaussian random variable $\xi$ with zero mean, we get
\begin{equation}\label{eq: mathcal A representation}
\mathcal A_1 = I_1 + I_2,
\end{equation}
where
$$
I_1 = \frac{i}{\sqrt n} \int_0^t \sum_{j, k = 1}^n \E  \left [\left [\HH^{(2)} \J \frac{\partial\U(s)}{\partial Y_{jk}^{(1)}} \right ]_{k, j}
-\E \left [\HH^{(2)} \J \frac{\partial\U(s)}{\partial Y_{jk}^{(1)}} \right]_{k, j} \right ] e^{i x S^0} \, ds
$$
and
$$
I_2 = -\frac{x}{\sqrt n} \int_0^t \sum_{j, k = 1}^n \E [\HH^{(2)} \J \U(s)]_{k, j } \frac{\partial S}{\partial Y_{jk}^{(1)} } e^{i x S^0} \, ds.
$$
From Lemma~\ref{l: U derivative} it follows that
\begin{align*}
&\sum_{j,k=1}^n \left [\HH^{(2)} \J \frac{\partial\U(s)}{\partial Y_{jk}^{(1)}} \right ]_{k, j}  \\
&\qquad\qquad = \frac{i}{\sqrt n} \sum_{j,k = 1}^n \sum_{l=1}^{2n}  [\HH^{(2)} \J]_{k,l}  [\U \HH^{(1)} ]_{l,k+n}*[\U]_{j, j} (s) \\
&\qquad\qquad +\frac{i}{\sqrt n} \sum_{j,k = 1}^n \sum_{l=1}^{2n}  [\HH^{(2)} \J]_{k,l} [\U \HH^{(1)} ]_{j,k+n}*[\U]_{l, j} (s) \\
&\qquad\qquad= \frac{i}{\sqrt n} \int_0^s u_n(s - s_1) \sum_{k = 1}^n [\HH^{(2)} \J \U(s_1) \HH^{(1)} ]_{k, k+n}  \, ds_1 \\
&\qquad\qquad+\frac{i}{\sqrt n} \int_0^s \sum_{j, k = 1}^n [\HH^{(2)} \J \U(s_1) ]_{k, j} [\U(s - s_1) \HH^{(1)} ]_{j,k+n}   \, ds_1.
\end{align*}
Let us denote
$$
t_n(s): = \sum_{k=1}^n [\HH^{(2)} \J \U(s) \HH^{(1)} ]_{k, k+n}.
$$
In these notations we may write, applying Lemma~\ref{l: second trace variance},
$$
I_1 = - \frac{1}{n} \int_0^t \, ds \int_0^s \E[ u_n(s-s_1) t_n(s_1) - \E u_n(s-s_1) t_n(s_1)]e^{i x S^0} \, ds_1 + r_n(t),
$$
where
$$
|r_n(t)| \le C\frac{t^3}{\sqrt n}.
$$
In what follows for simplicity we will not specify the term $r_n(t)$, but one should have in mind that $r_n(t)$ goes to
zero as $n$ goes to infinity.  Let us rewrite the difference $\E [ t_n(s_1) u_n (s - s_1) - \E t_n(s_1) u_n (s - s_1) ]$. We have
\begin{align}\label{eq: u_n representation m = 2 first}
t_n(s_1) u_n (s - s_1)& = t_n^0(s_1) u_n^0(s- s_1) + t_n^0(s_1) \E u_n(s - s_1) \\
&+ u_n^0(s-s_1) \E t_n(s_1) + \E t_n(s_1) \E u_n(s-s_1). \nonumber
\end{align}
and
\begin{align}\label{eq: u_n representation m = 2 second}
t_n(s_1) u_n (s - s_1) &- \E t_n(s_1) u_n (s - s_1)  =  t_n^0(s_1) u_n^0(s- s_1)\\
&+ t_n^0(s_1) \E u_n(s - s_1) + u_n^0(s-s_1) \E t_n(s_1) \nonumber \\
&-\E t_n^0(s_1) u_n^0 (s - s_1) . \nonumber
\end{align}
From~\eqref{eq: u_n representation m = 2 second} the term $I_1$ may be rewritten as follows
\begin{align*}
I_1 &= - \frac{1}{n} \int_0^t \, ds \int_0^s \E u_n(s - s_1) \E t_n^0(s_1) e^{i x S^0} \, ds_1 \\
&- \frac{1}{n} \int_0^t \, ds \int_0^s \E t_n(s - s_1) Y_n(x, s_1) \, ds_1 + r_n(t) =: I_{11} + I_{12} + r_n(t).
\end{align*}
Let us investigate $t_n(s)$.
We may write
\begin{align*}
\E t_n(s) &= \frac{1}{\sqrt n} \sum_{j,k = 1}^n \E Y_{jk}^{(2)} [\U(s) \HH^{(1)} ]_{k+n,j+n} \\
&= \frac{1}{\sqrt n} \sum_{j,k = 1}^n \E \left [ \frac{\partial \U(s)}{\partial Y_{jk}^{(2)}} \HH^{(1)}  \right ]_{k+n,j+n} +
\frac{1}{\sqrt n} \sum_{j,k = 1}^n \E \left [ \U(s) \frac{\partial \HH^{(1)}}{\partial Y_{jk}^{(2)}} \right ]_{k+n,j+n}.
\end{align*}
From Lemma~\ref{l: U derivative q geq 2} we conclude
\begin{align}\label{eq: t_n calculation}
&\sum_{j,k=1}^n\left [ \frac{\partial \U(s)}{\partial Y_{jk}^{(2)}} \HH^{(1)}  \right ]_{k+n,j+n} \\
& = \frac{i }{\sqrt n} \int_0^s \sum_{j,k = 1}^n   \sum_{l=1}^{2n} [\U(s_1) ]_{k+n,k+n}[ \HH^{(2)} \J\U(s-s_1)]_{j+n, l} [\HH^{(1)}]_{l, j+n} \, ds_1 \nonumber\\
&+\frac{i }{\sqrt n} \int_0^s \sum_{j,k = 1}^n  \sum_{l=1}^{2n}   [\U(s_1) ]_{l,k+n} [ \HH^{(2)} \J\U(s-s_1)]_{j+n, k+n} [\HH^{(1)}]_{l, j+n} \, ds_1 \nonumber\\
&= \frac{i }{\sqrt n} \int_0^s u_n(s_1)  \sum_{j = 1}^n [ \HH^{(2)} \J\U(s-s_1) \HH^{(1)}]_{j+n, j+n} \, ds_1 \nonumber\\
&+\frac{ i s }{\sqrt n} \sum_{k = 1}^n   [\U(s) \hat \V]_{k+n, k+n}.\nonumber
\end{align}
It is easy to see that
\begin{align*}
&\sum_{j = 1}^n [ \HH^{(2)} \J\U(s-s_1) \HH^{(1)}]_{j+n, j+n} = \sum_{j = 1}^n [ \hat \V \U(s-s_1)]_{j+n, j+n} \\
&= -i \sum_{j = 1}^n [\U'(s-s_1)]_{j+n, j+n} = - i u_n'(s-s_1),
\end{align*}
and
$$
\sum_{k = 1}^n   [\U(s) \hat \V]_{k+n, k+n} = -i u_n'(s).
$$
For the second term we have
\begin{align*}
\sum_{j,k = 1}^n \E \left [ \U(s) \frac{\partial \HH^{(1)}}{\partial Y_{jk}^{(2)}} \right ]_{k+n,j+n} = \sqrt n \E u_n(s).
\end{align*}
It follows that
$$
\E t_n(s) = \frac{1}{n} \int_0^s \E u_n(s_1) u_n'(s-s_1) \, ds_1 + \E u_n(s) + \frac{s}{n} \E u_n'(s).
$$
and
\begin{align*}
I_{12} &= -\frac{1}{n^2}  \int_0^t \, ds \int_0^s  Y_n(x, s_1) \, ds_1  \int_0^{s-s_1} \E u_n(s_2) u_n'(s - s_1 - s_2) \, ds_2  \\
& - \frac{1}{n} \int_0^t \, ds \int_0^s  Y_n(x, s_1) \E u_n(s-s_1) \, ds_1 \\
& - \frac{1}{n^2}\int_0^t \, ds \int_0^s  Y_n(x, s_1) \E u_n'(s - s_1) \, ds_1.
\end{align*}
Since $|Y_n(x,s)| \le C$ (see Lemma~\ref{l: trace variance})  and $\E |u_n'(s-s_1)| \le n \sqrt n$ we get
\begin{align*}
I_{12} &= -\frac{1}{n^2}  \int_0^t \, ds \int_0^s  Y_n(x, s_1) \, ds_1  \int_0^{s-s_1} \E u_n(s_2) u_n'(s - s_1 - s_2) \, ds_2  \\
& - \frac{1}{n} \int_0^t \, ds \int_0^s  Y_n(x, s_1) \E u_n(s-s_1) \, ds_1 + r_n(t)
\end{align*}
Applying Lemma~\ref{l: trace variance} we may write
\begin{align*}
I_{12} &= -\frac{1}{n^2}  \int_0^t \, ds \int_0^s  Y_n(x, s_1) \, ds_1  \int_0^{s-s_1} \E u_n(s_2) \E u_n'(s - s_1 - s_2) \, ds_2  \\
& - \frac{1}{n} \int_0^t \, ds \int_0^s  Y_n(x, s_1) \E u_n(s-s_1) \, ds_1 + r_n(t).
\end{align*}
Changing the limits of integration we get
\begin{align*}
I_{12} = -\frac{1}{n^2}  \int_0^t Y_n(x, s) \, ds \int_0^{t-s} \E u_n(s_1) \E u_n(t - s - s_1) \, ds_1 + r_n(t).
\end{align*}

We investigate now $\E t_n^0(s) e^{i x S^0}$.
\begin{align*}
&\E t_n^0(s) e^{i x S^0} \\
&=\frac{1}{\sqrt n} \sum_{j,k = 1}^n \E [ Y_{jk}^{(2)} [\U(s) \HH^{(1)} ]_{k+n,j+n} - \E  Y_{jk}^{(2)} [\U(s) \HH^{(1)} ]_{k+n,j+n}] e^{i x S^0} \\
& =\frac{1}{\sqrt n} \sum_{j,k = 1}^n \E \left [ \left [ \frac{\partial \U(s)}{\partial Y_{jk}^{(2)}} \HH^{(1)}  \right ]_{k+n,j+n} -
\E \left [ \frac{\partial \U(s)}{\partial Y_{jk}^{(2)}} \HH^{(1)}  \right ]_{k+n,j+n} \right ] e^{i x S^0} \\
&+\frac{1}{\sqrt n} \sum_{j,k = 1}^n \E \left [ \left [ \U(s) \frac{\partial \HH^{(1)}}{\partial Y_{jk}^{(2)}} \right ]_{k+n,j+n} -
\E \left [ \U(s) \frac{\partial \HH^{(1)}}{\partial Y_{jk}^{(2)}} \right ]_{k+n,j+n} \right ] e^{i x S^0} \\
&+ \frac{i x}{\sqrt n} \sum_{j,k = 1}^n \E [\U(s) \HH^{(1)} ]_{k+n,j+n} \frac{\partial S}{\partial Y_{jk}^{(2)}} e^{i x S^0} =: \mathcal J_1 + \mathcal J_2 + \mathcal J_3.
\end{align*}
For the first term $\mathcal J_1$ we may use~\eqref{eq: t_n calculation} and get
\begin{align*}
\mathcal J_1 &= \frac{1}{n} \int_0^s \E [ u_n(s_1) u_n'(s-s_1) - \E u_n(s_1) u_n'(s-s_1)] e^{i x S^0} \, ds_1 \\
&+\frac{s}{n} \E[u_n'(s) - \E u_n'(s)] e^{i x S^0}.
\end{align*}
Repeating the step~\eqref{eq: u_n representation m = 2 first} and~\eqref{eq: u_n representation m = 2 second} the last relation may be rewritten in the following way
\begin{align*}
\mathcal J_1 & = \frac{1}{n} \int_0^s [ \E u_n(s_1) \E (u_n')^0(s-s_1)e^{i x S^0}  +  \E u_n'(s-s_1) \E u_n^0(x, s_1) e^{i x S^0}] \, ds_1\\
&+\frac{s}{n} \E[u_n'(s) - \E u_n'(s)] e^{i x S^0}.
\end{align*}
For the second term $\mathcal J_2$ we have
$$
\mathcal J_2 = Y_n(x, s).
$$
Let us consider now the term $\mathcal J_3$. We have
\begin{align*}
\mathcal J_3 &= \frac{ix}{\sqrt n} \sum_{j,k = 1}^n \E [\U(s) \HH^{(1)} ]_{k+n,j+n} \frac{\partial S}{\partial Y_{jk}^{(2)}} e^{i x S^0} \\
& = \frac{ix}{n} \sum_{j,k = 1}^n \E [\U(s) \HH^{(1)} ]_{k+n,j+n}  [\HH^{(2)} \J f'(\hat \V)]_{j+n, k+n} e^{i x S^0}  \\
& = \frac{x}{2n} \E \Tr \U'(s) f'(\hat \V) e^{i x S^0}.
\end{align*}
where we have applied the unitary matrix block decomposition~\eqref{eq: U matrix representation}
and used the following fact
$$
\int_{-\infty}^{\infty} u \hat f(u) \sum_{k=1}^n[\U_3(s) \W \HH (\Lambda(u) + \Lambda(-u)) \HH^{*}]_{kk} = 0
$$
which is valid for $f(\lambda)$ is an even function.

Finally we will have
\begin{align*}
I_{11} &=  -\frac{1}{n^2} \int_0^t \, ds \int_0^s \E u_n(s - s_1) \int_0^{s_1} [ \E u_n(s_2) \E (u_n')^0(s_1-s_2)e^{i x S^0}  \\
&\qquad\qquad\qquad\qquad\qquad+  \E u_n'(s_1-s_2) \E u_n^0 (x, s_2) e^{i x S^0} ]  \, ds_2 \, ds_1\\
&-\frac{1}{n^2} \int_0^t \, ds \int_0^s s_1 \E u_n(s - s_1) \E (u_n'(s_1))^0 \, ds_1 \\
&-\frac{1}{n} \int_0^t \, ds \int_0^s \E u_n(s_1) Y_n(x, s-s_1) \, ds_1\\
& - \frac{x}{2n^2}  \int_0^t \, ds \int_0^s \E u_n(s_1) \E \Tr \U'(s-s_1) f'(\hat \V) e^{i x S^0} \, ds_1.
\end{align*}
Changing the limits of integration, applying Lemma~\ref{l: trace variance} and $\E |u_n(t)| \le n$, we get
\begin{align*}
I_{11} &=  -\frac{2}{n^2} \int_0^t Y_n(x,s) \, ds \int_0^{t-s} \E u_n(s_1) \E u_n(t-s-s_1) \, ds_1  \\
& - \frac{x}{2n^2}  \int_0^t \E u_n(s) \E \Tr (\U(t-s) - \I) f'(\hat \V) e^{i x S^0} \, ds + r_n(t).
\end{align*}
Finally for the term $I_1$ we may write the following representation.
\begin{align*}
I_{1} &=  -\frac{3}{n^2} \int_0^t Y_n(x,s) \, ds \int_0^{t-s} \E u_n(s_1) \E u_n(t-s-s_1) \, ds_1  \\
& -\frac{x Z_n(x)}{2n^2}  \int_0^t \E u_n(s) \E \Tr (\U(t-s) - \I) f'(\hat \V) \, ds + r_n(t).
\end{align*}
It remains to calculate the term $I_2$.
\begin{align*}
I_2 &= -\frac{x}{\sqrt n} \int_0^t \sum_{j, k = 1}^n \E [\HH^{(2)} \J \U(s)]_{k, j } \frac{\partial S}{\partial Y_{jk}^{(1)} } e^{i x S^0} \, ds \\
& = -\frac{x}{n} \int_0^t \sum_{j, k = 1}^n \E [\HH^{(2)} \J \U(s)]_{k, j }  [f'(\hat \V) \HH^{(1)} ]_{j, k+n} e^{i x S^0} \, ds \\
& = -\frac{x Z_n(x)}{n} \int_0^t \sum_{k = 1}^n \E [\HH^{(2)} \J \U(s)f'(\hat \V) \HH^{(1)} ]_{k, k+n} \, ds
\end{align*}
where we have used the following observation. First of all we may write
\begin{align*}
&[\HH^{(2)} \J \U(s) \U(u) \HH^{(1)} ]_{k, k+n} = \sum_{j=1}^{2n}  [\HH^{(2)} \J \U(s)]_{k, j } [\U(u) \HH^{(1)} ]_{j, k+n} \\
&=\Tr [(\Y^{(2)})^T \Y^{(2)} \U_3(s) \U_2(u)] + \Tr [(\Y^{(2)})^T \Y^{(2)} \U_4(s) \U_4(u)] \\
& = \sum_{j = 1}^n \E [\HH^{(2)} \J \U(s)]_{k, j }  [\U(u) \HH^{(1)} ]_{j, k+n} + \Tr [(\Y^{(2)})^T \Y^{(2)} \U_4(s) \U_4(u)].
\end{align*}
From the representation~\eqref{eq: U matrix representation} it follows that
$$
\U_4(s) \U_4(u) = 4 \HH \D(s,u) \HH^{*},
$$
where $\D(s,u)$ is a diagonal matrix with $D_{jj}(s,u) = \cos (s_j s) \cos (s_j u), j = 1, ... , n$. Since $\hat f(t)$ is an even function we have
$$
\int_{-\infty}^\infty u \hat f(u) \int_0^t \E \Tr [(\Y^{(2)})^T \Y^{(2)} \HH \D(s,u) \HH^{*}] \, ds \, du = 0.
$$
We investigate now the behavior of
\begin{equation}\label{eq: term V}
\sum_{k = 1}^n [\HH^{(2)} \J \U(t)f'(\hat \V) \HH^{(1)} ]_{k, k+n}.
\end{equation}
Applying the same arguments as before
\begin{align*}
&\E \sum_{k = 1}^n [\HH^{(2)} \J \U(t)f'(\hat \V) \HH^{(1)} ]_{k, k+n} = \frac{1}{\sqrt n}\sum_{j, k = 1}^n \E Y_{jk}^{(2)} [\U(t)f'(\hat \V) \HH^{(1)} ]_{k+n, j+n}\\
&=\frac{1}{\sqrt n}\sum_{j, k = 1}^n \E \left [\frac{\partial \U(t)}{\partial Y_{jk}^{(2)} } f'(\hat \V) \HH^{(1)} \right ]_{k+n, j+n} +
\frac{1}{\sqrt n}\sum_{j, k = 1}^n \E \left [\U(t)f'(\hat \V) \frac{\partial \HH^{(1)}}{\partial Y_{jk}^{(2)} } \right ]_{k+n, j+n} \\
&=: T_1 + T_2.
\end{align*}
The term $T_1$ may be expanded in the sum of two terms
\begin{align*}
& T_1 = \frac{1}{\sqrt n}\sum_{j, k = 1}^n \sum_{l=1}^{2n} \E \left [\frac{\partial \U(t)}{\partial Y_{jk}^{(2)} } \right ]_{k+n,l} [f'(\hat \V) \HH^{(1)} ]_{l, j+n} \\
& =  \frac{i}{n}\int_0^t \sum_{k = 1}^n \E U_{k+n,k+n}(s) \sum_{j = 1}^n [\HH^{(2)} \J \U(t-s) f'(\hat \V) \HH^{(1)} ]_{j+n, j+n} \, ds\\
&+ \frac{i}{n}\int_0^t \sum_{k,j = 1}^n \E [\U(s) f'(\hat \V) \HH^{(1)}]_{k+n,j+n} [\HH^{(2)} \J \U(t-s)]_{j+n,k+n} \\
& =  \frac{1}{2n}\int_0^t \E u_n(s) \Tr \U'(t-s) f'(\hat \V) \, ds\\
&+ \frac{i}{n}\int_0^t \sum_{k,j = 1}^n \E [\U(s) f'(\hat \V) \HH^{(1)}]_{k+n,j+n} [\HH^{(2)} \J \U(t-s)]_{j+n,k+n}.
\end{align*}
For the term $T_2$ we get
$$
T_2 = \sum_{k=1}^n \E [\U(t) f'(\hat \V)]_{k+n, k+n} = \frac{1}{2}\E\Tr \U(t)f'(\hat \V).
$$
We get the following decomposition for~\eqref{eq: term V}
\begin{align*}
&\E \sum_{k = 1}^n [\HH^{(2)} \J \U(t)f'(\hat \V) \HH^{(1)} ]_{k, k+n} = \frac{1}{n}\int_0^t \E u_n(s)\Tr \U'(t-s) f'(\hat \V) \, ds \\
&\qquad\qquad\qquad+\frac{1}{2}\Tr \U(t)f'(\hat \V) \\
&\qquad\qquad\qquad+\frac{i}{n}\int_0^t \sum_{k,j = 1}^n \E [\U(s) f'(\hat \V) \HH^{(1)}]_{k+n,j+n} [\HH^{(2)} \J \U(t-s)]_{j+n,k+n}.
\end{align*}
Inserting this equation to $I_2$ we will have
\begin{align*}
&I_2  = -\frac{x Z_n(x)}{2 n^2} \int_0^t \int_0^s \E u_n(s_1) \Tr \U'(s-s_1) f'(\hat \V) \, ds_1 \, ds \\
&\qquad\qquad\qquad -\frac{x Z_n(x)}{2 n}\int_0^t  \Tr \U(s)f'(\hat \V) \, ds + r_n(t).
\end{align*}
Changing the limits of integration and  applying Lemma~\ref{l: trace variance}, we get
\begin{align*}
&I_2  = -\frac{x Z_n(x)}{2 n^2} \int_0^t \E u_n(s) \E \Tr [\U(t-s) f'(\hat \V)] \, ds \\
&\qquad\qquad\qquad\qquad-\frac{x Z_n(x)}{2 n}\int_0^t  \Tr \U(s)f'(\hat \V) \, ds + r_n(t),
\end{align*}
where we have also used that $f(\lambda)$ and $\E u(s)$ are even functions. It follows from~\eqref{eq: mathcal A representation} that we have derived representation for $\mathcal A_1$. The same arguments are valid for $\mathcal A_2$.

To simplify our notations let us introduce the following quantity
\begin{align*}
A_n(t): &=  - \frac{1}{2n}  \E \Tr [\U(t) f'(\hat \V)].
\end{align*}
One may see that $A_n(t)$ depends on $t$ only, but $Z_n(x)$ depends on $x$ only.
We derive an equation for $Y_n(x,t)$:
\begin{align}\label{eq: Y_n equation}
&Y_n(x,t) + 3 \int_0^t Y_n(x,s) v_n^{2*}(t-s)\, ds \nonumber\\
&\qquad\qquad\qquad\qquad = x Z_n(x) \int_0^t [v_n(s) A_n(t-s) + A_n(s)]\, ds + r_n(x,t),
\end{align}
where
$$
v_n(t) : = \frac{1}{n} \E u_n(t).
$$
As $n$ goes to infinity the sequence $v_n(t)$ uniformly converges to the following function
$$
v(t) = \int_{-a}^{a} e^{i t x} p(x) \, dx,
$$
where
\begin{align*}
p(x): = |x| P_2(x^2) \text{ and } a:= \sqrt K_2,
\end{align*}
with $P_2(x), K_2$ defined in Appendix~\ref{Fuss-Catalan Distribution}. This function is a characteristic function of Fuss-Catalan distribution.
The same arguments lead to
$$
A(t): = \lim_{n \rightarrow \infty} A_n(t) = - \int_{-a}^a e^{it\lambda} f'(\lambda) p(\lambda) \, d\lambda.
$$
Taking a limit in~\eqref{eq: Y_n equation} with respect to $n_l \rightarrow \infty$ we get
\begin{align}\label{eq: Y equation}
&Y(x,t) + 3 \int_0^t Y(x,s) v^{2*}(t-s)\, ds \nonumber \\
&\qquad\qquad\qquad\qquad\qquad= x Z(x) \int_0^t [2 v(s) A(t-s) + A(s)]\, ds,
\end{align}
Denote by $F(z), V(z)$ and $R(z)$ the generalized Fourier transform of $Y(x,t), v(t)$ and $A(t)$ respectively (see Appendix~\ref{ap: laplace transform}. Applying Statement~\ref{stat: Laplace transform}
we get from~\eqref{eq: Y equation}
$$
F(z) - 3 F(z) V^2(z) = - 2 i x Z(x) R(z) V(z) + \frac{i x Z(x) R(z)}{z}.
$$
and it follows that
\begin{equation}\label{eq: F(z) equation}
F(z) = \frac{ i x Z(x)  ( - 2 R(z) V(z) + R(z)/z ) }{1 - 3 V^2(z)}.
\end{equation}
It is easy to check that
$$
V(z) = s(z),
$$
where $s(z)$ is the Stieltjes transform of $p(x)$. In these notations we may rewrite~\eqref{eq: F(z) equation}
as follows
\begin{equation}\label{eq: F(z) equation 2}
F(z) = \frac{ i x Z(x)  ( - 2 R(z) s(z) + R(z)/z ) }{1 - 3 s^2(z)}.
\end{equation}
By Lemma~\ref{l: FT of K} the inverse Fourier transform of
$$
\frac{1/z - 2 s(z)}{1 - 3 s^2(z)}.
$$
is given by
\begin{equation}\label{eq: T(t) equation}
T(t) =  \frac{1}{\pi}\int_{-a}^a \frac{ e^{it\mu}}{3p_1(\mu)}  \frac{4p_1(\mu)^4 + 11 p_1(\mu)^2 + 4}{4 p_1(\mu)^2 + 3} \, d\mu,
\end{equation}
where $p_1(\mu) := \pi p(\lambda)$.
From~\eqref{eq: F(z) equation 2} and~\eqref{eq: T(t) equation} we conclude
\begin{align*}
&Y(x,t) = - \frac{x Z(x)}{\pi^2} \int_0^t \int_{-a}^a e^{is \lambda} f'(\lambda) p(\lambda) \, d\lambda \\
&\qquad\qquad\qquad\times\int_{-a}^{a} \frac{ e^{i(t-s)\mu}}{3p(\mu)} \frac{4p_1(\mu)^4 + 11 p_1(\mu)^2 + 4}{4 p_1(\mu)^2 + 3} \, dv
\end{align*}
Simple calculation yields
\begin{align}\label{eq: Y equation 2}
&Y(x,t) =   \frac{i x Z(x)}{\pi^2}  \int_{-a}^a p(\lambda) \, d\lambda \nonumber\\
&\qquad\qquad\qquad\times\int_{-a}^a \frac{e^{i t \lambda} - e^{i t \mu}}{\lambda-\mu}
f'(\lambda)  \frac{1}{3p(\mu)} \frac{4p_1(\mu)^4 + 11 p_1(\mu)^2 + 4}{4 p_1(\mu)^2 + 3}  \, d\mu.
\end{align}
Finally we get from~\eqref{eq: Z_n derivative equation} and~\eqref{eq: Y equation 2}
\begin{align}\label{eq: Z_n derivative equation 2}
&\lim_{n_l \rightarrow \infty} Z_n'(x,t) =   - \frac{ x Z(x)}{\pi^2}  \int_{-a}^a p(\lambda) \, d\lambda \nonumber\\
&\qquad\qquad\times\int_{-a}^a \frac{f(\lambda) - f(\mu)}{\lambda-\mu}
f'(\lambda)  \frac{1}{3p(\mu)} \frac{4p_1(\mu)^4 + 11 p_1(\mu)^2 + 4}{4 p_1(\mu)^2 + 3}  \, d\mu.
\end{align}
One may see that
$$
(f(\lambda) - f(\mu))f'(\lambda) = \frac{1}{2}\frac{\partial}{\partial \lambda}(f(\lambda) - f(\mu))^2,
$$
and~\eqref{eq: Z_n derivative equation 2} may be rewritten applying integration by parts in the following way
\begin{align*}
&\lim_{n_l \rightarrow \infty} Z_n' =   - \frac{ x Z(x)}{2\pi^2}  \int_{-a}^a \int_{-a}^a \frac{(f(\lambda) - f(\mu))^2}{(\lambda-\mu)^2} \\
&\qquad\qquad\qquad\times \frac{[p(\lambda)-p'(\lambda)(\lambda-\mu)]}{3p(\mu)} \frac{[4p_1(\mu)^4 + 11 p_1(\mu)^2 + 4]}{4 p_1(\mu)^2 + 3}  \, d\lambda \, d\mu.
\end{align*}
Comparing this with~\eqref{eq: we have to show} we may conclude the proof of Theorem~\ref{th:main}.
\end{proof}

\section{The General Case} \label{section: general case}
In this section we finish the proof of Theorem~\ref{th:main general case}. Applying the method of Bentkus from~\cite{Bentkus2003} and the method of Tikhomirov from~\cite{Tikh1980},\cite{Tikh2001} we show that one may substitute the general matrix by the matrix with i.i.d. Gaussian random variables and express the characteristic function in the general case via the characteristic function in the Gaussian case.  These methods have been applied several times in random matrix theory, see, for example,~\cite{GotNauTikh2012} and~\cite{GotzNauTikh2014} .

\subsection{Truncation}
In this subsection we show by standard arguments that we may truncate the entries of $\X^{(q)}, q = 1,2$. For all $1 \le j,k\le n, q = 1,2$, we introduce truncated random variables $\hat X_{jk}^{(q)}: = X_{jk}^{(q)} \mathbb I(|X_{jk}^{(q)}| \geq \tau \sqrt n)$. Denote
by $\hat \X^{(q)}: = [\hat X_{jk}^{(q)}]_{j,k=1}^n$. One may see that
\begin{align}\label{eq: truncation}
\Pb (\hat \X^{(q)} \neq \X^{(q)}) \le \sum_{j,k = 1}^n \E \mathbb I(|X_{jk}^{(q)}| \geq \tau \sqrt n) \le \frac{1}{\tau^4}\E X_{11}^4 \mathbb I(|X_{jk}^{(q)}| \geq \tau \sqrt n).
\end{align}
Let $\hat S^0$ denote $S^0$ with all $X_{jk}^{(q)}$ replaced by $\hat X_{jk}^{(q)}$. It follows from~\eqref{eq: truncation} that
$$
\lim_{n \rightarrow \infty}|\E e^{itS^0} - \E e^{i t \hat S^0} |  = 0.
$$
By the similar arguments one may show that we may assume that $\E \hat X_{jk}^{(q)} = 0$ and $\Var(\hat X_{jk}^{(q)}) = 1$.
In what follows we will assume that $|X_{jk}^{(q)}| \le \tau \sqrt n$.
\begin{remark}
It is easy to see that one may assume $X_{jk}^{(q)}$ non i.i.d., but imply the following Lendeberg type condition on the fourth moments
$$
\text{ for all } \tau > 0 \quad \lim_{n \rightarrow \infty} \frac{1}{n^2} \sum_{j,k=1}^n \E (X_{jk}^{(q)})^4\mathbb I(|X_{jk}^{(q)}| \geq \tau \sqrt n)  = 0.
$$
\end{remark}
\subsection{From the general case to the Gaussian case}

Let $\Y^{(1)},\Y^{(2)}$ be $n\times n$ independent random matrices  with independent Gaussian entries $n^{-1/2} Y_{jk}^{(q)}$ such that
\begin{align*}
&\E Y_{jk}^{(q)}=0,\qquad \E (Y_{jk}^{(q)})^2=1,\qquad\text{for any}\quad q=1,2,\, j,k=1\ldots,n.
\end{align*}
For any $\varphi\in[0,\frac{\pi}2]$ and any $\nu=1, 2$, introduce the matrices
\begin{equation*}
\Z^{(q)}(\varphi)=\X^{(q)}\sin\varphi+\Y^{(q)}\cos\varphi
\end{equation*}
where
$$[\Z^{(q)}(\varphi)]_{jk}=\frac1{\sqrt{n}}Z_{jk}^{(q)}= \frac1{\sqrt{n}}(X_{jk}^{(q)}\sin\varphi+Y_{jk}^{(q}\cos\varphi).
$$
We define the matrices $\W(\varphi)$, $\HH^{(q)}(\varphi)$, $\V(\varphi)$, $\widehat{\V}(\varphi)$, $\U(\varphi, t)$ by
\begin{align*}
\W(\varphi)=\prod_{q=1}^2\Z^{(q)}(\varphi),\quad
\HH^{(q)}(\varphi)=\begin{bmatrix}&\Z^{(q)}(\varphi))&\OO\\&\OO&\Z^{(m-q+1)}
(\varphi)\end{bmatrix}\\
\V(\varphi)=\prod_{q=1}^2\HH^{(q)}(\varphi),\quad
\widehat{\V}(\varphi)=\V(\varphi)\J,\quad
\U(\varphi, t )= e^{- i t \hat \V}
\end{align*}
Recall that $\I$ (with sub-index or without it) denotes the unit matrix of corresponding order, $\J=\begin{bmatrix}&\OO&
\I\\ &\I&\OO\end{bmatrix}$ and
$\OO$ denotes the matrix with zero-entries.

Let $S(\varphi) = \Tr f(\Z(\varphi)), S^0(\varphi) = S^0(\varphi) - \E S^0(\varphi)$ and
$$
g_n(t,\varphi) = \E e^{i t S^0(\varphi)}
$$

\begin{theorem}\label{universality}
Under \Cond \, $\lim_{n \rightarrow \infty} g_n(t)$ satisfies the following equation
$$
g(t, \pi/2) = g(t,0) e^{-t^2 \kappa_4 \Psi^2/4},
$$
where
\begin{equation}\label{eq: varphi square}
\Psi =  \int_{-a}^a f(\lambda) [p(\lambda) + \lambda p'(\lambda)] \, d\lambda
\end{equation}
and $\kappa_4 = \mu_4 - 3$.
\end{theorem}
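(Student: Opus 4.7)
The plan is to derive a first-order ODE in $\varphi$ for the limit $g(t,\varphi) := \lim_n g_n(t,\varphi)$, obtained by interpolating between the Gaussian ensemble ($\varphi=0$) and the general one ($\varphi=\pi/2$). Differentiating and using $\partial_\varphi Z^{(q)}_{jk}(\varphi) = X^{(q)}_{jk}\cos\varphi - Y^{(q)}_{jk}\sin\varphi$,
$$
\partial_\varphi g_n(t,\varphi) = it\sum_{q,j,k}\mathbb{E}\bigl[(X^{(q)}_{jk}\cos\varphi - Y^{(q)}_{jk}\sin\varphi)\,T^{(q)}_{jk}\,e^{itS^{0}(\varphi)}\bigr],
$$
where $T^{(q)}_{jk} := \partial_{Z^{(q)}_{jk}}S(\varphi)$. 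I would open each summand by Stein's identity for the Gaussian $Y^{(q)}_{jk}$ and by the Bentkus--Tikhomirov generalized Stein/cumulant expansion to fourth order for $X^{(q)}_{jk}$, converting inner derivatives via $\partial_X = \sin\varphi\,\partial_{Z^{(q)}_{jk}}$ and $\partial_Y = \cos\varphi\,\partial_{Z^{(q)}_{jk}}$. Because $\kappa_2(X)=\kappa_2(Y)=1$, the second-moment contributions (both carrying the weight $\sin\varphi\cos\varphi$) cancel exactly. The truncation $|X^{(q)}_{jk}|\le\tau\sqrt n$ used in the first subsection controls the remainder of the cumulant expansion to $O(n^{-1/2})$ after summation.

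The surviving terms carry $\kappa_3$ with weight $\sin^2\varphi\cos\varphi$ and $\kappa_4$ with weight $\sin^3\varphi\cos\varphi$. A key structural simplification is that $\hat{\V}(\varphi)$ is bi-linear in $(\Z^{(1)},\Z^{(2)})$, so $\partial^{2}_{Z^{(q)}_{jk}}\hat{\V}=0$; this collapses all higher $Z$-derivatives of $T^{(q)}_{jk}$ into Fr\'echet derivatives of $f$ against iterated copies of the rank-two perturbation $\partial_{Z^{(q)}_{jk}}\hat{\V}$. The $\kappa_3$-contribution is then shown to be $o(1)$ as follows: after summation, the leading piece reduces to $\mathbb{E}\,\Tr[P(\hat{\V})\,f'''(\hat{\V})]$ for a polynomial $P$ of \emph{even} degree. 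Since $f$ is even by the symmetrization of Section~\ref{section: gaussian case}, $f'''$ is odd; combined with the $\lambda\mapsto -\lambda$ symmetry of the spectrum of $\hat{\V}$ built into the block Hermitization~\eqref{matrix H}, this trace vanishes identically in the limit.

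For the $\kappa_4$-term, a Leibniz expansion of $\partial^{3}_{Z^{(q)}_{jk}}(T^{(q)}_{jk}e^{itS^0})$ produces pieces of the form $(it)^k\cdot(\text{product of }T_\ell\text{'s})\cdot e^{itS^0}$, and only the $(it)^1$-piece contributes at order $t^2$. The empirical Poincar\'e inequality~\eqref{eq:variance EPI} and Statement~\ref{stat: PI} give enough concentration to replace $e^{itS^0}$ by $g_n(t,\varphi)$ with $o(1)$ error and to decouple the arising products of traces. The surviving deterministic factor splits into two copies (one from the ``external'' $T_1$ in the original sum, one from differentiating $e^{itS^0}$), each identified using~\eqref{eq: convergence_to_limit} and integration by parts with
$$
\Psi_n := -\frac{1}{n}\,\mathbb{E}\,\Tr\bigl[\hat{\V}\,f'(\hat{\V})\bigr] \;\longrightarrow\; \int_{-a}^{a} f(\lambda)\bigl[p(\lambda)+\lambda\,p'(\lambda)\bigr]\,d\lambda \;=\; \Psi.
$$
This yields the asymptotic ODE
$$
\partial_\varphi g_n(t,\varphi) = -\,t^2\,\kappa_4\,\Psi^2\,\sin^3\varphi\cos\varphi\,g_n(t,\varphi) + o(1),
$$
uniformly on compact sets in $(t,\varphi)$.

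Finally, passing to the limit and integrating $\partial_\varphi\log g(t,\varphi) = -t^2\kappa_4\Psi^2\sin^3\varphi\cos\varphi$ from $0$ to $\pi/2$, using $\int_0^{\pi/2}\sin^3\varphi\cos\varphi\,d\varphi = 1/4$, yields $g(t,\pi/2) = g(t,0)\exp(-t^2\kappa_4\Psi^2/4)$, as asserted. I expect the principal obstacle to be the $\kappa_4$-step rather than the $\kappa_3$-cancellation: beyond bounding the many subleading pieces of the Leibniz expansion, one must verify that the two $T_1$-type factors genuinely decouple into the product $\Psi^2$ and do not retain higher correlated trace structure. This decoupling is where the variance bound from Statement~\ref{stat: PI}, applied to $(1/n)\Tr[\hat{\V}f'(\hat{\V})]$, is indispensable.
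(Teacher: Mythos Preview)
Your overall interpolation strategy matches the paper exactly: differentiate $g_n(t,\varphi)$ in $\varphi$, expand to extract the $\kappa_4\sin^3\varphi\cos\varphi$ contribution, identify the limiting ODE, and integrate. However, two of your intermediate steps rest on mechanisms that are not available, and a third understates the work involved.

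\textbf{The $\kappa_3$ term does not vanish by parity.} You claim the third-cumulant contribution collapses to $\mathbb{E}\,\Tr[P(\hat\V)f'''(\hat\V)]$ with $P$ even, and then invoke the $\lambda\mapsto-\lambda$ symmetry of the spectrum. But the second derivative $\Phi_{jkq}^{(2)}(0)$ does not produce a spectral trace: it is built from specific matrix entries such as $[\U(s)\HH^{(1)}]_{j,k+n}$ (see the paper's decomposition into $L^1,L^2,L^3$), and after summing over $j,k$ one obtains cubic sums in these entries, not $\Tr g(\hat\V)$ for any $g$. The paper kills this term by a direct magnitude estimate: the prefactor is $it/\sqrt n$, and a Cauchy--Schwarz bound on $\sum_{j,k}[\U(s)\HH^{(1)}]_{j,k+n}[\U(u-s)\HH^{(1)}]_{j,k+n}[\U(v)\HH^{(1)}]_{j,k+n}$ shows the sum is $O((\tau+1)n^{3/2})$, giving $O(n^{-1/2})$ overall. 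This is power-counting, not symmetry.

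\textbf{Poincar\'e is not available for the interpolated ensemble.} You invoke \eqref{eq:variance EPI} and Statement~\ref{stat: PI} to decouple $e^{itS^0}$ and to break the $\kappa_4$ piece into a product. Those results require that the entries satisfy $PI(\sigma^2)$; in the paper they are proved and used only for the Gaussian matrices $\Y^{(q)}$. For $\varphi\in(0,\pi/2]$ the entries $Z^{(q)}_{jk}(\varphi)=X^{(q)}_{jk}\sin\varphi+Y^{(q)}_{jk}\cos\varphi$ satisfy only \Cond\ (finite fourth moment, truncated at $\tau\sqrt n$), and no Poincar\'e inequality is assumed or derived for them. The paper instead proves the needed variance bounds from scratch in Lemma~\ref{l: variance of trace general case}, via a self-consistent equation for the covariance $K_{j,n}(t_1,t_2)$ whose unique solution is zero.

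\textbf{The $\Psi^2$ decoupling is not a consequence of trace concentration.} Your shortcut $\Psi_n=-\tfrac1n\mathbb{E}\,\Tr[\hat\V f'(\hat\V)]\to\Psi$ is a correct identity for the target number, but the surviving $\kappa_4$ term in the expansion is \emph{not} a priori $(\Psi_n)^2$: it is $\tfrac1{n}\sum_{j,k}\bigl[\partial_{Z^{(q)}_{jk}}f'(\hat\V)\text{-block}\bigr]^2_{j+n,k+n}$, a sum of squares of individual matrix entries, not a square of a normalized trace. The paper spends the bulk of Subsection~\ref{subs: third derivative} showing that each of these entries is, up to $o(1)$, the \emph{same} deterministic quantity (independent of $j,k$), by deriving and solving convolution equations for $T_k(s)$ and $V_{n,k}(s)$; only then does the $n^{-1}\sum_{j,k}(\cdot)^2$ collapse to $\Psi^2$. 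Your sketch conflates ``a trace concentrates'' with ``each diagonal entry concentrates to the same constant'', and it is the latter, harder, statement that is actually needed here.
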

\begin{proof}
We prove that the function $\lim_{n \rightarrow \infty} g_n(t,\varphi)$ satisfies the following equation
$$
\frac{\partial g(t, \varphi)}{\partial \varphi} = - \kappa_4 t^2 \sin^3 \varphi \cos \varphi \Psi^2 g(t,\varphi).
$$
It will follow from this equation that
$$
g(t, \pi/2) = g(t,0) \exp\left \{-\kappa_4 t^2 \Psi^2 \int_0^{\pi/2} \sin^3 \alpha \cos \alpha \, d\alpha \right \}
$$
Note that
$$
g_n(t, \pi/2) - g_n(t,0) = \int_0^{\pi/2} \frac{\partial g_n(t, \varphi)}{\partial \varphi} \, d\varphi
$$
Similarly to the section~\ref{section: gaussian case} it is sufficient to prove that any converging subsequences $\{g_{n_l}\}$ and $\left\{\frac{\partial g_{n_l}}{\partial \varphi}\right\}$ satisfy
\begin{equation*}
\lim_{n_l \rightarrow \infty} g_{n_l}(t, \varphi) = g(t, \varphi), \quad \lim_{n_l \rightarrow \infty} \frac{\partial g_{n_l}(t,\varphi)}{\partial \varphi} = - \kappa_4 t^2 \sin^3 \varphi \cos \varphi \Psi^2 g(t,\varphi).
\end{equation*}
By Lemma~\ref{l: S derivative} we get
\begin{equation}\label{eq: dg devidev by dphi 1}
\frac{\partial g_n(t, \varphi)}{\partial \varphi} = \frac{i t}{\sqrt n} \sum_{q=1}^2 \sum_{j,k=1}^n \E \hat Z_{jk}^{(q)} [\V_{[m-q+2,m]}\J f'(\hat \V) \V_{[1, m-q]}]_{j+n,k+n} e^{i t S^0},
\end{equation}
where
$$
\hat Z_{jk}^{(q)} := \frac{d}{d \varphi} Z_{jk}^{(q)} = X_{jk}^{(q)} \cos \varphi - Y_{jk}^{(q)} \sin \varphi.
$$
It is straightforward to check that
\begin{align}\label{eq:relation between moments}
&\E \hat Z_{jk}^{(q)} (Z_{jk}^{(q)})^p = 0, \text{ for } p = 0, 1; \\
&\E \hat Z_{jk}^{(q)} (Z_{jk}^{(q)})^2 = \E (X_{jk}^{(q)})^3 \cos^3 \varphi;\nonumber\\
\label{eq:relation between moments 2}
&\E \hat Z_{jk}^{(q)} (Z_{jk}^{(q)})^3 = \kappa_4 \sin^3 \varphi \cos \varphi.
\end{align}
Let us introduce further notations. Denote by $\V_{[\alpha, \beta]}^{j,k,q}$(x) the corresponding matrix $\V_{[\alpha, \beta]}$ with $Z_{jk}^{(q)}$ replaced by $x$. Let us also denote
$$
\Phi_{jkq}(x) := [\V_{[m-q+2,m]}^{(j,k, q)}(x)\J f'(\hat \V^{(j,k, q)}(y)) \V_{[1, m-q]}^{(j,k, q)}(x)]_{j+n,k+n} e^{i t S^0(\V^{(j,k, q)}(x))}.
$$
Applying Taylor's formula we get
\begin{align*}
\Phi_{jkq} (Z_{jk}^{(q)}) &= \sum_{p=0}^3 \frac{1}{p!}((Z_{jk}^{(q)})^p \Phi_{jkq}^{(p)}(0) +\frac{1}{3!}(Z_{jk}^{(q)})^4 \E_\theta (1 - \theta)^3 \Phi_{jkq}^{(4)}(\theta Z_{jk}^{(q)})
\end{align*}
This equation and~\eqref{eq: dg devidev by dphi 1} together imply
\begin{align*}
\frac{\partial g_n(t, \varphi)}{\partial \varphi} &= \frac{it}{n^{1/2}}\sum_{p=1}^3 \frac{1}{p!} \sum_{q=1}^2 \sum_{j,k=1}^n  \E \hat Z_{jk}^{(q)} (Z_{jk}^{(q)})^p  \E \Phi_{jkq}^{(p)} (0) \\
&+\frac{it}{3!n^{1/2}}\sum_{q=1}^2\sum_{j,k=1}^n \E (1 - \theta)^3 \hat Z_{jk}^{(q)} (Z_{jk}^{(q)})^4 \Phi_{jkq}^{(4)}(\theta Z_{jk}^{(q)})\\
& = T_1 + ... +T_4.
\end{align*}
It follows from~\eqref{eq:relation between moments} that $T_1 = 0$. In the next subsections we will investigate the term $T_k, k = 2, 3, 4$.

\subsection{The second derivative}\label{subs: second derivative}
First we note that
$$
[\V_{[m-q+2,m]}^{(j,k, q)}\J \A \V_{[1, m-q]}^{(j,k, q)}]_{j+n,k+n} = [\V_{[m-q+2,m]}\J \A \V_{[1, m-q]}]_{j+n,k+n}.
$$
for an arbitrary matrix $\A$.
It is straightforward to check that
$$
\Phi_{jkq}^{(2)}(0) = L_{jkq}^1 + L_{jkq}^2 + L_{jkq}^3,
$$
where
\begin{align*}
&L_{jkq}^1 = [\V_{[m-q+2,m]}\J \frac{\partial^2 f'(\hat \V)}{\partial (Z_{jk}^{(q)})^2}\bigg |_{Z_{jk}^{(q)}=0} \V_{[1, m-q]}]_{j+n,k+n} e^{i t S^0}, \\
&L_{jkq}^2 =  \frac{3 i t}{\sqrt n} [\V_{[m-q+2,m]} \J \frac{\partial f'(\hat \V)}{\partial Z_{jk}^{(q)}}\bigg |_{Z_{jk}^{(q)}=0} \V_{[1, m-q]}]_{j+n,k+n}  \\
&\qquad\qquad\qquad\qquad\qquad\times[\V_{[m-q+2,m]}\J f'(\hat \V)\big |_{Z_{jk}^{(q)}=0} \V_{[1, m-q]}]_{j+n,k+n}   e^{i t S^0}, \\
&L_{jkq}^3 = - \frac{t^2}{n} [\V_{[m-q+2,m]} \J f'(\hat \V)\big |_{Z_{jk}^{(q)}=0} \V_{[1, m-q]}]_{j+n,k+n}^3   e^{i t S^0}.
\end{align*}
Let us consider, for example, the term $\frac{1}{\sqrt n} L_{jkq}^2$. We have, for $q=1$,
\begin{align*}
\frac{1}{n^{1/2}} \sum_{j,k=1}^n \E L_{jkq}^2 = I_1 + I_2,
\end{align*}
where
\begin{align*}
&I_1 = \frac{3t}{n^{3/2}} \int_{-\infty}^\infty \int_{-\infty}^\infty u v \hat f(u) \hat f(v) \int_0^u
\E \sum_{j,k=1}^n [\U(s) \HH^{(1)}]_{j,k+n} \\
&\qquad\qquad\qquad\qquad \times [\U(u-s) \HH^{(1)}]_{j,k+n} [\U(v) \HH^{(1)}]_{j,k+n} \, ds \, du \, dv
\end{align*}
and
\begin{align*}
&I_2 = \frac{3t}{n^{3/2}} \int_{-\infty}^\infty \int_{-\infty}^\infty u v \hat f(u) \hat f(v)  \int_0^u
\E \sum_{j,k=1}^n [(\HH^{(1)})^T \U(s) \HH^{(1)}]_{k+n,k+n} \\
&\qquad\qquad\qquad\qquad \times [\U(u-s)]_{j,j} [\U(v) \HH^{(1)}]_{j,k+n} \, ds \, du \, dv
\end{align*}
We estimate the term $I_1$. Applying Cauchy-Schwarz inequality and orthogonality relations for $\U$ we get
\begin{align*}
&\frac{1}{n^{3/2}}\E \left |\sum_{j,k=1}^n [\U(s) \HH^{(1)}]_{j,k+n}[\U(u-s) \HH^{(1)}]_{j,k+n} [\U(v) \HH^{(1)}]_{j,k+n} \right|\\
&\le \frac{1}{n^{3}}\E \left [\sum_{l_1, l_2, l_3 =1}^{n} \left (\sum_{k=1}^n Z_{k l_1}^{(2)} Z_{k l_2}^{(2)} Z_{k l_3}^{(2)} \right)
\left (\sum_{j=1}^n [\U_2]_{jl_1}(s) [\U_2]_{jl_2}(u- s)[\U_2]_{jl_3}(v) \right )\right ]\\
&\le \frac{1}{n^{5/2}}\E^{1/2} \left [\sum_{l_1, l_2, l_3 =1}^n \left (\sum_{k=1}^n Z_{k l_1}^{(2)} Z_{k l_2}^{(2)} Z_{k l_3}^{(2)}\right)^2\right]\\
&\le \frac{1}{n^{5/2}}\E^{1/2} \left[ \sum_{l=1}^n (\sum_{k=1}^n (Z_{k l}^{(2)})^3)^2 + 3 \sum_{l_1 \neq l_2} (\sum_{k=1}^n (Z_{k l_1}^{(2)})^2 Z_{k l_2}^{(2)})^2 \right.\\
&\qquad\qquad\qquad\left.+\sum_{l_1\neq l_2 \neq l_3} \sum_{k=1}^n (Z_{k l_1}^{(2)})^2 (Z_{k l_2}^{(2)})^2 (Z_{k l_3}^{(2)})^2 \right] \\
&\le C (\tau + 1) n^{-1/2}.
\end{align*}
For the term $I_2$ we may apply the same arguments. Finally
$$
|I_1 + I_2| \le \frac{C t (\tau + 1)}{n^{1/2}} \int_{\infty}^\infty |u|^3 |\hat f(u)| \, du \int_{\infty}^\infty |v| |\hat f(v)| \, dv
$$
Analogously one may show that $\frac{1}{n^{1/2}}\sum_{j,k=1}^n L_{jkq}^1$ and $\frac{1}{n^{1/2}}\sum_{j,k=1}^n L_{jkq}^3$
goes to zero as $n$ goes to infinity. It follows that $T_2 = o(1)$.

\subsection{The third derivative}\label{subs: third derivative}
We investigate now the term $T_3$. Direct computations yield
\begin{align*}
\Phi_{jkq}^{(3)}(0) &= [\V_{[m-q+2,m]} \J \frac{\partial^3 f'(\hat \V)}{\partial (Z_{jk}^{(q)})^3}\bigg |_{Z_{jk}^{(q)}=0} \V_{[1, m-q]}]_{j+n,k+n} e^{i t S^0} \\
&+\frac{4 i t}{\sqrt n} [\V_{[m-q+2,m]}\J \frac{\partial^2 f'(\hat \V)}{\partial (Z_{jk}^{(q)})^2}\bigg |_{Z_{jk}^{(q)}=0} \V_{[1, m-q]}]_{j+n,k+n} \\
&\times[\V_{[m-q+2,m]}\J f'(\hat \V)\bigg |_{Z_{jk}^{(q)}=0} \V_{[1, m-q]}]_{j+n,k+n}   e^{i t S^0} \\
&+\frac{3 i t}{\sqrt n} [\V_{[m-q+2,m]} \J \frac{\partial f'(\hat \V)}{\partial Z_{jk}^{(q)}}\bigg |_{Z_{jk}^{(q)}=0} \V_{[1, m-q]}]_{j+n,k+n}^2 e^{i t S^0} \\
&-\frac{6t^2}{n} [\V_{[m-q+2,m]} \J f'(\hat \V)\bigg |_{Z_{jk}^{(q)}=0} \V_{[1, m-q]}]_{j+n,k+n}^2\\
&\times [\V_{[m-q+2,m]} \J \frac{\partial f'(\hat \V)}{\partial Z_{jk}^{(q)}}\bigg |_{Z_{jk}^{(q)}=0} \V_{[1, m-q]}]_{j+n,k+n} e^{i t S^0}\\
& - \frac{i t^3}{n^{3/2}} [\V_{[m-q+2,m]}\J f'(\hat \V)\bigg |_{Z_{jk}^{(q)}=0} \V_{[1, m-q]}]_{j+n,k+n}^4   e^{i t S^0}
\end{align*}
It is straightforward to check that all terms except the third are of order $o(1)$. These may be done similarly to the previous section.
Let us denote
$$
\Psi_n^{q} = \frac{1}{n}\sum_{j,k=1}^n \E[\V_{[m-q+2,m]}\J \frac{\partial f'(\hat \V)}{\partial Z_{jk}^{(q)}}\bigg |_{Z_{jk}^{(q)}=0} \V_{[1, m-q]}]_{j+n,k+n}^2 e^{i t S^0}.
$$
Our aim is to show that
\begin{equation}\label{eq: limit of Psi n}
\lim_{n_l \rightarrow \infty} \Psi_{n_l}^q = \left[\int_{-a}^a f(\lambda) [p(\lambda) + \lambda p'(\lambda)] d\lambda\right]^2.
\end{equation}
Consider the case $q=1$. We get by Lemma~\ref{l: U derivative}
\begin{align*}
&\left[\frac{\partial f'(\hat \V)}{\partial Z_{jk}^{(1)}} \HH^{(1)}\right]_{j,k+n} = \sum_{l=1}^{2n} \left [\frac{\partial f'(\hat \V)}{\partial Z_{jk}^{(1)}}\right]_{j,l}  [\HH^{(1)}]_{l,k+n} \\
&=-\frac{1}{\sqrt n} \int_{-\infty}^{\infty} s \hat f(s) \sum_{l=1}^{2n} [\U \HH^{(1)}]_{j,k+n}*[\U]_{l, j} (s) [\HH^{(1)}]_{l,k+n} \\
&\qquad\qquad\qquad\qquad + \sum_{l=1}^{2n} [\U \HH^{(1)}]_{l,k+n}*[\U]_{j, j} (s) [\HH^{(1)}]_{l,k+n} \, ds\\
&=-\frac{1}{\sqrt n} \int_{-\infty}^{\infty} s \hat f(s) [\U \HH^{(1)}]_{j,k+n}* [\U \HH^{(1)}]_{j,k+n}(s) \\
&\qquad\qquad\qquad\qquad+ [(\HH^{(1)})^T \U \HH^{(1)}]_{k+n,k+n}*[\U]_{j, j} (s) \, ds
\end{align*}
Similarly to the previous sections we may show that the first term in the last equation has the zero impact.
It is straightforward to check
$$
[(\HH^{(1)})^T \U(s) \HH^{(1)}]_{k+n,k+n} = [\HH^{(2)} \J \U(s) \HH^{(1)}]_{k,k+n} = : T_k(s).
$$
Let us investigate the following integral
$$
\int_{-\infty}^{\infty} s \hat f(s) \E T_k*\E[\U]_{j, j} (s) \, ds
$$
We have
\begin{align*}
\E T_k(s) &= \frac{1}{\sqrt n}\sum_{l=1}^n \E Z_{kl}^{(2)} [\U(s) \HH^{(1)}]_{l+n,k+n} \\
&= \frac{1}{\sqrt n}\sum_{l=1}^n\E \left[\frac{\partial \U(s)}{\partial Z_{kl}^{(2)}} \HH^{(1)}\right]_{l+n,k+n} + \frac{1}{\sqrt n}\sum_{l=1}^n\E\left[\U(s) \frac{\partial \HH^{(1)}(s)}{\partial Z_{kl}^{(2)}} \right]_{l+n,k+n}.
\end{align*}
It follows that
\begin{align*}
&\sum_{l=1}^n \E\left[\frac{\partial \U(s)}{\partial Z_{kl}^{(2)}} \HH^{(1)}\right]_{l+n,k+n} =
\sum_{l=1}^n\sum_{p=1}^{2n} \E\left[\frac{\partial \U(s)}{\partial Z_{kl}^{(2)}}\right]_{l+n,p} [\HH^{(1)}]_{p,k+n}\\
&= \frac{i}{\sqrt n} \sum_{l=1}^n\sum_{p=1}^{2n} \E[\U]_{l+n,l+n}*[\HH^{(2)}\J \U]_{k+n, p} [\HH^{(1)}]_{p,k+n}\\
&+ \frac{i}{\sqrt n} \sum_{l=1}^n\sum_{p=1}^{2n} \E[\U ]_{p,l+n}*[\HH^{(2)} \J \U]_{k+n, l+n} [\HH^{(1)}]_{p,k+n}\\
&= \frac{i}{\sqrt n} \sum_{l=1}^n \E[\U]_{l+n,l+n}*[\HH^{(2)}\J \U \HH^{(1)}]_{k+n,k+n}\\
&+ \frac{i}{\sqrt n} \sum_{l=1}^n \E[\HH^{(2)} \J \U]_{k+n, l+n}* [\U \HH^{(1)}]_{l+n,k+n}.
\end{align*}
We may show that the second term has the zero impact. It follows that
\begin{equation}\label{eq: equation for T_k(t)}
\E T_k(s) = \frac{i}{n}\E u_n*\E[\HH^{(2)}\J \U \HH^{(1)}]_{k+n,k+n}(s) + \frac{1}{n}\E u_n(s) + o(1),
\end{equation}
where we have also applied Lemma~\ref{l: variance of trace general case}.
Finally we will have for $q = 1$
\begin{align}\label{eq: Psi_n^q}
&\Psi_n^q=\frac{g_n(\varphi,t)}{n^2}\sum_{j,k=1}^n \left\{\int_{-\infty}^{\infty} s \hat f(s) \left [\frac{1}{n}\E u_n*\E[\U]_{j, j}(s)\right.\right.\\
&\qquad\qquad\left.\left.+ \frac{i}{n}\E u_n *\E[\HH^{(2)}\J \U \HH^{(1)}]_{k+n,k+n}*\E[\U]_{j, j}(s)\right] \, ds \right\}^2 + o(1). \nonumber
\end{align}
Let us introduce further notations and denote
$$
V_{n,k}(s) := \E [\HH^{(2)} \J \U(s) \HH^{(1)}]_{k+n,k+n}.
$$
We may write, applying Lemma~\ref{l: U derivative}
\begin{align*}
&V_{n,k} = \frac{1}{\sqrt n}\sum_{j=1}^n \E Z_{jk}^{(1)} [\U \HH^{(1)}]_{j,k+n} = \sum_{j=1}^n \E \left [ \frac{\partial \U(s)}{\partial Z_{jk}^{(1)}}\right]_{j,k+n}\\
&=\frac{i}{n} \sum_{j=1}^n \sum_{l=1}^{2n} \E[\U \HH^{(1)}]_{j,k+n}*[\U]_{l,j}(s) [\HH^{(1)}]_{l,k+n} \\
&+\frac{i}{n} \sum_{j=1}^n \sum_{l=1}^{2n} \E[\U \HH^{(1)}]_{l,k+n}*[\U]_{j,j}(s) [\HH^{(1)}]_{l,k+n}
\end{align*}
The same arguments as before yield that
\begin{equation}\label{eq: V n k equation}
V_{n,k}(s) = \frac{i}{n}\E u_n(s)*\E T_k(s) + o(1).
\end{equation}
Applying~\eqref{eq: equation for T_k(t)} we get
\begin{equation*}
V_{n,k}(s) = \frac{i}{n^2}\E u_n*\E u_n(s) - \frac{1}{n^2}\E u_n * \E u_n * V_{n,k}(s) + o(1).
\end{equation*}
This means that $\lim_{n \rightarrow \infty} V_{n,k}$ satisfies the following equation
$$
h(s) = i u*u(s) - u*u*h(s)
$$
The same equation holds for $\lim_{n \rightarrow \infty} \frac{1}{n}\sum_{k=1}^n V_{n,k}(s)$.
Since
$$
\lim_{n \rightarrow \infty} \frac{1}{n}\sum_{k=1}^n V_{n,k}(s) = -i v'(s)
$$
that means that
$$
\lim_{n \rightarrow \infty} V_{n,k} = - i v'(s)
$$
Taking the limit with respect to $n_l \rightarrow \infty$ we get in~\eqref{eq: Psi_n^q}
\begin{align}\label{eq: limit Psi_n^q}
\lim_{n_l \rightarrow \infty} \Psi_n^q = g(\varphi,t)\left\{\int_{-\infty}^{\infty} s \hat f(s)  [ v*v(s)+ v*v*v'(s)] \, ds \right\}^2
\end{align}
Let us consider the following integral
$$
\int_{-\infty}^\infty s \hat f(s) [v*v(s) + v*v*v'(s)] \, ds
$$
First we investigate the Fourier transform of $v*v(s) + v*v*v'(s)$. It is given by
$$
i s^2(z) - i (1 + z s(z))s^2(z) = - i z s^3(z)
$$
By Proposition~\ref{stat: Laplace transform} it follows that
\begin{equation}\label{eq: inverse Fourier transform 1 general case}
v*v(t) + v*v*v'(s) = \frac{1}{2\pi} \int_L e^{i s z}z s^3(z) \, dz.
\end{equation}
Since $1 + z s(z) = z s^3(z)$, the right hand side of~\eqref{eq: inverse Fourier transform 1 general case} may be rewitten as
$$
\frac{1}{2\pi} \int_L e^{i s z}(1 + z s(z)) \, dz.
$$
Similarly to the proof of Lemma~\ref{l: FT of K} we get
$$
\frac{1}{2\pi} \int_L e^{i s z}(1 + z s(z)) \, dz = i\int_{-a}^a e^{i s \lambda} \lambda p(\lambda) \, d\lambda.
$$
Integrating by parts we will have
$$
i\int_{-a}^a e^{i s \lambda} \lambda p(\lambda) \, d\lambda = -\frac{1}{s} \int_{-a}^a e^{i s \lambda}[p(\lambda) + \lambda p'(\lambda)] \, d\lambda.
$$
Finally we conclude that
$$
\int_{-\infty}^\infty t \hat f(t)[v*v(s) + v*v*v'(s)] \, ds = \int_{-a}^a f(\lambda)[p(\lambda) + \lambda p'(\lambda)] \, d\lambda.
$$
and finish the proof of~\eqref{eq: limit of Psi n}.
If we show that for all $1 \le j,k \le n$
\begin{equation}\label{eq: the last argument}
\lim_{n \rightarrow \infty} \E \left [ \int_{-\infty}^\infty s \hat f(s) [T_k*[\U]_{jj}(s)- \E T_k * \E [\U]_{jj}(s)] \, ds \right ]^2 e^{itS^0} = 0.
\end{equation}
then from~\eqref{eq:relation between moments 2} and~\eqref{eq: limit of Psi n} we will have
$$
\lim_{n_l \rightarrow \infty}  \frac{it}{3! n_l^{1/2}} \sum_{q=1}^2 \sum_{j,k=1}^{n_l}  \E \hat Z_{jk}^{(q)} (Z_{jk}^{(q)})^3  \E \Phi_{jkq}^{(p)} (0)  =
-\kappa_4 t^2 \sin^3 \varphi \cos \varphi \Psi^2 g(t, \varphi),
$$
where $\Psi$ is given by~\eqref{eq: varphi square}.

To prove~\eqref{eq: the last argument} it is enough to show that
\begin{align}\label{eq: var of T_k}
\Var[T_k(s)] = o(1), \quad k = 1, ... , n; \\
\label{eq: var of U_jj}
\Var([\U]_{jj}) = o(1), \quad j = 1, ... , n.
\end{align}
We may apply Lemma~\ref{l: variance of trace general case} to conclude the proof of Theorem.

\subsection{The remainder term}
To conclude the proof of Theorem~\ref{th:main general case} it remains to estimate the remainder term $T_4$. One may see that $\E \hat Z_{jk}^{(q)} (Z_{jk}^{(q)})^4 \le C \tau \sqrt n \mu_4$.
Let $Z$ be a random variable which has the same distribution as $Z_{11}^{(1)}$. We estimate $\E \sup_Z \Phi_{jkq}^{(3)}(Z)$. Simple calculations yield that
$$
\Phi_{jkq}^{(3)}(Z) = L_{jkq}^1 + ... + L_{jkq}^7,
$$
where
\begin{align*}
&L_{jkq}^1 = \frac{1}{n^2}[\V_{[m-q+2,m]}\J \frac{\partial^4 f'(\hat \V)}{\partial (Z_{jk}^{(q)})^4}\bigg |_{Z_{jk}^{(q)}=Z} \V_{[1, m-q]}]_{j+n,k+n} e^{i t S^0}, \\
&L_{jkq}^2 = \frac{5it}{\sqrt n}[\V_{[m-q+2,m]}\J \frac{\partial^3 f'(\hat \V)}{\partial (Z_{jk}^{(q)})^3}\bigg |_{Z_{jk}^{(q)}=Z} \V_{[1, m-q]}]_{j+n,k+n} \\
&\qquad\qquad\qquad\qquad\times[\V_{[m-q+2,m]} \J f'(\hat \V)\big |_{Z_{jk}^{(q)}=Z} \V_{[1, m-q]}]_{j+n,k+n} e^{i t S^0},\\
&L_{jkq}^3 = \frac{10it}{\sqrt n}[\V_{[m-q+2,m]}\J \frac{\partial^2 f'(\hat \V)}{\partial (Z_{jk}^{(q)})^2}\bigg |_{Z_{jk}^{(q)}=Z} \V_{[1, m-q]}]_{j+n,k+n} \\
&\qquad\qquad\qquad\qquad\times[\V_{[m-q+2,m]} \J \frac{\partial f'(\hat \V)}{\partial Z_{jk}^{(q)}}\big |_{Z_{jk}^{(q)}=Z} \V_{[1, m-q]}]_{j+n,k+n} e^{i t S^0}, \\
&L_{jkq}^4 = -\frac{10t^2}{n}[\V_{[m-q+2,m]}\J \frac{\partial^2 f'(\hat \V)}{\partial (Z_{jk}^{(q)})^2}\bigg |_{Z_{jk}^{(q)}=Z} \V_{[1, m-q]}]_{j+n,k+n} \\
&\qquad\qquad\qquad\qquad\times[\V_{[m-q+2,m]} \J f'(\hat \V)\big |_{Z_{jk}^{(q)}=Z} \V_{[1, m-q]}]_{j+n,k+n}^2 e^{i t S^0},\\
&L_{jkq}^5 = -\frac{15t^2}{n}[\V_{[m-q+2,m]}\J \frac{\partial f'(\hat \V)}{\partial Z_{jk}^{(q)}}\bigg |_{Z_{jk}^{(q)}=Z} \V_{[1, m-q]}]_{j+n,k+n}^2  \\
&\qquad\qquad\qquad\qquad\times[\V_{[m-q+2,m]} \J f'(\hat \V)\big |_{Z_{jk}^{(q)}=Z} \V_{[1, m-q]}]_{j+n,k+n} e^{i t S^0},\\
&L_{jkq}^6 = -\frac{10it^3}{n^{3/2}}[\V_{[m-q+2,m]}\J \frac{\partial f'(\hat \V)}{\partial Z_{jk}^{(q)}}\bigg |_{Z_{jk}^{(q)}=Z} \V_{[1, m-q]}]_{j+n,k+n} \\
&\qquad\qquad\qquad\qquad\times[\V_{[m-q+2,m]} \J f'(\hat \V)\big |_{Z_{jk}^{(q)}=Z} \V_{[1, m-q]}]_{j+n,k+n}^3 e^{i t S^0}, \\
&L_{jkq}^7 = \frac{t^4}{n^2}[\V_{[m-q+2,m]} \J f'(\hat \V)\big |_{Z_{jk}^{(q)}=Z} \V_{[1, m-q]}]_{j+n,k+n}^5 e^{i t S^0}.
\end{align*}
Applying the same arguments as before in subsections~\ref{subs: second derivative} and~\ref{subs: third derivative} we get that
$$
|T_4| \le C \tau.
$$
One may show that it is possible to change $\tau$ by the sequence $\tau_n$, such that $\lim_{n \rightarrow \infty} \tau_n = 0$ and $\lim_{n \rightarrow \infty} \sqrt n \tau_n = \infty$. This fact finishes the proof of Theorem~\ref{universality}.
\end{proof}

\appendix
\section{Fuss-Catalan Distribution}\label{Fuss-Catalan Distribution}
For any $m \in \mathbb N$ let us consider the sequence of numbers
$$
M_k = \frac{1}{m k + 1}\binom{k}{mk+k}, \, k \in \mathbb N \cup \{0\}.
$$
These numbers are called Fuss-Catalan numbers. In~\cite{PensZycz2011} the density function $P_m(x)$ which satisfy
$$
\int_0^{K_m} x^k P_m(x) dx = M_k
$$
were found. Here $K_m: = (m+1)^{m+1}/m^m$. The explicit formula for $P_m(x)$ is given by the following formula
\begin{align*}
\label{eq:FCs}
P_m(x) &=
 \sum_{k=1}^m  \Lambda_{k,m}\; x^{\frac{k}{m+1}-1} \times \\
&\qquad \qquad {}_mF_{m-1}\Bigl( \Bigl[ \Bigl\{1-\frac{1+j}{m} +\frac{k}{m+1} \Bigr\}_{j=1}^m \Bigr] , \; \, \\
& \qquad\qquad\Bigl[ \Bigl\{1+\frac{k-j}{m+1} \Bigr\}_{j=1}^{k-1} ,
      \Bigl\{1+\frac{k-j}{m+1} \Bigr\}_{j=k+1}^{m} \Bigr] ; \;
         \frac{m^m}{(m+1)^{m+1}} x
                   \Bigr) \ .
\end{align*}
where the coefficients $\Lambda_{k,m}$ read for $k=1,2, \dots, m$
\begin{equation*}
\label{eq:FCs2}
\Lambda_{k,m} := m^{-3/2} \sqrt{\frac{m+1}{2\pi}}
 \Bigl(\frac{m^{m/(m+1)}}{m+1}\Bigr)^{k}  \
 \frac{\Bigl[ \prod_{j=1}^{k-1} \Gamma\bigl(\frac{j-k}{m+1}\bigr) \Bigr]
       \Bigl[ \prod_{j=k+1}^{m} \Gamma\bigl(\frac{j-k}{m+1}\bigr) \Bigr] }
  { \prod_{j=1}^{m} \Gamma\bigl( \frac{j+1}{m} - \frac{k}{m+1}\bigr) } \ .
\end{equation*}
For example,
\begin{equation*}
P_1(x) = \frac{\sqrt{1-x/4}}{\pi \sqrt{x}}.
\end{equation*}
and
\begin{equation*}
\!\!
P_2(x) =  \frac{\sqrt[3]{2} \sqrt{3}}{12 \pi} \;
 \frac{\bigl[\sqrt[3]{2} \left(27 + 3\sqrt{81-12x} \right)^{\frac{2}{3}} -
   6\sqrt[3]{x}\bigr] } {x^{\frac{2}{3}}
     \left(27 + 3\sqrt{81-12x} \right)^{\frac{1}{3}}},
\label{pi2}
\end{equation*}
valid for $ x \in [0,27/4]$.


\section{Unitary matrix derivatives}
In this section we collect usefull facts about matrix derivatives and matrix exponent.
Let us consider a function $f(\lambda)$ and denote by
$$
\hat f(t) = \frac{1}{2 \pi}\int_{-\infty}^{\infty} f(\lambda) e^{- i t \lambda} \,d\lambda
$$
its Fourier transform. Function $f(\lambda)$ may be reconstucted from $\hat f(t)$ via inverse Fourier transfrom
$$
f(\lambda) = \int_{-\infty}^{\infty} \hat f(t) e^{i t \lambda} \, dt.
$$
Let $f^{(k)}(\lambda)$ be $k$-th derivative of $f(\lambda)$. Then
\begin{equation}\label{eq: k deri of f}
f^{(k)}(\lambda) = i^k \int_{-\infty}^{\infty} t^k \hat f(t) e^{i t \lambda} \, dt.
\end{equation}
We introduce further notations. Let $\M$ an arbitrary symmetric matrix and $M_{jk}$ be its entries. We denote
\begin{align*}
&D_{jk}: = \partial / \partial M_{jk}; \\
&\U(t): = e^{i t \M}, \, U_{jk}(t) = (\U(t))_{jk}.
\end{align*}
Then we may write
\begin{equation}\label{eq: f(M) representation}
f(\M) = \int_{-\infty}^{\infty} \hat f(t) \U(t) \, dt.
\end{equation}
We will use the following formula
\begin{equation}\label{eq: Duhamel's formula}
e^{(\M_1 + \M_2)t} = e^{\M_1 t} + \int_0^t e^{\M_1(t-s)} \, \M_2 \, e^{(\M_1 + \M_2)s} \, ds,
\end{equation}
valid for arbitrary matrices $\M_1, \M_2$ and $t \in \R$.

In what follows we shall use matrix notation~\eqref{matrix H} and~\eqref{matrix V}.
Consider the singular value decomposition of the matrix $\Y$ of dimension $n \times n$. Let $\LL$ and $\HH$ be unitary matrices of dimension $n \times n$.
Let $\Lam$ be a diagonal matrix whose entries are the singular values of the matrix $\Y$. We have the following representation
$$
\Y = \LL \Lam \HH^{*}
$$
We introduce the following matrix
\begin{align*}
\Z^{*} =\frac{1}{\sqrt 2} \left [\begin{matrix} \LL^{*} & \HH^{*} \\
\LL^{*} &  -\HH^{*} \end{matrix} \right ]
\end{align*}
It is straightforward to check that
\begin{align*}
\Z^{*} \V \Z =\left [\begin{matrix}\Lam & \OO \\
\OO &  -\Lam \end{matrix} \right ]
\end{align*}
and
\begin{align*}
\Z^{*} \U(s) \Z = \Z^{*} \left [\begin{matrix}\Lam(s) & \OO \\
\OO &  \Lam(-s) \end{matrix} \right ] \Z,
\end{align*}
where $\Lam(s)$ is a diagonal matrix such that $[\Lam(s)]_{jj} = e^{[i s \Lambda_{jj}]}, j = 1, .. , n$. A simple calculation yields that
\begin{align} \label{eq: U matrix representation}
\U(s) = \left [\begin{matrix}\U_1 & \U_2 \\
\U_3 &  \U_4 \end{matrix} \right ] = \left [\begin{matrix} \LL(\Lam(s) + \Lam(-s)) \LL^{*} & \LL (\Lam(s) - \Lam(-s)) \HH^{*} \\
\HH (\Lam(s) - \Lam(-s)) \LL^{*} &  \HH (\Lam(s) + \Lam(-s)) \HH^{*} \end{matrix} \right ],
\end{align}

We also denote by $\M^{(j,k)}$ a matrix $\M$ with $M_{jk}$ removed. To calculate derivatives of $\U(s)$ we need the following lemma.
\begin{lemma}\label{L: V derivative}
Let $1 \le j,k \le n$ and $ m \geq 2$. Then
\begin{align}\label{eq: V derivative 1}
\left [\frac{\partial \hat \V}{\partial Y_{jk}^{(1)}} \right ]_{a,b} &= \frac{1}{\sqrt n} [\V_{[1,m-1]}]_{a, k+n}\mathbb I(b = j) + \frac{1}{\sqrt n} [\V_{[2,m]} \J]_{k, b} \mathbb I(a = j)
\end{align}
for any $1 \le a, b \le 2n$.
\end{lemma}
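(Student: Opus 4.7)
The plan is a straightforward application of the product rule, keeping track of where $Y_{jk}^{(1)}$ enters the matrix $\hat{\V} = \HH^{(1)}\HH^{(2)}\cdots\HH^{(m)}\J$.

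First, I would inspect the block structure of each $\HH^{(\nu)}$ to identify every occurrence of $Y_{jk}^{(1)}$. By the definition
$$
\HH^{(\nu)} = \begin{pmatrix} \Y^{(\nu)} & \OO \\ \OO & (\Y^{(m-\nu+1)})^T \end{pmatrix},
$$
the variable $Y_{jk}^{(1)}$ appears exactly twice (for $m \geq 2$): once in the top-left block of $\HH^{(1)}$ and once in the bottom-right block of $\HH^{(m)}$ (transposed, as an entry of $(\Y^{(1)})^T$). In particular, $\HH^{(2)},\ldots,\HH^{(m-1)}$ and $\J$ are independent of $Y_{jk}^{(1)}$. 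This reduces the differentiation to two contributions via the Leibniz rule:
$$
\frac{\partial \hat{\V}}{\partial Y_{jk}^{(1)}} = \frac{\partial \HH^{(1)}}{\partial Y_{jk}^{(1)}}\,\V_{[2,m]}\,\J \;+\; \V_{[1,m-1]}\,\frac{\partial \HH^{(m)}}{\partial Y_{jk}^{(1)}}\,\J.
$$

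Next I would evaluate the two entrywise derivatives. Explicitly,
$$
\left[\frac{\partial \HH^{(1)}}{\partial Y_{jk}^{(1)}}\right]_{ac} = \tfrac{1}{\sqrt{n}}\,\mathbb I(a=j)\,\mathbb I(c=k), \qquad \left[\frac{\partial \HH^{(m)}}{\partial Y_{jk}^{(1)}}\right]_{cd} = \tfrac{1}{\sqrt{n}}\,\mathbb I(c=k+n)\,\mathbb I(d=j+n),
$$
the second coming from the transpose in the lower block. Taking the $(a,b)$ entry of the first contribution collapses the sum to $\tfrac{1}{\sqrt n}\,\mathbb I(a=j)\,[\V_{[2,m]}\J]_{k,b}$, which is the second term in the claimed formula. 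Taking the $(a,b)$ entry of the second contribution gives $\tfrac{1}{\sqrt n}\,[\V_{[1,m-1]}]_{a,k+n}\,[\J]_{j+n,b}$.

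Finally I would simplify the factor $[\J]_{j+n,b}$ using the off-diagonal identity structure of $\J$: $[\J]_{j+n,b} = \mathbb I(b=j)$ for $1 \le b \le 2n$ (nonzero only when $b \le n$, where $\J$ has the identity block). This produces the first term $\tfrac{1}{\sqrt n}\,[\V_{[1,m-1]}]_{a,k+n}\,\mathbb I(b=j)$ of the stated formula, completing the identification. There is no real obstacle here; the only bookkeeping subtlety is making sure that the transpose in $\HH^{(m)}$ swaps the roles of $j$ and $k$ in the indicator functions, and that the multiplication by $\J$ on the right shifts the column index from $j+n$ to $j$.
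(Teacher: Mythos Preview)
Your proof is correct and follows essentially the same approach as the paper. Both arguments identify the two occurrences of $Y_{jk}^{(1)}$ (in $\HH^{(1)}$ and in $\HH^{(m)}$) and apply the Leibniz rule; the paper phrases this via the rank-one decomposition $\HH^{(1)}=(\HH^{(jk)})^{(1)}+\tfrac{Y_{jk}^{(1)}}{\sqrt n}\EE_{j,k}$ and checks that the resulting cross term $\EE_{j,k}\V_{[2,m-1]}\EE_{k+n,j+n}\J$ vanishes, whereas your direct product-rule formulation makes that verification unnecessary.
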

\begin{proof}
We decompose $\hat \V$ in the following way
$$
\hat \V= \left ((\HH^{(jk)})^{(1)} + \frac{Y_{jk}^{(1)}}{\sqrt n} \EE_{j, k} \right )\V_{[2,m-1]} \left ((\HH^{(jk)})^{(m)} + \frac{Y_{jk}^{(1)}}{\sqrt n} \EE_{k+n, j+n} \right ) \J.
$$
It is easy to see
$$
[\V_{[1,m-1]} \EE_{k+n, j+n} \J ]_{a, b} =  [\V_{[1,m-1]}]_{ a , k+n} \mathbb I(b = j),
$$
$$
[\EE_{j, k} \V_{[2,m]}  \J ]_{a, b} = [\V_{[2,m]}  \J ]_{k, b} \mathbb I(a = j)
$$
and
$$
[\EE_{j, k} \V_{[2,m-1]}  \EE_{k+n, j+n}   \J ]_{a, b} = [\V_{[2,m-1]}]_{k, k+n} \mathbb I(a = b = j) = 0.
$$
\end{proof}

We may generalize the last lemma on the case when the derivative is taken with respect to $Y_{jk}^{(q)}, q = 2, ... , m$. We have the following lemma
\begin{lemma}\label{L: V derivative with q geq 2}
Let $1 \le j,k \le n$ and $ m \geq 2$. Then
\begin{align}\label{eq: V derivative 1 with q geq 2}
\left [\frac{\partial \hat \V}{\partial Y_{jk}^{(q)}} \right ]_{a,b} &= \frac{1}{\sqrt n} [\V_{[1,m-q]}]_{a, k+n} [\V_{[m-q+2,m]} \J]_{j+n, b}\\
&+\frac{1}{\sqrt n}[\V_{[1,q-1]}]_{a, j} [\V_{[q+1,m]} \J]_{k, b}. \nonumber
\end{align}
\end{lemma}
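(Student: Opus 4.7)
The plan is to generalize the argument from Lemma~\ref{L: V derivative} by identifying exactly where $Y_{jk}^{(q)}$ sits inside the product $\hat \V = \HH^{(1)}\cdots\HH^{(m)}\J$. From the block form
\begin{equation*}
\HH^{(\nu)} = \begin{pmatrix}\Y^{(\nu)} & \OO \\ \OO & (\Y^{(m-\nu+1)})^T\end{pmatrix},
\end{equation*}
the variable $Y_{jk}^{(q)}$ appears as the $(j,k)$ entry of $\HH^{(q)}$ (inside the upper-left block $\Y^{(q)}$) and as the $(k+n, j+n)$ entry of $\HH^{(m-q+1)}$ (inside the lower-right block $(\Y^{(q)})^T$, via the transpose); assuming $q\neq m-q+1$, these are the only two factors of the product depending on $Y_{jk}^{(q)}$. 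Consequently
\begin{equation*}
\partial_{Y_{jk}^{(q)}}\HH^{(q)} = \tfrac{1}{\sqrt n}\EE_{j,k}, \qquad \partial_{Y_{jk}^{(q)}}\HH^{(m-q+1)} = \tfrac{1}{\sqrt n}\EE_{k+n,j+n},
\end{equation*}
and every other $\HH^{(\nu)}$ is independent of $Y_{jk}^{(q)}$.

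I would then apply the Leibniz product rule to $\hat \V$ and keep only the two non-vanishing terms, corresponding to $\nu=q$ and $\nu=m-q+1$, which gives
\begin{equation*}
\frac{\partial \hat \V}{\partial Y_{jk}^{(q)}} = \tfrac{1}{\sqrt n}\,\V_{[1,q-1]}\EE_{j,k}\V_{[q+1,m]}\J \;+\; \tfrac{1}{\sqrt n}\,\V_{[1,m-q]}\EE_{k+n,j+n}\V_{[m-q+2,m]}\J.
\end{equation*}
Reading off the $(a,b)$ entry of each summand via the elementary identity $[A\EE_{r,s}B]_{a,b} = A_{a,r}B_{s,b}$ produces precisely the two terms on the right-hand side of~\eqref{eq: V derivative 1 with q geq 2}. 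Boundary values $q=2$ and $q=m$ are absorbed by the convention $\V_{[a,b]}=\I$ for $a>b$ given in~\eqref{matrix V}, and the two geometric cases $q<m-q+1$ versus $q>m-q+1$ (which swap the order of the differentiated factors in the product) yield the same final expression.

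The only mildly delicate point is the degenerate case $q = m-q+1$ (possible only for odd $m$), where both occurrences of $Y_{jk}^{(q)}$ sit inside a single factor $\HH^{(q)}$; there $\partial_{Y_{jk}^{(q)}}\HH^{(q)} = n^{-1/2}(\EE_{j,k}+\EE_{k+n,j+n})$, and the single Leibniz contribution $n^{-1/2}\V_{[1,q-1]}(\EE_{j,k}+\EE_{k+n,j+n})\V_{[q+1,m]}\J$ decomposes into the claimed two summands using the identities $m-q = q-1$ and $m-q+2 = q+1$. I expect no substantive obstacle; the proof is essentially a bookkeeping exercise in index ranges, and the main care required is in keeping track of the ordering of $\HH^{(q)}$ and $\HH^{(m-q+1)}$ and in handling the edge-case conventions for $\V_{[a,b]}$ correctly.
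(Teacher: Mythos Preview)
Your proposal is correct and follows essentially the same approach as the paper, which simply says ``the proof is similar'' to Lemma~\ref{L: V derivative}: identify the two occurrences of $Y_{jk}^{(q)}$ in the product $\hat\V$, differentiate via the product rule, and read off entries using $[A\EE_{r,s}B]_{a,b}=A_{a,r}B_{s,b}$. Your treatment of the edge cases (boundary indices via $\V_{[a,b]}=\I$ for $a>b$, and the degenerate $q=m-q+1$) is more explicit than the paper's, which omits these details entirely.
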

\begin{proof}
The proof is similar.
\end{proof}

\begin{lemma}\label{l: U derivative}
Let $1 \le j,k \le n$ and $ m \geq 2$. Then
\begin{align*}
\left [\frac{\partial \U(t)}{\partial Y_{jk}^{(1)}} \right ]_{x,y}  &=  \frac{i}{\sqrt n} [\U \V_{[1,m-1]} ]_{x,k+n}*[\U]_{y, j} (t) \\
&+\frac{i}{\sqrt n} [\U \V_{[1,m-1]}]_{y,k+n}*[\U]_{x, j} (t).
\end{align*}
\end{lemma}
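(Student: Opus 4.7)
The plan is to reduce the statement to an application of Duhamel's formula~\eqref{eq: Duhamel's formula} combined with Lemma~\ref{L: V derivative}, followed by a symmetry argument that rewrites the factor involving $\V_{[2,m]}\J$ in terms of $\V_{[1,m-1]}\J$.

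First, I would view $\U(t)=e^{it\hat\V}$ as a function of the scalar $Y_{jk}^{(1)}$ (all other matrix entries held fixed) and apply Duhamel's formula~\eqref{eq: Duhamel's formula} to obtain
\begin{equation*}
\frac{\partial \U(t)}{\partial Y_{jk}^{(1)}} = i\int_0^t \U(s)\,\frac{\partial\hat\V}{\partial Y_{jk}^{(1)}}\,\U(t-s)\,ds.
\end{equation*}
Taking the $(x,y)$ entry, expanding as a double sum $\sum_{a,b}[\U(s)]_{x,a}\,[\partial\hat\V/\partial Y_{jk}^{(1)}]_{a,b}\,[\U(t-s)]_{b,y}$, and inserting the two--term expression from Lemma~\ref{L: V derivative} collapses the intermediate sums (via the indicators $\mathbb{I}(b=j)$ and $\mathbb{I}(a=j)$) to
\begin{align*}
\left[\frac{\partial\U(t)}{\partial Y_{jk}^{(1)}}\right]_{x,y}
&= \frac{i}{\sqrt n}\int_0^t [\U(s)\V_{[1,m-1]}]_{x,k+n}\,[\U(t-s)]_{j,y}\,ds\\
&\qquad+ \frac{i}{\sqrt n}\int_0^t [\U(s)]_{x,j}\,[\V_{[2,m]}\J\,\U(t-s)]_{k,y}\,ds.
\end{align*}

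The first integral is already in the target form: it equals $[\U\V_{[1,m-1]}]_{x,k+n}\ast[\U]_{j,y}(t)$, and symmetry of $\hat\V$ (hence of $\U$) identifies $[\U]_{j,y}=[\U]_{y,j}$, recovering the first summand in the claim. For the second integral the key ingredient is the identity
\begin{equation*}
(\V_{[2,m]}\J)^T \;=\; \V_{[1,m-1]}\J,
\end{equation*}
which I would verify by using the block identity $\J\HH^{(q)}\J = (\HH^{(m-q+1)})^T$ (an immediate check from the definitions~\eqref{matrix H} of $\HH^{(q)}$ and $\J$) combined with $\J^T=\J$ and $\J^2=\I$. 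Applying this together with symmetry of $\U(t-s)$ gives
\begin{equation*}
[\V_{[2,m]}\J\,\U(t-s)]_{k,y} = [\U(t-s)(\V_{[2,m]}\J)^T]_{y,k} = [\U(t-s)\V_{[1,m-1]}\J]_{y,k} = [\U(t-s)\V_{[1,m-1]}]_{y,k+n},
\end{equation*}
the last equality because right multiplication by $\J$ moves the $k$-th column into the $(k+n)$-th column. Substituting this back and using that the convolution $\ast$ defined in the notations section is commutative produces the second summand $\frac{i}{\sqrt n}[\U\V_{[1,m-1]}]_{y,k+n}\ast[\U]_{x,j}(t)$.

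The only nontrivial ingredient is the transpose identity $(\V_{[2,m]}\J)^T = \V_{[1,m-1]}\J$; everything else is a mechanical application of Duhamel plus the preceding Lemma~\ref{L: V derivative}. I expect this symmetry identity to be the main (and only real) obstacle, since it requires unfolding the block structure of the $\HH^{(q)}$'s, but it is essentially a one-line bookkeeping computation once the definitions are spelled out.
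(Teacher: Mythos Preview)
Your proof is correct and follows essentially the same route as the paper: both apply Duhamel's formula together with Lemma~\ref{L: V derivative} to obtain the two convolution terms, and then use the identity $[\U\V_{[1,m-1]}]_{y,k+n}=[\V_{[2,m]}\J\U]_{k,y}$ (which you justify via the transpose relation $(\V_{[2,m]}\J)^T=\V_{[1,m-1]}\J$, while the paper simply asserts it) to rewrite the second term. The only cosmetic difference is that the paper first factors through the chain rule in the entries $\hat V_{ab}$ before invoking Duhamel, whereas you apply Duhamel directly to the $Y_{jk}^{(1)}$-dependence; the two are equivalent.
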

\begin{proof}
Using the chain rule we will have
$$
\frac{\partial \U(t)}{\partial Y_{jk}^{(1)}} = \sum_{a = 1}^{n} \sum_{b = n+1}^{2n} \frac{\partial \U(t)}{\partial \hat V_{ab}} \frac{\partial \hat \V_{a,b}}{\partial Y_{jk}^{(1)}}
+ \sum_{a = n+1}^{2n}\sum_{b = 1}^{n}  \frac{\partial \U(t)}{\partial \hat V_{ab}} \frac{\partial \hat \V_{a,b}}{\partial Y_{jk}^{(1)}}.
$$
Applying Lemma~\ref{L: V derivative} we will have
\begin{align*}
\frac{\partial \U(t)}{\partial Y_{jk}^{(1)}} &= \frac{1}{\sqrt n} \sum_{b = n+1}^{2n} \frac{\partial \U(t)}{\partial \hat V_{jb}} [\V_{[2,m]}\J]_{ k , b} + \frac{1}{\sqrt n} \sum_{a = n+1}^{2n} \frac{\partial \U(t)}{\partial \hat V_{aj}} [\V_{[1,m-1]}]_{a, k+n}.
\end{align*}
From~\eqref{eq: Duhamel's formula} it follows that
\begin{align*}
\frac{\partial \U(t)}{\partial Y_{jk}^{(1)}} &= \frac{i}{\sqrt n} \sum_{b = n+1}^{2n} U_{xj}*U_{by}(t) [\V_{[2,m]}\J]_{ k , b} \\
&+ \frac{i}{\sqrt n} \sum_{a = n+1}^{2n} U_{x,a}*U_{jy}(t) [\V_{[1,m-1]}]_{a, k+n}.
\end{align*}
Since $[\U \V_{[1,m-1]}]_{ y , k+n} = [\V_{[2,m]} \J \U ]_{k, y}$ we get the statement of Lemma.
\end{proof}

\begin{lemma}\label{l: U derivative q geq 2}
Let $1 \le j,k \le n$ and $ m \geq 2$. Then
\begin{align*}
\left [\frac{\partial \U(t)}{\partial Y_{jk}^{(q)}} \right ]_{x,y}  &=  \frac{i}{\sqrt n} [\U \V_{1,[m-q]}]_{x,k+n}*[\V_{[m-q+2,m]}\J\U]_{j+n,y} (t) \\
&+\frac{i}{\sqrt n} [\U \V_{1,[m-q]} ]_{y,k+n}*[\V_{[m-q+2,m]}\J\U]_{j+n,x} (t).
\end{align*}
\end{lemma}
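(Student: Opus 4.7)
The plan is to follow the same three-step scheme as in the proof of Lemma~\ref{l: U derivative} for the $q=1$ case, but with Lemma~\ref{L: V derivative with q geq 2} replacing Lemma~\ref{L: V derivative}. First I would apply the chain rule
$$
\frac{\partial \U(t)}{\partial Y_{jk}^{(q)}} = \sum_{a,b} \frac{\partial \U(t)}{\partial \hat\V_{ab}} \cdot \frac{\partial \hat\V_{ab}}{\partial Y_{jk}^{(q)}},
$$
where the sum extends over the upper-right and lower-left blocks of $\hat\V$ (the only ones with nonzero entries). Lemma~\ref{L: V derivative with q geq 2} expresses $\partial\hat\V_{ab}/\partial Y_{jk}^{(q)}$ as a sum of two rank-one terms, one coming from the occurrence of $Y_{jk}^{(q)}$ in $\HH^{(q)}$ and the other from its occurrence in $\HH^{(m-q+1)}$.

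Next I would invoke Duhamel's formula~\eqref{eq: Duhamel's formula} applied to $\U(t)=e^{it\hat\V}$, which yields $[\partial\U(t)/\partial\hat\V_{ab}]_{x,y} = i\,U_{x,a}\ast U_{b,y}(t)$. Inserting the two rank-one contributions and collapsing the sums over $a,b$ using ordinary matrix multiplication gives
\begin{align*}
\left[\frac{\partial \U(t)}{\partial Y_{jk}^{(q)}}\right]_{x,y}
&= \frac{i}{\sqrt n}\,[\U\V_{[1,m-q]}]_{x,k+n}\ast[\V_{[m-q+2,m]}\J\U]_{j+n,y}(t)\\
&\quad{}+ \frac{i}{\sqrt n}\,[\U\V_{[1,q-1]}]_{x,j}\ast[\V_{[q+1,m]}\J\U]_{k,y}(t).
\end{align*}
The first line already matches the first summand in the claimed identity.

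To recast the second line into the symmetrized form stated in the lemma, I would use two structural facts. The first is the block-level relation $\J\HH^{(\nu)}\J=(\HH^{(m-\nu+1)})^T$, which is immediate from the definition~\eqref{matrix H}; iterating over a product then yields
$$
\V_{[a,b]}^T\,\J = \J\,\V_{[m-b+1,\,m-a+1]}.
$$
The second is that $\hat\V$ is real symmetric, hence $\U(t)$ is complex symmetric and $U_{p,q}=U_{q,p}$. Combining these with the observation that left-multiplication by $\J$ shifts a row-index by $n$ (i.e.\ $[\J\A]_{j,x}=A_{j+n,x}$ for $j\le n$), one derives
$$
[\U\V_{[1,q-1]}]_{x,j} = [\V_{[m-q+2,m]}\J\U]_{j+n,x}, \qquad [\V_{[q+1,m]}\J\U]_{k,y} = [\U\V_{[1,m-q]}]_{y,k+n}.
$$
Substituting these into the second line converts it into exactly the second summand of the lemma.

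The main obstacle is not analytic but purely combinatorial: verifying the block-transpose identity $\V_{[a,b]}^T\J = \J\V_{[m-b+1,m-a+1]}$ by carefully tracking the effect of conjugation by $\J$ on each factor $\HH^{(\nu)}$, and then correctly matching the $\pm n$ index shifts when $\J$ is pulled past $\V$-factors to send $(x,j)$ to $(j+n,x)$ and $(k,y)$ to $(y,k+n)$. Once these identities are in hand, the remainder is a mechanical replay of the proof of Lemma~\ref{l: U derivative}.
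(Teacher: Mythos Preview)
Your proposal is correct and follows exactly the scheme the paper intends: the paper's own proof reads simply ``The proof is similar,'' referring back to Lemma~\ref{l: U derivative}, and your three steps (chain rule, Lemma~\ref{L: V derivative with q geq 2}, Duhamel, then the block-transpose identity $\V_{[a,b]}^T\J=\J\V_{[m-b+1,m-a+1]}$ to symmetrize the second term) are precisely the natural extension of that argument. In fact you supply more detail than the paper does, since the paper only states the analogue of the final identity in the $q=1$ case (``Since $[\U\V_{[1,m-1]}]_{y,k+n}=[\V_{[2,m]}\J\U]_{k,y}$'') without deriving it.
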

\begin{proof}
The proof is similar.
\end{proof}
The following lemma gives an expression for derivative of $S(\hat \V): = \frac{1}{2} \Tr f(\hat \V)$ with respect to $Y_{jk}^{(1)}$.

\begin{lemma}\label{l: S derivative}
Let $1 \le j,k \le n$ and $ m \geq 2$. Then
\begin{align}\label{eq: S derivative 1}
\frac{\partial S}{\partial Y_{jk}^{(1)}} =  \frac{1}{\sqrt n}[f'(\hat \V) \V_{[1,m-1]}]_{j, k+n}.
\end{align}
\end{lemma}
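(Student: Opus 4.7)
My plan is to start from the Fourier inversion already being used throughout the paper, so that $S = \frac{1}{2}\Tr f(\hat\V) = \frac{1}{2}\int_{-\infty}^{\infty}\hat f(t)\,\Tr\U(t)\,dt$. Differentiating under the integral reduces the task to computing $\sum_{x=1}^{2n}\bigl[\partial\U(t)/\partial Y_{jk}^{(1)}\bigr]_{xx}$ and then using the identity $f'(\hat\V)=i\int t\hat f(t)\U(t)\,dt$ from \eqref{eq: k deri of f} to recognise the answer.

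For the sum, I would apply Lemma~\ref{l: U derivative} with $x=y$. The two terms in that lemma collapse to a single convolution, yielding
\[
\left[\frac{\partial \U(t)}{\partial Y_{jk}^{(1)}}\right]_{xx}=\frac{2i}{\sqrt n}\bigl[\U\V_{[1,m-1]}\bigr]_{x,k+n}*[\U]_{x,j}(t).
\]
Summing over $x$ means evaluating, at each inner time $s$, the sum $\sum_x [\U(s)\V_{[1,m-1]}]_{x,k+n}[\U(t-s)]_{x,j}=\bigl[\V_{[1,m-1]}^{T}\U(s)^{T}\U(t-s)\bigr]_{k+n,j}$. Since $\hat\V$ is symmetric (by design, thanks to the $\J$-multiplication), $\U(s)^{T}=\U(s)$ and hence $\U(s)^{T}\U(t-s)=\U(t)$ by the semigroup property; one more application of symmetry to transpose the whole expression produces $\bigl[\U(t)\V_{[1,m-1]}\bigr]_{j,k+n}$, which is independent of $s$. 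Integrating in $s\in[0,t]$ pulls out a factor of $t$.

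Putting this together, $\sum_x \bigl[\partial\U(t)/\partial Y_{jk}^{(1)}\bigr]_{xx}=\frac{2it}{\sqrt n}\bigl[\U(t)\V_{[1,m-1]}\bigr]_{j,k+n}$. Substituting back,
\[
\frac{\partial S}{\partial Y_{jk}^{(1)}}=\frac{i}{\sqrt n}\int_{-\infty}^{\infty} t\,\hat f(t)\,\bigl[\U(t)\V_{[1,m-1]}\bigr]_{j,k+n}\,dt=\frac{1}{\sqrt n}\bigl[f'(\hat\V)\V_{[1,m-1]}\bigr]_{j,k+n},
\]
where the last equality uses \eqref{eq: k deri of f} for $k=1$.

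The only real subtlety is the symmetry argument that collapses the two convolutions and the matching of indices through $\J$; in particular one must use that $(\HH^{(\nu)})^{T}=\J\HH^{(m-\nu+1)}\J$ so that $\V_{[2,m]}^{T}=\J\V_{[1,m-1]}\J$, which is what ensures $\hat\V$ is symmetric and permits replacing $\U(s)^{T}$ with $\U(s)$. This is the natural place where an error could creep in if one tried to bypass the Fourier representation and instead apply Lemma~\ref{L: V derivative} via the chain-rule identity $\partial_{Y}\Tr f(\hat\V)=\Tr\bigl(f'(\hat\V)\,\partial_{Y}\hat\V\bigr)$; that route also works and gives the two terms $\frac{1}{\sqrt n}[f'(\hat\V)\V_{[1,m-1]}]_{j,k+n}$ and $\frac{1}{\sqrt n}[\V_{[2,m]}\J f'(\hat\V)]_{k,j}$, which coincide precisely because of this symmetry, producing the factor $2$ that cancels the $1/2$ in $S=\tfrac12\Tr f(\hat\V)$.
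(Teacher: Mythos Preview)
Your proof is correct and follows essentially the same route as the paper: start from the Fourier representation of $S$, differentiate under the integral, apply Lemma~\ref{l: U derivative}, and collapse the convolution via the symmetry of $\hat\V$ and the semigroup property of $\U$. The only cosmetic difference is that you merge the two terms of Lemma~\ref{l: U derivative} into a single one (picking up the factor $2$) before summing in $x$, whereas the paper keeps them as $[\U(s)\V_{[1,m-1]}]_{j,k+n}$ and $[\V_{[2,m]}\J\U(s)]_{k,j}$ and then identifies them at the end using the same symmetry relation you spell out, namely $(\HH^{(\nu)})^{T}=\J\HH^{(m-\nu+1)}\J$.
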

\begin{proof}
It is easy to see that
$$
\frac{\partial S}{\partial Y_{jk}^{(1)}} = \frac{1}{2} \int_{-\infty}^{\infty} \hat f(u) \Tr \frac{\partial \U(u)}{\partial Y_{jk}^{(1)}}\, du.
$$
Applying Lemma~\ref{l: U derivative} we get
\begin{align*}
\frac{\partial S}{\partial Y_{jk}^{(1)}} &=  \frac{i}{ 2\sqrt n} \int_{-\infty}^{\infty} s \hat f(s) [\U(s) \V_{[1,m-1]}]_{j, k+n} \, ds \\
& + \frac{i}{2\sqrt n} \int_{-\infty}^{\infty} s \hat f(s) [\V_{[2,m]} \J \U(s)]_{k, j} \, ds
\end{align*}
Applying the properties of $\V$ and $\U$ we get~\eqref{eq: S derivative 1}.
\end{proof}

\begin{lemma}\label{l: S derivative q geq 2}
Let $1 \le j,k \le n$ and $ m \geq 2$. Then
\begin{align}\label{eq: S derivative 1 q geq 2}
\frac{\partial S}{\partial Y_{jk}^{(q)}} =  \frac{1}{\sqrt n}[\V_{[m-q+2, m]} \J f'(\hat \V) \V_{[1,m-q]}]_{j+n, k+n}.
\end{align}
\end{lemma}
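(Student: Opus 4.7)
The plan is to mirror the proof of Lemma \ref{l: S derivative} for the case $q=1$, now using the generalized derivative identity of Lemma \ref{l: U derivative q geq 2}. The starting point will be the Fourier representation
$$
S = \tfrac{1}{2} \Tr f(\hat \V) = \tfrac{1}{2} \int_{-\infty}^{\infty} \hat f(u)\, \Tr \U(u)\, du,
$$
which allows me to reduce the derivative of $S$ to the derivative of $\U(u)$ and then integrate against $\hat f(u)$.

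Next I will apply Lemma \ref{l: U derivative q geq 2} to $\partial \U(u)/\partial Y_{jk}^{(q)}$ and take the diagonal. The two summands in that lemma coincide after setting $y=x$, which produces a factor of $2$. Collapsing the remaining sum over the diagonal index $x$ turns the product of entries into the $(j+n,k+n)$-entry of a matrix product:
$$
\sum_{x=1}^{2n} [\V_{[m-q+2,m]} \J \U(s)]_{j+n,x}\, [\U(u-s) \V_{[1,m-q]}]_{x,k+n}
= [\V_{[m-q+2,m]} \J \U(s) \U(u-s) \V_{[1,m-q]}]_{j+n,k+n}.
$$
Using the semigroup property $\U(s)\U(u-s) = \U(u)$, the convolution in $s$ over $[0,u]$ reduces to multiplication by $u$, so that
$$
\Tr \frac{\partial \U(u)}{\partial Y_{jk}^{(q)}} = \frac{2 i u}{\sqrt n}\, [\V_{[m-q+2,m]} \J \U(u) \V_{[1,m-q]}]_{j+n,k+n}.
$$

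Plugging this back into the Fourier integral and recognizing the representation $f'(\hat \V) = i \int_{-\infty}^{\infty} u \hat f(u)\, \U(u)\, du$ that follows from \eqref{eq: k deri of f} with $k=1$ yields
$$
\frac{\partial S}{\partial Y_{jk}^{(q)}} = \frac{1}{\sqrt n}\Big[\V_{[m-q+2,m]} \J f'(\hat \V) \V_{[1,m-q]}\Big]_{j+n,k+n},
$$
which is the claimed identity. The only nontrivial step is purely combinatorial bookkeeping: correctly tracking the block indices $j+n$ and $k+n$ and the two matrix strings $\V_{[1,m-q]}$ and $\V_{[m-q+2,m]} \J$ through Lemma \ref{l: U derivative q geq 2}; no new analytic input beyond the $q=1$ argument is needed, since the semigroup property and the Fourier inversion handle the temporal structure uniformly in $q$.
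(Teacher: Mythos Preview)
Your proof is correct and follows essentially the same approach the paper intends: the paper's own proof of Lemma~\ref{l: S derivative q geq 2} merely reads ``The proof is similar,'' and your write-up is precisely the analogue of the proof of Lemma~\ref{l: S derivative}, using Lemma~\ref{l: U derivative q geq 2} in place of Lemma~\ref{l: U derivative} and the semigroup identity $\U(s)\U(u-s)=\U(u)$ to collapse the convolution into a factor of $u$.
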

\begin{proof}
The proof is similar.
\end{proof}

\section{Auxiliary lemmas}
In this section we prove some auxiliary Lemmas.
%

The following Lemma gives an estimate for variance of
$$
T_n(s,t): = \frac{1}{n}\sum_{j,k=1}^n [\HH^{(2)} \J \U(s)]_{k,j} [\U(t-s)\HH^{(1)}]_{j,k+n}.
$$
\begin{lemma}\label{l: second trace variance}
Under condition of Theorem~\ref{th:main} we have
\begin{equation*}
\Var(T_n(t,s)) \le \frac{C \max(t^2, (t-s)^2)}{n}.
\end{equation*}
\end{lemma}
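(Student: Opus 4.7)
The plan is to apply the Gaussian Poincar\'e inequality directly to $T_n(s,t)$, viewed as a smooth function of the $N=2n^2$ independent standard Gaussian entries $\{Y_{jk}^{(q)}: 1\le j,k\le n,\,q=1,2\}$. Since the standard Gaussian product measure on $\R^{N}$ satisfies~\eqref{eq: PI} with $\sigma^{2}=1$, one immediately obtains
\begin{equation*}
\Var(T_n(s,t))\ \le\ \sum_{q=1}^{2}\sum_{j,k=1}^{n}\E\left|\frac{\partial T_n(s,t)}{\partial Y_{jk}^{(q)}}\right|^{2},
\end{equation*}
so everything reduces to controlling the $L^{2}$-norms of these partial derivatives.

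Next I would differentiate $T_n$ by the chain rule, keeping track of the four factors that depend on $Y_{jk}^{(q)}$, namely $\HH^{(1)}$, $\HH^{(2)}$, $\U(s)$, and $\U(t-s)$. The $\HH$-derivatives produce only single-entry matrices of size $1/\sqrt n$, and for the particular block positions appearing in $T_n$ several of these contributions vanish by the block structure of $\HH^{(q)}$ and the $\J$-swap. The $\U$-derivatives are supplied by Lemma~\ref{l: U derivative} (and its analogue Lemma~\ref{l: U derivative q geq 2} for $q=2$): each produces a factor $1/\sqrt n$ multiplied by a Duhamel convolution over $[0,s]$ or $[0,t-s]$ of entries of $\U$ with entries of $\U\HH^{(q)}$.

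To estimate the resulting terms I would combine the prefactor $1/n$ from the definition of $T_n$ with the $1/\sqrt n$ from each Duhamel derivative, giving an overall prefactor $1/n^{3/2}$. Unitarity of $\U$ bounds every entry $|U_{ab}|\le 1$, and the inner sum $\sum_{j',k'=1}^{n}$ in $T_n$ can be collapsed by Cauchy-Schwarz together with the orthogonality relation $\sum_{a}|U_{xa}|^{2}=1$ and the $L^{2}$-boundedness of single rows/columns of $\HH^{(q)}$. Each time convolution contributes at most a factor $\max(|s|,|t-s|)$, so
\begin{equation*}
\E\left|\frac{\partial T_n(s,t)}{\partial Y_{jk}^{(q)}}\right|^{2}\ =\ O\!\left(\frac{\max(s^{2},(t-s)^{2})}{n^{3}}\right);
\end{equation*}
summing over the $2n^{2}$ Poincar\'e indices then yields the claimed bound $C\max(s^{2},(t-s)^{2})/n\le C\max(t^{2},(t-s)^{2})/n$, since $s\in[0,t]$.

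The main obstacle is the combinatorial bookkeeping: the chain rule for $T_n$ combined with the two-term Duhamel formula for each $\partial\U/\partial Y_{jk}^{(q)}$ produces a substantial number of convolution integrals, and in each of them one must carefully use unitarity of $\U$ and the block structure of $\HH^{(q)}$ to collapse the inner sum $\sum_{j',k'=1}^{n}$ \emph{before} taking squares, so that the apparent factor $n^{2}$ from this sum does not propagate to the final estimate. No conceptually new ideas are required beyond this careful estimation, which is essentially the same as (but somewhat heavier than) the one already carried out for Lemma~\ref{l: trace variance}.
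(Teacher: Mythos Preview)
Your approach via the Gaussian Poincar\'e inequality is valid and indeed different from the paper's. The paper proceeds instead by a \emph{martingale decomposition}: it introduces a filtration $\mathcal F_{q,l}$ obtained by successively revealing the rows of $\Y^{(q)}$, writes $T_n-\E T_n$ as a telescoping sum of martingale differences $(\E_{q,l}-\E_{q,l-1})T_n$, and compares $T_n$ with the same expression in which the $l$-th row and column of $\HH^{(q)}$ are deleted. Duhamel's formula then gives $\U(s)-\U^{(q,l)}(s)$ as a time integral of rank-one perturbations, and the sum over $l$ of the squared differences is bounded by a Hilbert--Schmidt norm $\|\W(\Y^{(2)})^{T}\Y^{(2)}\|_{2}^{2}=O(n)$, yielding the claimed $t^{2}/n$ factor. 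The point of the row-wise (rather than entry-wise) grouping is that the two ``static'' terms $\Theta_{1,l}$ and $\Theta_{4,l}$, coming from the change in the outer factors $\HH^{(2)}$ and $\HH^{(1)}$, vanish exactly by the block structure, so only the Duhamel terms survive and every surviving contribution carries the time factor.

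Your Poincar\'e route is conceptually cleaner---it reuses the same device that underlies the earlier inequality~\eqref{eq:variance EPI}---but note two caveats. First, for $q=2$ the $\HH$-derivatives do \emph{not} all vanish (both $\HH^{(2)}\!\upharpoonright$ upper-left block and $\HH^{(1)}\!\upharpoonright$ lower-right block depend on $\Y^{(2)}$ at the relevant indices), so your argument actually yields $\Var(T_n)\le C(1+\max(s^{2},(t-s)^{2}))/n$. This is harmless for the application (the remainder $r_n(t)$ is only integrated against bounded weights) but is slightly weaker than the stated lemma. Second, the Poincar\'e inequality is specific to the Gaussian setting of Theorem~\ref{th:main}; the paper's martingale argument, by contrast, does not use Gaussianity and is closer in spirit to the variance estimates needed later in the general case (Lemma~\ref{l: variance of trace general case}).
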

\begin{proof}
Let us introduce the following matrices
$$
\HH^{(q,l)} = \HH^{(q)} - \ee_l \ee_l^T \HH^{(q)} - \HH^{(q)} \ee_l \ee_l^T
$$
$$
\widetilde \HH^{(q,l)} = \HH^{(q)} - \ee_{l+n} \ee_{l+n}^T \HH^{(q)} - \HH^{(q)} \ee_{l+n} \ee_{l+n}^T
$$
where $q = 1,2$ and $l = 1, ... , n$.
We define the following filtration
$$
\mathcal{F}_{q,l} = \sigma\{Y_{i_1,i_2}^{(q)}, l < i_1, i_2 \le n, Y_{i_3i_4}^{(2)}, i_3, i_4 = 1, ... ,n \}.
$$
We may rewrite the difference
\begin{align*}
&\E \sum_{j,k=1}^n [\HH^{(2)} \J \U(s)]_{k,j} [\U(t-s) \HH^{(1)}]_{j,k+n} - \sum_{k=1}^n [\HH^{(2)} \J \U(s)]_{k,j} [\U(t-s)\HH^{(1)}]_{j,k+n} \\
&=\sum_{q=1}^2 \sum_{l=1}^n (\E_{q,l} - \E_{q, l-1}),
\end{align*}
where $\E_{q,l}$ is the mathematical expectation with respect to $\mathcal F_{q,l}$. It is easy to see
that $\mathcal F_{1,n} = \mathcal F_{2,0}$ and
\begin{align*}
&\E_{1,l} \sum_{j, k=1}^n [\widetilde \HH^{(2,l)} \J \U^{(1,l)}(s)]_{k,j}[\U^{(1,l)}(t-s) \HH^{(1,l)}]_{j,k+n} \\
&\qquad\qquad\qquad= \E_{1,l-1} \sum_{j,k=1}^n [\widetilde \HH^{(2,l)} \J \U^{(1,l)}(s)]_{k,j} [\U^{(1,l)}(t-s)\HH^{(1,l)}]_{j,k+n} \\
&\E_{2,l} \sum_{j,k=1}^n [\HH^{(2,l)} \J \U^{(2,l)}(s)]_{k,j} [\U^{(2,l)}(t-s)\widetilde \HH^{(1,l)}]_{j,k+n} \\
&\qquad\qquad\qquad= \E_{2,l-1} \sum_{j,k=1}^n [\HH^{(2,l)} \J \U^{(2,l)}(s)]_{k,j}[\U^{(2,l)}(t-s) \widetilde \HH^{(1,l)}]_{j,k+n}
\end{align*}
We consider the case $q = 1$ only. The case $q=2$ is similar.
We may write
\begin{align*}
&\sum_{j,k=1}^n [\HH^{(2)} \J \U(s)]_{k,j} [\U(t-s)\HH^{(1)}]_{j,k+n} \\
&- \sum_{j,k=1}^n [\widetilde \HH^{(2,l)} \J ]_{k,j} [\U^{(1,l)}(t-s)\HH^{(1,l)}]_{j,k+n} =
\Theta_{1,l} + \Theta_{2,l} + \Theta_{3,l}+\Theta_{4,l},
\end{align*}
where
\begin{align*}
&\Theta_{1,l}  = \sum_{j,k=1}^n [(\HH^{(2)} - \widetilde \HH^{(2,l)}) \J \U(s)]_{k,j}[\U(t-s) \HH^{(1)}]_{j,k+n},\\
&\Theta_{2,l}  = \sum_{j,k=1}^n [\widetilde \HH^{(2,l)} \J (\U(s)-\U^{(1,l)}(s))]_{j,k} [\U(t-s) \HH^{(1)}]_{j,k+n},\\
&\Theta_{3,l}  = \sum_{j,k=1}^n [\widetilde \HH^{(2,l)} \J \U^{(1,l)}(s)]_{j,k} [(\U(t-s)-\U^{(1,l)}(t-s)) \HH^{(1)}]_{j,k+n},\\
&\Theta_{4,l}  = \sum_{j,k=1}^n [\widetilde \HH^{(2,l)} \J \U^{(1,l)}(s)] [\U^{(1,l)}(t-s)(\HH^{(1)} - \HH^{(1,l)})]_{k,k+n}.
\end{align*}
It is easy to check that $\Theta_{1,l} = \Theta_{4,l} = 0$.
We consider the term $\Theta_{2,l}$. The term $\Theta_{3,l}$ is similar.
Applying~\eqref{eq: Duhamel's formula} we get
$$
\Theta_{2,l} = I_{1,l} + I_{2,l},
$$
where
\begin{align*}
&I_{1,l} = \int_0^s \sum_{j,k=1}^n [\widetilde \HH^{(2,l)} \J \U^{(1,l)}(s_1)\ee_l \ee_{l}^T \V \J \U(s-s_1)]_{k,j} [\U(t-s)\HH^{(1)}]_{j,k+n} \, ds_1 ,\\
&I_{2,l} = \int_0^s \sum_{j,k=1}^n [\widetilde \HH^{(2,l)} \J \U^{(1,l)}(s_1)\V \J \ee_l \ee_{l}^T \U(s-s_1)]_{k,j} [\U(t-s)\HH^{(1)}]_{j,k+n} \, ds_1 ,\\
\end{align*}
Doing simple calculations we get
$$
I_{1,l} = \int_0^s [\W \U_3(s-s_1) \U_2(t-s)(\Y^{(2)})^T \Y^{(2)} \U_3^{(1,l)}(s_1)]_{ll} \, ds_1,
$$
It is easy to derive the following estimate
$$
\sum_{l=1}^n \E I_{1,l}^2 \le C s^2 \E||\W (\Y^{(2)})^T \Y^{(2)}||_2^2 \le C s^2 n.
$$
The same is true for $\sum_{l=1}^n \E I_{2,l}^2$. This fact finishes the proof of Lemma.
\end{proof}
The following lemma gives an estimate for the variance of $\frac{1}{n} u_n(t)$, $\V_{n,j}(t)$ and
$T_j(t):=[\HH^{(2)} \J \U(t) \HH^{(1)}]_{j,j+n}$ for all $j = 1, ... , n$.
\begin{lemma}\label{l: variance of trace general case}
Under condition of Theorem~\ref{th:main general case} we have
\begin{align}\label{eq: var 1 general case}
&\Var\left[\frac{1}{n}u_n(t)\right] \le \frac{C}{n}, \\
\label{eq: var 2 general case}
&\Var[V_{n,j}(t)] = o(1) \quad j = 1, ... , n; \\
\label{eq: var 3 general case}
&\Var[T_j(t)]  = o(1) \quad  j = 1, ... , n.
\end{align}
\end{lemma}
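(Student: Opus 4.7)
All three bounds are proved by the martingale difference method, following the template of Lemma~\ref{l: second trace variance}. Let $\mathcal F_{q,l}$ denote the $\sigma$-algebra generated by all entries of $\Y^{(2)}$ together with the last $n-l$ rows and columns of $\Y^{(q)}$, and let $\E_{q,l}$ be the corresponding conditional expectation. For any random variable $\xi$ of interest, write $\xi - \E\xi = \sum_{q=1}^2\sum_{l=1}^n (\E_{q,l} - \E_{q,l-1})\xi$, so that by orthogonality of martingale differences
\begin{equation*}
\Var \xi \;=\; \sum_{q,l} \E\bigl|(\E_{q,l} - \E_{q,l-1})\xi\bigr|^2.
\end{equation*}
Using the matrices $\HH^{(q,l)}$, $\widetilde\HH^{(q,l)}$ and the truncated exponential $\U^{(q,l)}(t)$ introduced in that lemma, the standard swap-trick reduces each martingale increment to evaluating the difference $\xi - \xi^{(q,l)}$ between the original quantity and its analogue built from the truncated matrices.

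\textbf{First bound.} For $\xi = u_n(t) = \tfrac12\Tr\U(t)$, the Hermitian perturbation $\hat\V - \hat\V^{(q,l)}$ has rank at most $4$ (it is the product of the rank-$2$ matrix $\HH^{(q)} - \widetilde\HH^{(q,l)}$ with $\HH^{(3-q)}\J$). The Cauchy interlacing inequality together with $\|e^{it\,\cdot}\|_\infty = 1$ yields the deterministic bound $|u_n(t) - u_n^{(q,l)}(t)| \le 8$, and since $u_n^{(q,l)}$ is $\mathcal F_{q,l-1}$-measurable, $|(\E_{q,l} - \E_{q,l-1})u_n(t)| \le 16$. Summing the $2n$ squared martingale differences gives $\Var u_n(t) \le Cn$, which after dividing by $n^2$ proves \eqref{eq: var 1 general case}.

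\textbf{Second and third bounds.} The quantities $T_j(t) = [\HH^{(2)}\J\U(t)\HH^{(1)}]_{j,j+n}$ and $V_{n,j}(t)$ are bilinear forms $\mathbf a_j^{*}\U(t)\mathbf b_j$ in which $\mathbf a_j,\mathbf b_j$ are built from single rows of $\HH^{(1)}$ and $\HH^{(2)}$. Under the truncation $|X_{jk}^{(q)}| \le \tau\sqrt n$ the vectors $\mathbf a_j,\mathbf b_j$ satisfy $\|\mathbf a_j\|, \|\mathbf b_j\| \le C$ with overwhelming probability. Expanding $T_j(t) - T_j^{(q,l)}(t)$ by Duhamel's formula and the rank-$2$ decomposition of $\HH^{(q)} - \widetilde\HH^{(q,l)}$, the martingale increment is a finite sum of terms of the type $\tfrac{1}{\sqrt n}\int_0^t \langle \mathbf c_{q,l},\, \U^{(q,l)}(s)\mathbf d_{q,l}\rangle\,ds$, where the $n^{-1/2}$ arises from the normalisation of the entries of $\HH^{(q)}$ and $\mathbf c_{q,l},\mathbf d_{q,l}$ are (up to bounded constants) unit vectors built from the $l$-th row. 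Unitarity of $\U^{(q,l)}(s)$ combined with Cauchy--Schwarz then gives $|(\E_{q,l} - \E_{q,l-1})T_j(t)| \le C(1+|t|)/\sqrt n$, so that $\Var T_j(t) \le C(1+t^2)/n = o(1)$. The identical argument, with the column index $j$ shifted to $j+n$, handles $V_{n,j}(t)$, which gives \eqref{eq: var 2 general case} and \eqref{eq: var 3 general case}.

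\textbf{Principal difficulty.} The delicate point is the bound on individual entries, not the trace: a na\"\i ve application of the rank argument used for $u_n(t)$ would yield only $\Var T_j(t) = O(1)$. One must carefully exploit the unitarity of $\U(t)$ together with the $n^{-1/2}$ normalisation of the entries of $\HH^{(q)}$ to extract an additional factor $n^{-1/2}$ per martingale increment. The operator-norm control $\|\HH^{(q)}\|\le C$ under the fourth-moment truncation (already implicit in the general-case argument of Section~\ref{section: general case}) is required here to ensure that the vectors $\mathbf a_j,\mathbf b_j,\mathbf c_{q,l},\mathbf d_{q,l}$ appearing in the Duhamel expansion remain of bounded $\ell^2$-norm.
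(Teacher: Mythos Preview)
Your treatment of \eqref{eq: var 1 general case} is essentially fine and matches the paper's strategy (martingale differences, as in Lemma~\ref{l: second trace variance}); the only quibble is that interlacing for $f=e^{it\cdot}$ produces a bound proportional to $|t|$ times the spectral radius of $\hat\V$, not a universal constant $8$, but this does not affect the conclusion.

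For \eqref{eq: var 2 general case} and \eqref{eq: var 3 general case}, however, the argument breaks down in two places. First, the arithmetic is wrong: even granting $|(\E_{q,l}-\E_{q,l-1})T_j(t)|\le C(1+|t|)/\sqrt n$ for each of the $2n$ increments, summing the squares gives $\Var T_j(t)\le 2n\cdot C^2(1+t^2)/n=2C^2(1+t^2)=O(1)$, not $O(1/n)$. Second, and more fundamentally, the $O(n^{-1/2})$ bound per increment is not delivered by the mechanism you describe. After Duhamel and the rank-one decomposition of $\hat\V-\hat\V^{(q,l)}$, a typical contribution is $\int_0^t [\HH^{(2)}\J\U^{(q,l)}(s)]_{j,l}\,[\mathbf v_l^{T}\U(t-s)\HH^{(1)}]_{j+n}\,ds$; unitarity and Cauchy--Schwarz bound each factor by the $\ell^2$-norm of a row of $\HH^{(q)}\J$, which is $O(1)$, not $O(n^{-1/2})$. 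The $n^{-1/2}$ normalisation of individual entries does not make an inner product with a \emph{generic} unit vector (a column of $\U$) small. Extracting the missing factor would require an entrywise delocalisation estimate for $\U(t)$, which is far beyond what is available here. This is exactly the ``principal difficulty'' you flag, and it is not resolved by what you wrote.

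The paper therefore abandons the martingale route for the entrywise quantities and proceeds differently. It studies the covariance $K_{j,n}(t_1,t_2)=\E T_j(t_1)T_j^0(t_2)$ directly: expanding $T_j(t_1)$ in the entries $Y_{jk}^{(2)}$ via the same Taylor/integration-by-parts device used throughout Section~\ref{section: general case}, and applying Lemma~\ref{l: U derivative}, one obtains after discarding $o(1)$ terms a closed Volterra-type equation
\[
K_j(t_1,t_2)=-\int_0^{t_1} v^{*2}(t_1-s)\,K_j(s,t_2)\,ds
\]
for any subsequential limit. Its unique solution is $K_j\equiv 0$, whence $\Var T_j(t)=K_{j,n}(t,t)=o(1)$; the same scheme handles $V_{n,j}$. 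This self-consistent-equation approach is the standard substitute for martingale bounds when one needs control of individual matrix entries rather than traces.
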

\begin{proof}
The proof of the first statement~\eqref{eq: var 1 general case} may be realized similarly to the proof of Lemma~\ref{l: second trace variance}. One may also use the result for the matrix resolvent and the Stieltjes transform. We present the proof of~\eqref{eq: var 3 general case} only. The proof of~\eqref{eq: var 2 general case} is similar. Let us denote
$$
K_{j,n}(t_1, t_2) = \E [T_j(t_1)(T_j(t_2) - \E T_j(t_2))] = \E T_j(t_1) T_j^0(t_2),
$$
where $T_j^0(t) : = T_j(t) - \E T_j(t)$. We have
$$
K_{j,n}(t_1, t_2) = \frac{1}{\sqrt n} \sum_{k=1}^n Y_{jk}^{(2)} [\U(t_1) \HH^{(2)}]_{k+n,j+n} T_j^0(t_2)
$$
By Taylor's formula
\begin{align*}
&K_{j,n}(t_1, t_2) = \frac{1}{\sqrt n} \sum_{k=1}^n \E \left[\frac{\partial \U(t_1)}{\partial Y_{jk}^{(2)}} \HH^{(1)}\right]_{k+n,j+n} T_j^0(t_2) \\
&+\frac{1}{\sqrt n} \sum_{k=1}^n\E \left[\U(t_1)\frac{\partial \HH^{(1)}}{\partial Y_{jk}^{(2)}} \right]_{k+n,j+n} T_j^0(t_2)\\
&+\frac{1}{\sqrt n} \sum_{k=1}^n \E[ \U(t_1)\HH^{(1)}]_{k+n,j+n} \left [\frac{\partial \HH^{(2)}}{\partial Y_{jk}^{(2)}} \J \U(t_2) \HH^{(1)} \right ]_{j,j+n} \\
&+ \frac{1}{\sqrt n} \sum_{k=1}^n \E[ \U(t_1)\HH^{(1)}]_{k+n,j+n} \left [\HH^{(2)}\J\frac{\partial \U(t_2)}{\partial Y_{jk}^{(2)}} \HH^{(1)} \right ]_{j,j+n}\\
&+\frac{1}{\sqrt n} \sum_{k=1}^n \E[ \U(t_1)\HH^{(1)}]_{k+n,j+n} \left [\HH^{(2)}\J \U(t_2)\frac{\partial \HH^{(1)}}{\partial Y_{jk}^{(2)}}\right ]_{j,j+n} \\
&+R,
\end{align*}
where $R$ is a remainder term. It is straightforward to check $R$ has the order $o(1)$. By Lemma~\ref{l: U derivative} we get
\begin{align*}
&K_{j,n}(t_1, t_2) = \frac{i}{n} \E u_n* [\HH^{(2)} \J \U(t_1) \HH^{(1)} ]_{j+n, j+n} T_j^0(t_2) \\
&+\frac{i}{n} \sum_{k=1}^n \E[\U \HH^{(1)}]_{k+n, j+n}*[\U(t_1) \HH^{(1)}]_{k+n, j+n}  T_j^0(t_2) \\
&+\frac{1}{n} \E u_n(t_1) T_j^0(t_2) \\
&+\frac{2}{n} \sum_{k=1}^n \E[ \U(t_1)\HH^{(1)}]_{k+n,j+n} [\U(t_2) \HH^{(1)} ]_{k+n,j+n} \\
&+\frac{2i}{n} \sum_{k=1}^n \E [ \U(t_1)\HH^{(1)}]_{k+n,j+n} [\HH^{(2)}\J \U]_{j, k+n}*[\HH^{(2)} \J \U(t_2) \HH^{(1)}]_{j+n,j+n}\\
&+R.
\end{align*}
Similarly to the previous estimates it is not very difficult to check that all term except the first one have the order $o(1)$. Let us consider the first term
\begin{align*}
&\frac{i}{n} \E u_n* [\HH^{(2)} \J \U(t_1) \HH^{(1)} ]_{j+n, j+n} T_j^0(t_2) \\
&\qquad\qquad\qquad= \frac{i}{n}\E u_n* \E[\HH^{(2)} \J \U(t_1) \HH^{(1)} ]_{j+n, j+n} T_j^0(t_2) + o(1).
\end{align*}
From~\eqref{eq: V n k equation} we have
$$
V_{n,k}(s) = \frac{i}{n}\E u_n(s)*\E T_k(s) + o(1).
$$
We may conclude that
\begin{align*}
&\frac{i}{n} \sum_{k=1}^n \E[\U]_{k+n, k+n}* \E[\HH^{(2)} \J \U(t_1) \HH^{(1)} ]_{j+n, j+n} T_j^0(t_2) \\
&\qquad\qquad\qquad\qquad= -\frac{1}{n^2}(\E u_n)^{*2}*\E T_j(t_1) T_j^0(t_2) + o(1).
\end{align*}
Taking the limit with respect to $n_l \rightarrow \infty$ we get that $K_j: = \lim_{n_l \rightarrow \infty} K_{j,n_l}$ satisfies the following equation
$$
K_j(t_1,t_2) = - \int_0^{t_1} v^{*2}(t_1 - s) K_j(s, t_2) \, ds.
$$
Since $K_j(t_1, t_2) = 0 $ is a unique solution of the last equation this means that
$$
K_{j,n}(t_1, t_2) = o(1).
$$
One may take $t_2 = t_1$ and finish the proof of Lemma.
\end{proof}

\section{Laplace transform} \label{ap: laplace transform}
In this section we recall several results from the theory of Laplace transform. We will follow~\cite{LytPastur2009}[Proposition~2.1].
\begin{statement}\label{stat: Laplace transform}
Let $f: \R_{+} \rightarrow \C$ be locally Lipshitzian and such that for some $\delta > 0$
$$
\sup_{t \geq 0} e^{-\delta t} |f(t)| < \infty
$$
and let $\tilde f: \{z \in C: \imag z < -\delta\} \rightarrow \C$ be its generalized Fourier transform
$$
\tilde f(z) = \frac{1}{i} \int_0^\infty e^{-izt} f(t) \, dt.
$$
The inversion formula is given by
$$
f(t) = \frac{i}{2\pi} \int_L e^{izt} \tilde f(z) \, dz, \quad t\geq 0,
$$
where $L = (-\infty - i \varepsilon, \infty - i \varepsilon ), \varepsilon > \delta$, and the principal value of the integral at infinity is used. Denote the correspondence between functions and
their generalized Fourier transforms as $f \leftrightarrow \tilde f$. Then we have
\begin{align*}
&f'(t)  \leftrightarrow i(f(+0) + z \tilde f(z));\\
&\int_0^t f(s) ds  \leftrightarrow (iz)^{-1} \tilde f(z);\\
&f*g(t)  \leftrightarrow i \tilde f(z) \tilde g(z).
\end{align*}
\end{statement}

We calculate the Fourier transforms of some functions.

\begin{lemma}\label{l: FT of K}
Let $s(z)$ be the Stieltjes transform of $p(x)$ which is a symmetrization of Fuss-Catalan density $P_2(x)$, see
Appendix~\ref{Fuss-Catalan Distribution}. The inverse Fourier transform of
$$
K(z) = \frac{1/z-2s(z)}{1-3s^2(z)}
$$
is given by
$$
T(t) =  \frac{1}{\pi}\int_{-a}^a \frac{ e^{itx}}{3p_1(x)}\frac{4p_1^4(x) + 11 p_1^2(x) + 4}{4 p_1^2(x) + 3} \, dx,
$$
where $p_1(x) = \pi p(x)$.
\end{lemma}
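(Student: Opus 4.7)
The plan is to express $T(t)$ as a contour integral and collapse the contour onto the branch cut of $K(z)$. The function $K(z)$ is analytic in $\mathbb{C}\setminus[-a,a]$: the only potential singularities off the cut would come from zeros of $1-3s^2(z)$, but using the cubic relation $zs^3(z)=1+zs(z)$ one checks that $s(z)=\pm 1/\sqrt3$ can occur only at $z=\pm\tfrac{3\sqrt3}{2}=\pm a$, i.e.\ at the branch points themselves, where the denominator vanishes only to order $\tfrac12$ and thus produces only an integrable $1/\sqrt{|z\mp a|}$ singularity. Moreover $s(z)\sim-1/z$ at infinity (the symmetric density has zero mean), and a quick expansion gives $K(z)=3/z+O(1/z^3)$, so that $K(z)\to 0$ as $|z|\to\infty$.

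The first step is to apply the inversion formula of Statement~\ref{stat: Laplace transform},
\[
T(t)=\frac{i}{2\pi}\int_L e^{izt}K(z)\,dz,
\]
with $L=(-\infty-i\varepsilon,+\infty-i\varepsilon)$. For $t\ge 0$ I would close $L$ in the upper half-plane: by Jordan's lemma the semicircle contribution vanishes (since $K(z)\to 0$), and since the enclosed region contains the cut $[-a,a]$, Cauchy's theorem contracts the contour onto a loop around the cut. Writing $K_\pm(x)=K(x\pm i0)$, this yields the representation
\[
T(t)=-\frac{i}{2\pi}\int_{-a}^a e^{itx}\bigl[K_+(x)-K_-(x)\bigr]\,dx,
\]
up to an overall sign depending on orientation conventions. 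As a sanity check: taking $K=s$ this reproduces $T(t)=v(t)$, consistent with the identification $\tilde v(z)=s(z)$ used elsewhere in the paper.

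The main task is then to compute the jump $K_+(x)-K_-(x)$ in closed form. Writing $s(x\pm i0)=H(x)\pm ip_1(x)$, direct algebra gives
\[
K_+-K_-=(s_+-s_-)\,\frac{(3/x)(s_++s_-)-2-6s_+s_-}{(1-3s_+^2)(1-3s_-^2)}.
\]
The defining equation $s_\pm^3=1/x+s_\pm$ on the support, after separating real and imaginary parts, yields the two crucial identities $3H^2-p_1^2=1$ and $H(H^2-3p_1^2-1)=1/x$, from which I obtain $H^2+p_1^2=(1+4p_1^2)/3$ and $3H/x=-(1+p_1^2)(2+8p_1^2)/3$. Substituting these into the numerator and denominator collapses everything to a function of $p_1$ alone: the denominator simplifies cleanly to $4p_1^2(4p_1^2+3)$, while the numerator becomes $-\tfrac{8i}{3}p_1(4p_1^4+11p_1^2+4)$, so that
\[
K_+-K_-=-\frac{2i}{3p_1(x)}\cdot\frac{4p_1^4+11p_1^2+4}{4p_1^2+3}.
\]
Inserting this jump into the boundary integral above produces exactly the stated expression for $T(t)$.

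The main obstacle in this plan is the algebraic simplification in the last step: one has to use the cubic equation aggressively and repeatedly to eliminate $H$ and $x$ in favour of $p_1$, and it is easy to make sign mistakes along the way. A secondary subtlety is the behaviour at the branch points $\pm a$, where $p_1\to 0$ creates an apparent $1/p_1$ divergence in the integrand; this is however compensated by $p_1(x)$ vanishing like $\sqrt{|a\mp x|}$ at the edge of the Fuss--Catalan support, so the integral converges, and on the contour side the $1/\sqrt{\cdot}$ singularity of $K$ at $\pm a$ contributes nothing to the deformation argument. Everything else is routine once these ingredients are in place.
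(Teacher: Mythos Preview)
Your approach is essentially the same as the paper's: deform the inversion contour onto the cut $[-a,a]$ and compute the jump of $K$ across it using the cubic relation $zs^3=1+zs$; the paper writes $s=g+if$ and computes $\operatorname{Re}K$ and $\operatorname{Im}K$ separately (showing the real parts cancel), whereas you compute $K_+-K_-$ directly, but the algebra is identical. Your treatment is in fact slightly more careful than the paper's in checking that $1-3s^2$ has no zeros off the cut, that $K(z)\sim 3/z$ at infinity (so Jordan's lemma applies), and that the $1/p_1$ singularity at the edges is integrable.
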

\begin{proof}
Be definition, see Statement~\ref{stat: Laplace transform},
$$
T(t) =\frac{i}{2\pi}\int_{L} e^{itz}K(z)\,dz,
$$
where $L = (-\infty - i \varepsilon, \infty - i \varepsilon )$.
We introduce the following contour $\mathbf K$ (see Figure~\ref{fig:contour}) :
\begin{equation*}
\mathbf K:=\mathbf K_1\cup\cdots\cup\mathbf K_8,
\end{equation*}
where
\begin{align*}
&\mathbf K_1:=\{z=u+iv, |u|\le T, v=-\varepsilon\},
\quad\mathbf K_2:=\{z=u+iv: |z|=T, v \ge 0 \},\\
&\mathbf K_{3,4}:=\{z=u+iv: |u|\le a + \varepsilon/2, v=\pm \varepsilon/2\},\\
&\mathbf K_{5,6}:=\{z=u+iv: u=\pm (a+\varepsilon/2), -\varepsilon/2\le v\le\varepsilon/2\},\\
&\mathbf K_{7,8}:=\{z=u+iv: u=\pm T, -\varepsilon\le v\le 0\}.
\end{align*}

\begin{figure}
\begin{center}
\begin{tikzpicture}
\draw [help lines,->] (-4, 0) -- (4.6,0);
\draw [help lines,->] (0, -2) -- (0, 4);
\draw[thick,black,xshift=2pt,
decoration={ markings,  
      mark=at position 0.2 with {\arrow{latex}},
      mark=at position 0.2 with {\arrow{latex}},
      mark=at position 0.8 with {\arrow{latex}},
      mark=at position 0.8 with {\arrow{latex}}},
      postaction={decorate}]
  (3,0)  arc (0:180:3) -- (-3,0);
\draw[thick,black,xshift=2pt,
decoration={ markings,  
      mark=at position 0.2 with {\arrow{latex}},
      mark=at position 0.2 with {\arrow{latex}},
      mark=at position 0.8 with {\arrow{latex}},
      mark=at position 0.8 with {\arrow{latex}}},
      postaction={decorate}]
  (-2,-0.5) -- (-2,0.5) -- (2,0.5)--(2,-0.5) -- (-2, -0.5) ;
\draw[thick,black,xshift=2pt,
decoration={ markings,  
      mark=at position 0.2 with {\arrow{latex}},
      mark=at position 0.2 with {\arrow{latex}},
      mark=at position 0.8 with {\arrow{latex}},
      mark=at position 0.8 with {\arrow{latex}}},
      postaction={decorate}]
  (-3,0) -- (-3,-1) -- (3,-1)--(3,0) ;
\node at (4.5,-0.2){$x$};
\node at (4.5,-0.2){$x$};
\node at (-0.24,3.8) {$iy$};
\node at (-0.6,0.8) {$K_3$};
\node at (-0.6,-0.8) {$K_4$};
\node at (-1.8,2.8) {$K_2$};
\node at (2.4,0.25) {$K_6$};
\node at (-2.3,0.25) {$K_5$};
\node at (1.555, -1.4) {$K_1$};
\node at (3.4, -0.5) {$K_7$};
\node at (-3.3, -0.5) {$K_8$};
\end{tikzpicture}
\caption{Contour of integration}
\label{fig:contour}
\end{center}
\end{figure}
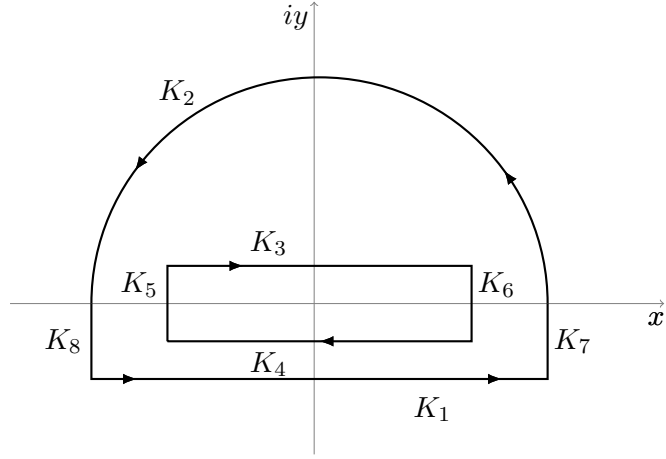

We may write
\begin{equation*}
T(t)=\lim_{T\to\infty}\frac{i}{2 \pi} \int_{\mathbf K_1}e^{itz}K(z)\,dz
\end{equation*}
and
\begin{equation*}
\int_{\mathbf K} e^{itz} K(z)\, dz=0.
\end{equation*}
Furthermore, we note
\begin{equation*}
\lim_{T\rightarrow\infty}\int_{\mathbf K_2 \cup \mathbf K_7 \cup \mathbf K_8} e^{itz} K(z)\, dz = 0.
\end{equation*}
We compute the integrals
\begin{equation*}
\mathcal K_1:=\left (\int_{\mathbf K_3}-\int_{\mathbf K_4} \right )e^{itz} K(z) \,dz.
\end{equation*}
Let $s(z)=if(z)+g(z)$, for $z=u+iv$ with $v>0$. Note that by definition
\begin{equation*}
\imag s(z)=\begin{cases}
f(z),\text{ if } \imag z>0,\\
-f(z),\text{ if } \imag z<0.
\end{cases}
\end{equation*}
Let us calculate $K(z)$ for $z \in K_3$. Applying
$$
1 + z s(z) = z s^3(z)
$$
we get
\begin{align*}
K(z) &= \frac{s(z)(s^2(z) - 3)}{1 - 3 s^2(z)} = \frac{1}{3}\frac{(g + if)(f^2 + 1 - 3 f^2 + 6ifg - 9)}{2 p (f - 3 i g)}\\
& = \frac{1}{6f}\frac{(g+if)(6ifg - 2f^2 - 8)}{f - 3 i g} =  \frac{1}{6f}\frac{(g+if)(6ifg - 2f^2 - 8)(f+3ig)}{|f - 3 i g|^2}.
\end{align*}
The enumerator is equal to
\begin{align*}
&(g+if)(6ifg - 2f^2 - 8)(f+3ig) = (6 i f g^2 - 2 f^2 g - 8 g - 6 f^2 g - 2 i f^3 - 8 if)(f+3ig)\\
& = (2 i f + 2i f^3 - 8 f^2 g - 8 g - 2 i f^3 - 8 if)(f+3ig) \\
& = -2(3 i f + 4 f^2 g + 4 g)(f+3 i g).
\end{align*}
The imaginary part of the enumerator is given by
$$
- 6 f^2 -  24 f^2 g^2 - 24 g^2 = - 6 f^2 - 8 f^2 - 8 f^4 - 8 - 8f^2 = -2 (4f^4 + 11 f^2 + 4).
$$
Finally
$$
\imag K(z) = - \frac{1}{3f} \frac{4f^4 + 11 f^2 + 4}{4 f^2 + 3}.
$$
The real part is equal to
$$
\re K(z) = -\frac{1}{3}\frac{g( 5 - 4 f^2)}{4 f^2 + 3}.
$$
It is easy to see that for $z \in K_4$ we will have
$$
\imag K(z) = \frac{1}{3f} \frac{4f^4 + 11 f^2 + 4}{4 f^2 + 3}, \,
\re K(z) = -\frac{1}{3}\frac{g( 5 - 4 f^2)}{4 f^2 + 3}.
$$
Since $\varepsilon$ is an arbitrary number and
$$
\lim_{\varepsilon \rightarrow 0} f(u + i \varepsilon) = \pi p(u)
$$
then integrating $\re K(z)$ in the opposite directions we get zero. Finally
$$
T(t) =  \frac{1}{\pi}\int_{-a}^a \frac{ e^{itx}}{3\pi p(x)}\frac{4(\pi p(x))^4 + 11 (\pi p(x))^2 + 4}{4 (\pi p(x))^2 + 3} \, dx.
$$
\end{proof}

\def\polhk#1{\setbox0=\hbox{#1}{\ooalign{\hidewidth
  \lower1.5ex\hbox{`}\hidewidth\crcr\unhbox0}}}

\end{document}